	\numberwithin{equation}{section}
\def\F{\mathbb F}
\newcommand{\cC}{\mathcal{C}}
\newcommand{\cS}{\mathcal{S}}
\newcommand{\cT}{\mathcal{T}}
\newcommand{\PG}{\mathrm{PG}}
\declaretheorem[style=plain,name=Theorem,numberwithin=section]{theorem}
\declaretheorem[style=plain,name=Proposition,sibling=theorem]{prop}
\declaretheorem[style=plain,name=Lemma,sibling=theorem]{lemma}
\declaretheorem[style=plain,name=Problem,sibling=theorem]{problem}
\declaretheorem[style=plain,name=Corollary,sibling=theorem]{cor}
\declaretheorem[style=definition,name=Definition,sibling=theorem]{defn}
\declaretheorem[style=definition,name=Remark,sibling=theorem]{remark}
\newcommand{\Projectivisation}[1]{\mathbb{P}(#1)}
\newcommand{\V}[1]{\mathbb{V}(#1)}
\newcommand{\cardinality}[1]{\left|#1\right|}
\newcommand{\GrassmannDimSpace}[2]{\mathcal{G}_{#1}(#2)}
\newcommand{\GrassmannSpace}[1]{\GrassmannDimSpace{\bullet}{#1}}
\newcommand{\grassmannDimSpace}[2]{\cardinality{\GrassmannDimSpace{#1}{#2}}}
\newcommand{\qbinom}[3]{\genfrac{[}{]}{0pt}{}{#1}{#2}_{\!#3}}
\newcommand{\field}[1]{{\mathbb{F}_{#1}}}
\newcommand{\leteq}{\coloneqq}
\newcommand{\generated}[1]{\langle #1 \rangle}
\newcommand{\isom}{\cong}
\DeclareMathOperator{\im}{im}
\DeclareMathOperator{\id}{id}
\newcommand{\bigO}[1]{O(#1)}
\newcommand{\setBuilder}[2]{\left\{#1 : #2 \right\}}%
\newcommand{\TuranQNTS}[4]{\mathcal{T}_{#1}(#2, #3, #4)}
\newcommand{\blocking}{\mathcal{B}}
\newcommand{\blockingSets}[3]{\mathcal{F}(#3)}
\newcommand{\density}{\varrho}
\newcommand{\from}{\colon}
    \providecommand*{\cupdot}{%
      \mathbin{%
        \mathpalette\@cupdot{}%
      }%
    }
    \newcommand*{\@cupdot}[2]{%
      \ooalign{%
        $\m@th#1\cup$\cr
        \sbox0{$#1\cup$}%
        \dimen@=\ht0 %
        \sbox0{$\m@th#1\cdot$}%
        \advance\dimen@ by -\ht0 %
        \dimen@=.5\dimen@
        \hidewidth\raise\dimen@\box0\hidewidth
      }%
    }
    \providecommand*{\bigcupdot}{%
      \mathop{%
        \vphantom{\bigcup}%
        \mathpalette\@bigcupdot{}%
      }%
    }
    \newcommand*{\@bigcupdot}[2]{%
      \ooalign{%
        $\m@th#1\bigcup$\cr
        \sbox0{$#1\bigcup$}%
        \dimen@=\ht0 %
        \advance\dimen@ by -\dp0 %
        \sbox0{\scalebox{2}{$\m@th#1\cdot$}}%
        \advance\dimen@ by -\ht0 %
        \dimen@=.5\dimen@
        \hidewidth\raise\dimen@\box0\hidewidth
      }%
    }
\newcommand{\disjointunion}{\cupdot}
\newcommand{\disjointUnion}{\bigcupdot}
\newcommand{\opt}{\mathrm{opt}}
\newcommand{\lex}{\mathrm{lex}}
\newcommand{\embeds}{\hookrightarrow}
\newcommand{\surjects}{\twoheadrightarrow}
\newcommand{\polarity}[1]{\phi_{#1}}
\newcommand{\coefficients}[4]{%
    {#1}\cdot q^{2n-4} + %
    {#2}\cdot  q^{2n-5} + %
    {#3}\cdot q^{2n-6} + %
    {#4}\cdot q^{2n-7} + %
    \bigO{q^{2n-8}}}
\DeclareMathOperator{\Dim}{Dim}
\title{Blocking Planes by Lines in $\PG(n,q)$}
\author{Benedek Kovács\thanks{ELTE Linear Hypergraphs Research Group, Eötvös Loránd University, Budapest, Hungary. Supported by the EKÖP-24 University Excellence Scholarship Program of the Ministry for Culture and Innovation from the source of the National Research, Development and Innovation Fund and the University Excellence Fund of Eötvös Loránd University.
		E-mail: {\tt benoke98@student.elte.hu}}
	\and Zolt\'an L\'or\'ant Nagy\thanks{ELTE Linear Hypergraphs  Research Group,
		E\"otv\"os Lor\'and University, Budapest, Hungary. The author is supported by the Hungarian Research Grant (NKFIH) No. PD  134953. and No. K. 124950 and the University Excellence Fund of Eötvös Loránd University.	E-mail: {\tt nagyzoli@cs.elte.hu}}
  \and Dávid R. Szabó\thanks{ELTE Linear Hypergraphs  Research Group,
		E\"otv\"os Lor\'and University, Budapest, Hungary. The author is supported by the University Excellence Fund of Eötvös Loránd University. E-mail: {\tt szabo.r.david@gmail.com}}}
\begin{document}

\maketitle

\begin{abstract}
In this paper, we study the cardinality of the smallest set of lines of the finite projective spaces $\PG(n, q)$ such that every plane is incident with at least one line of the set. This is the first main open problem concerning the minimum size of $(s,t)$-blocking sets in $\PG(n,q)$, where we set $s=2$ and $t=1$. 
 In $\PG(n,q)$, an $(s,t)$-blocking set refers to a set of $t$-spaces such that each $s$-space is incident with at least one chosen $t$-space. This is a notoriously difficult problem, as it is equivalent to determining the size of certain $q$-Turán designs and $q$-covering designs.
 We present an improvement on the upper bounds of Etzion and of Metsch via a refined scheme for a recursive construction, which in fact enables improvement in the general case as well.
\end{abstract}

\section{Introduction}

Let $q$ be a prime power and let $\F_q$ denote the finite field of $q$ elements.
 $\PG(n, q)$ denotes the $n$-dimensional projective space over $\F_q$, while a subspace of dimension $d$ will be called a $d$-space.
 
 In the history of finite geometry, one of the most  central problems is to describe the size and structure of {\em blocking sets}. A blocking set with respect to $s$-dimensional subspaces is a
 point set of  $\PG(n, q)$ which meets every $s$-dimensional subspace in at least one point.
 As observed by Bose and Burton,  the point set of an $(n-s)$-dimensional subspace is the smallest possible
 blocking set with respect to $s$-dimensional subspaces \cite{BB}. From this classical result, a whole theory of so-called Bose-Burton type theorems has emerged.
 Indeed, the above concept has  some far-reaching generalisations, when one can take sets of subspaces of a given dimension $t$ instead of point sets (which correspond to the case $t=0$).
  Let $t \le s$ be nonnegative integers.
   In this general context, an {\em $(s, t)$-blocking set} $B$ is a set of $t$-spaces of $\PG(n,q)$ such that every $s$-space of $\PG(n,q)$ contains at least one element of $B$. 

An example for $(s, t)$-blocking sets can be obtained from 
geometric $t$-spreads, first introduced by André \cite{Segre}.

\begin{defn}[$t$-spread]  A \emph{$t$-spread} of a projective space $\PG(n,q)$
 is a set of $t$-dimensional subspaces for some $t<n$, such that every point of the space lies in exactly one of the elements of the spread.
\end{defn}
The necessary and sufficient condition for the existence of $t$-spreads in $\PG(n,q)$ is that $(t+1) \mid (n+1)$.

Beutelspacher and Ueberberg found the minimal $(s,t)$-blocking sets in the following cases.

\begin{theorem}[Beutelspacher and Ueberberg, \cite{beutel}]\label{thm:beutel}
    Suppose that $\blocking$ is an $(s,t)$-blocking set. If
$n \le s +s/t-1$ or $t=0$, then 
$$|\blocking|\ge (q^{(t+1)(n+1-s)}-1)/(q^{t+1}-1),$$
where  equality is attained if and only if $\blocking$ is a so-called geometric $t$-spread in a subspace of dimension equal to $(n + 1 -s)(t + 1) -1$.
\end{theorem}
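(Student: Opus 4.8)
\emph{Overview.} The plan is to prove the lower bound $|\blocking|\ge\theta_m$, where $m:=n+1-s$ and $\theta_m:=(q^{(t+1)m}-1)/(q^{t+1}-1)$, by induction on $m$; to match it with an explicit $(s,t)$-blocking set of size $\theta_m$; and to read off the equality case from the induction. The hypothesis $n\le s+s/t-1$ enters only through the equivalent inequality $(t+1)m-1\le n$, which allows a $((t+1)m-1)$-dimensional subspace to sit inside $\PG(n,q)$; the lower bound itself needs no hypothesis.

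\emph{Construction (upper bound).} Put a Desarguesian (hence geometric) $t$-spread $\mathcal{S}$ on a subspace $\Sigma\cong\PG((t+1)m-1,q)$ of $\PG(n,q)$. The crux is a linear-algebra lemma obtained by field reduction: writing the underlying space of $\Sigma$ as $\F_{q^{t+1}}^{m}$ and fixing an $\F_q$-basis $\lambda_0,\dots,\lambda_t$ of $\F_{q^{t+1}}$, a subspace of projective dimension $d$ corresponds to an $\F_q$-subspace $\tilde U$ with $\dim_{\F_q}\tilde U=d+1$, and $\tilde U$ contains the spread element $\F_{q^{t+1}}v$ exactly when $v\in\bigcap_{i=0}^{t}\lambda_i^{-1}\tilde U$; this intersection has $\F_q$-dimension at least $(t+1)(d+1)-t(t+1)m=(t+1)(d+1-tm)$, so it is nonzero once $d\ge tm$. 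Hence every subspace of $\Sigma$ of dimension at least $tm$ contains a member of $\mathcal{S}$. Since any $s$-space $\sigma$ meets $\Sigma$ in dimension at least $s+((t+1)m-1)-n=tm$, we conclude $\sigma$ contains a member of $\mathcal{S}$, so $\mathcal{S}$ is an $(s,t)$-blocking set of size $\theta_m$. (For $t=0$ this is the Bose--Burton configuration, with $\mathcal{S}$ an $(n-s)$-space of points.)

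\emph{Lower bound.} Induct on $m$. For $m=1$ the whole space is the only $s$-space, so $|\blocking|\ge 1=\theta_1$. For $m\ge 2$, for each hyperplane $H\cong\PG(n-1,q)$ the family $\blocking_H$ of members of $\blocking$ contained in $H$ is an $(s,t)$-blocking set of $H$: an $s$-space $\sigma\subseteq H$ is blocked by some $\pi\in\blocking$, and then $\pi\subseteq\sigma\subseteq H$. By induction $|\blocking_H|\ge\theta_{m-1}$; double counting the incident pairs $(\pi,H)$ with $\pi\subseteq H$ gives $|\blocking|\cdot\frac{q^{n-t}-1}{q-1}=\sum_H|\blocking_H|\ge\frac{q^{n+1}-1}{q-1}\theta_{m-1}$, so $|\blocking|\ge\frac{q^{n+1}-1}{q^{n-t}-1}\theta_{m-1}$. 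A brief computation (using $q^{t+1}>1$) shows $\frac{q^{n+1}-1}{q^{n-t}-1}>q^{t+1}$ while $q^{t+1}\theta_{m-1}=\theta_m-1$, whence $|\blocking|>\theta_m-1$ and, by integrality, $|\blocking|\ge\theta_m$.

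\emph{Equality case, and the main difficulty.} If $|\blocking|=\theta_m$, the estimate above sharpens to $\sum_H(|\blocking_H|-\theta_{m-1})=\frac{q^{(t+1)(m-1)}(q^{s-tm}-1)}{q-1}$. When $s=tm$ (equivalently $n=(t+1)m-1$) this vanishes, so $|\blocking_H|=\theta_{m-1}$ for \emph{every} hyperplane, and by the inductive equality statement each $\blocking_H$ is a geometric $t$-spread of a $((t+1)(m-1)-1)$-subspace of $H$. Two members of $\blocking$ sharing a point would lie in a common hyperplane $H$ and hence in the spread $\blocking_H$ — impossible; so the members of $\blocking$ are pairwise disjoint, hence by cardinality form a $t$-spread of $\PG((t+1)m-1,q)$, which is geometric because the span of any two of its members lies in a hyperplane whose induced spread is geometric. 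I expect the genuine obstacle to be the case $s>tm$, where only \emph{most} hyperplanes satisfy $|\blocking_H|=\theta_{m-1}$: one must show that the resulting ``good'' sub-spreads are all confined to a single $((t+1)m-1)$-subspace $\Sigma$, so that $\blocking\subseteq\Sigma$ and inside $\Sigma$ we are back in the $s=tm$ situation. Establishing this confinement — a stability statement forbidding blocking sets of size $\theta_m$ that are ``spread out'' in $\PG(n,q)$ — is where the real work lies. (To phrase the conclusion in the statement's terms one uses that geometric $t$-spreads of $\PG(r(t+1)-1,q)$ with $r\ge 3$ are precisely the Desarguesian ones, while for $r=2$ every $t$-spread is geometric.)
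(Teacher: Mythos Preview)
The paper does not prove this theorem at all: it is quoted as a result of Beutelspacher and Ueberberg \cite{beutel} and used as background, so there is no ``paper's own proof'' to compare against.

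That said, your argument for the inequality is sound. The hyperplane–restriction plus double–counting step is exactly the $q$-Sch\"onheim recursion (the paper states this separately as \autoref{also}), and your integrality trick $|\blocking|>\theta_m-1\Rightarrow|\blocking|\ge\theta_m$ is correct since
\[
\frac{q^{n+1}-1}{q^{n-t}-1}\,\theta_{m-1}-(\theta_m-1)=\theta_{m-1}\cdot\frac{q^{t+1}-1}{q^{n-t}-1}>0.
\]
Your construction via field reduction is also correct, and you have identified precisely where the hypothesis $n\le s+s/t-1$ enters (namely $(t+1)m-1\le n$).

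Where your write-up is genuinely incomplete is the equality characterisation, as you yourself flag. Two points worth sharpening. First, in the boundary case $n=(t+1)m-1$ your ``two intersecting members lie in a common hyperplane'' step needs $\dim\langle\pi_1,\pi_2\rangle\le n-1$; when $m=2$ this fails for disjoint $\pi_1,\pi_2$, so the disjointness argument and the ``geometric'' verification require separate (easy) treatment there. Second, and more seriously, in the strict case $n>(t+1)m-1$ you only obtain
\[
\sum_H\bigl(|\blocking_H|-\theta_{m-1}\bigr)=\frac{q^{n-t}-q^{(t+1)(m-1)}}{q-1}>0,
\]
so some hyperplanes are ``bad''. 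Pinning $\blocking$ down to a single $((t+1)m-1)$-subspace from this information is the substantive part of Beutelspacher--Ueberberg's proof; it is not a routine consequence of the counting above, and you should expect to need their structural argument (or an equivalent stability lemma) rather than hope it falls out of the induction.
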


In the last fifty years the general problem of determining the smallest cardinality of a blocking set has been studied by several authors both in the original case when $t=0$ and the general case ($t\in \mathbb{N}$) (see \cite{BSZ-survey, Metsch2} and references therein). A connection to maximum rank distance codes \cite{Etz2} and to $q$-designs \cite{Braun, Etz1} has been investigated \cite{Etz2}, and these topics have been extensively studied later on (see \cite{Gabi}, \cite{Csajbok}, and \cite{Ozbudak}, and the references therein).
 In recent years, additional motivation for such results has been highlighted by the
increasing interest in codes over the Grassmannian as a result of their application to error correction in network coding as was demonstrated by Koetter and Kschischang \cite{KK}.
Even so, only a few instances are known where the problem is  completely solved \cite{BB,  Metsch, Pavese}.

\paragraph{Known results in the case $t=1$.}

Consider the problem of finding minimal $(s,1)$-blocking sets in $\PG(n,q)$. 
\autoref{thm:beutel} solves the problem on the interval $s\leq n\leq 2s-1$. 
In the case of the subsequent interval $2s\leq n\leq 3s-3$, by pulling back the $(q^{2s}-1)/(q^2-1)$ lines forming a minimal $(s,1)$-blocking set in a suitable $2s-1$-dimensional quotient space, Metsch constructed an $(s,1)$-blocking set in $\PG(n,q)$.  
This construction is minimal and essentially is the unique minimal one. 
\begin{theorem}[Eisfeld and Metsch, {\cite[Theorem 1.2.]{eis}, \cite[Theorem 1.2]{Metsch}}]\label{thm:n<3s-2}
    Let $2s\leq n\leq 3s-3$ for $s\geq 3$, or $n=4$ for $s=2$. 
    Let $q$ be a prime power. 
    Write $k\leteq n-2s$. 
    Assume further that $q\geq 4^{k+1}+2^{k+1}+1$ if $n\neq 2s$. 
    Then every $(s,1)$-blocking set $\blocking$ in $\PG(n,q)$ satisfies \[\cardinality{\blocking}\geq q^{2k+2}\cdot \frac{q^{2s}-1}{q^2-1} + \sum_{i=0}^{k}q^i\sum_{j=k}^{n-s}q^j.\]
    Furthermore, there are blocking sets $\blocking$ with equality above, and the structure of such sets is known. 
\end{theorem}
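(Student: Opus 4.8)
\emph{The matching construction.} Writing $f(n)\leteq q^{2k+2}\cdot\frac{q^{2s}-1}{q^2-1}+\sum_{i=0}^{k}q^i\sum_{j=k}^{n-s}q^j$, a direct manipulation gives the recursion $f(n)=q^2 f(n-1)+\sum_{j=k}^{n-s}q^j$ for $n>2s$, and I would build the extremal examples to mirror it, starting from a line spread of a $\PG(2s-1,q)$ (optimal by \autoref{thm:beutel}). Given an optimal $(s,1)$-blocking set $\blocking_{n-1}$ of a hyperplane $H\cong\PG(n-1,q)$ of $\PG(n,q)$, fix a point $P\notin H$ and let $\rho$ be the projection from $P$ onto $H$. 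Let $\blocking$ consist of (a) for each $m\in\blocking_{n-1}$, all $q^2$ lines of the plane $\generated{P,m}$ that miss $P$, together with (b) the lines $\generated{P,x}$ for $x$ in a point set $T\subseteq H$ meeting every $(s-1)$-space of $H$ that contains no line of $\blocking_{n-1}$. Then $\blocking$ is an $(s,1)$-blocking set: an $s$-space $S$ with $P\notin S$ has $\rho(S)$ an $s$-space of $H$, hence containing some $m\in\blocking_{n-1}$, and $S\cap\generated{P,m}$ is a line missing $P$, so it lies in $\blocking$; an $s$-space $S=\generated{P,S_0}$ through $P$ is handled either by a line $m\in\blocking_{n-1}$ with $m\subseteq S_0$ (then $\generated{P,m}\cap\blocking\subseteq S$) or by a line $\generated{P,x}\subseteq S$ with $x\in T\cap S_0$. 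Since the lines so chosen are pairwise distinct, $\cardinality{\blocking}=q^2\cardinality{\blocking_{n-1}}+\cardinality{T}$. The point of the construction is that $T$ may be taken to be an $(n-s)$-dimensional subspace (a point-blocking set for the $(s-1)$-spaces of $H$, by Bose--Burton) with a sub-$\PG(k-1,q)$ of points deleted because the $(s-1)$-spaces through those points already meet $\blocking_{n-1}$ in a line; this yields $\cardinality{T}=\sum_{j=k}^{n-s}q^j$, and the recursion telescopes $\cardinality{\blocking}$ down to $f(n)$. The one delicate step is reading off, from the known structure of $\blocking_{n-1}$, exactly which $(s-1)$-spaces of $H$ it meets in a line, so that this economy in $T$ is justified.

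\emph{The lower bound.} I would argue by induction on $n$, treating $n=2s$ (the case $k=0$, which carries no hypothesis on $q$) as a separate base case. Suppose $\blocking$ is an $(s,1)$-blocking set of $\PG(n,q)$ with $k\geq 1$ and $q\geq 4^{k+1}+2^{k+1}+1$, and assume for contradiction that $\cardinality{\blocking}<f(n)$. For any hyperplane $H$, the lines of $\blocking$ contained in $H$ form an $(s,1)$-blocking set of $H\cong\PG(n-1,q)$ (an $s$-space lying in $H$ and the blocking line it contains are both inside $H$), so by induction this restriction $\blocking_H$ satisfies $\cardinality{\blocking_H}\geq f(n-1)$. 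The counting identity $\sum_H\cardinality{\blocking_H}=\cardinality{\blocking}\cdot\frac{q^{n-1}-1}{q-1}$ then only gives $\cardinality{\blocking}\geq\frac{q^{n+1}-1}{q^{n-1}-1}f(n-1)$, which falls short of $f(n)$ by a positive lower-order term of order roughly $q^{n-s}$; closing this deficit is the substance of the argument. To do so I would invoke the \emph{structural} half of the inductive hypothesis: since $\cardinality{\blocking}<f(n)$ pins $\cardinality{\blocking_H}$ into a narrow window above $f(n-1)$ for all but a controlled family of hyperplanes, a stability form of the induction forces $\blocking_H$ to be a bounded perturbation of the cone construction inside $H$ for those $H$, and fitting these restrictions together forces $\blocking$ itself to be a bounded perturbation of a cone, over an extremal $(s,1)$-blocking set of a hyperplane and with some vertex $P_0$. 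A direct count in this near-rigid situation---mirroring the construction, and in particular detecting that each relevant plane through $P_0$ must carry essentially all $q^2$ of its lines missing $P_0$ (else some $s$-space over an $s$-space with a unique blocking line is left unblocked) and that the $(s-1)$-spaces through $P_0$ force the full $\sum_j q^j$ extra lines---then yields $\cardinality{\blocking}\geq f(n)$, the desired contradiction, and the equality case falls out of the same analysis. The main obstacle is precisely this propagation of near-extremal structure from codimension $1$ up to $\PG(n,q)$ together with the concluding exact count; the tolerances in the stability statements shrink by a factor of about $4$ per level of the induction, which is exactly what the hypothesis $q\geq 4^{k+1}+2^{k+1}+1$ is there to absorb.
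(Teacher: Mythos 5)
This theorem is quoted by the paper from the literature (Eisfeld--Metsch for $n=2s$, Metsch for $2s<n\le 3s-3$); the paper supplies no proof of its own, so there is nothing internal to compare your attempt against -- I can only assess whether your sketch would stand on its own.

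Your construction half is on the right track. The cone-plus-tower over a hyperplane, the verification that it blocks every $s$-space (split on whether $P\in S$), and the telescoping identity $f(n)=q^2f(n-1)+\sum_{j=k}^{n-s}q^j$ are all correct. The step you yourself flag as delicate -- that one may delete a $\PG(k-1,q)$ from the Bose--Burton $(n-s)$-space because the $(s-1)$-spaces of $H$ that miss the remaining $T$ already carry a line of $\blocking_{n-1}$ -- is precisely the point where you would need a concrete structural lemma about the inductively-built $\blocking_{n-1}$ (in effect, that the deleted subspace may be placed inside the ``vertex'' accumulated by the iterated cone). You assert this rather than prove it, so the construction side is plausible but incomplete.

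The lower-bound half is a genuine gap, not just a delicate step. You correctly observe that restricting to hyperplanes and averaging reproduces only the $q$-analog Sch\"onheim bound and falls short of $f(n)$ by a term of order $q^{n-s}$. But the mechanism you propose to close the deficit -- a stability/rigidity induction -- is left entirely unargued at every link: (i) you would need a \emph{stability version} of the inductive hypothesis (near-minimal $(s,1)$-blocking sets in $\PG(n-1,q)$ are structurally close to the extremal ones), which is strictly stronger than the theorem and would require its own proof; (ii) the claim that hyperplane-by-hyperplane near-rigidity ``fits together'' to near-rigidity of $\blocking$ in $\PG(n,q)$ is a nontrivial gluing lemma you do not state, let alone prove; (iii) the ``direct count in the near-rigid situation'' that is supposed to produce the contradiction is not carried out; (iv) the claim that tolerances shrink by a factor of $4$ per level, matching $q\ge 4^{k+1}+2^{k+1}+1$, is a post-hoc rationalisation rather than something derived from an actual estimate; and (v) the base case $n=2s$ of your induction is the full Eisfeld--Metsch theorem, which you do not address at all (the paper itself describes that proof as a ``tour-de-force''). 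Moreover, this stability route is not how the cited works proceed: both papers use intricate direct double-counting / standard-equation arguments (the same flavour as \autoref{sect:lowerb}), not an approximate-structure induction. So both in its unfinished state and in its method, the lower-bound half does not constitute a proof.
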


\paragraph{The case $(s,t)=(2,1)$.}

In the examples above, $n$ is bounded. 
Our main intention is to determine the size of $(s,t)$-blocking sets in projective spaces of higher dimension, at least when $(s,t)=(2,1)$. 
Even in this case, the problem is solved only in low dimensions.
Our main function will be as follows.
\begin{defn}\label{defn:f}
Let $f(n,q)$ denote the smallest possible size of a $(2,1)$-blocking set in $\PG(n,q)$ for $n\geq -1$.
\end{defn}
\begin{remark}
    Note that we have $f(-1,q)=f(0,q)=f(1,q)=0$, since in these degenerate cases,  there are no $2$-dimensional subspaces in $\PG(n,q)$.
\end{remark}
 
Apart from the trivial case $n=2$, \autoref{thm:beutel} and \autoref{thm:n<3s-2} gives the following known values.
\begin{theorem}\label{thm:f234}
For every prime power $q$, we have 
$f(2,q)=1$,
$f(3,q)=q^2+1$, 
$f(4,q)=q^4+2q^2+q+1$.
\end{theorem}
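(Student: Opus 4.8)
The statement to prove, \autoref{thm:f234}, collects three values. The plan is to treat them as successive special cases of the two general theorems quoted, checking in each case that the hypotheses are met and that the stated closed form agrees with the general formula.

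\medskip

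\emph{Value $f(2,q)=1$.} Here $n=s=2$, so a single plane is the whole space and the only $2$-space; any line blocks it, and a blocking set cannot be empty since $\PG(2,q)$ does contain a plane (itself). Alternatively this is the $n=s$ endpoint of \autoref{thm:beutel} with $t=1$, giving $(q^{2}-1)/(q^{2}-1)=1$, attained by any line (a ``geometric $1$-spread'' in a $1$-space being just a single line). So I would dispatch this with one sentence.

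\medskip

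\emph{Value $f(3,q)=q^{2}+1$.} This is the interval $s\le n\le 2s-1$ of \autoref{thm:beutel}: with $s=2$, $t=1$, $n=3$ we have $n=2s-1\le s+s/t-1=2$? No — one must check $n\le s+s/t-1=2+2-1=3$, which holds with equality. The formula gives $(q^{(t+1)(n+1-s)}-1)/(q^{t+1}-1)=(q^{2\cdot 2}-1)/(q^{2}-1)=q^{2}+1$, and equality holds for a $1$-spread of $\PG(3,q)$, which exists since $(t+1)=2\mid 4=(n+1)$. So $f(3,q)=q^{2}+1$.

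\medskip

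\emph{Value $f(4,q)=q^{4}+2q^{2}+q+1$.} This is the first nontrivial case of \autoref{thm:n<3s-2}: take $s=2$, $n=4$, which is exactly the exceptional case ``$n=4$ for $s=2$'' allowed in that theorem. Then $k=n-2s=0$, so the congruence/size hypothesis ``$q\ge 4^{k+1}+2^{k+1}+1$ if $n\neq 2s$'' is vacuous because $n=2s$. The lower bound reads
\[
\cardinality{\blocking}\ \ge\ q^{2}\cdot\frac{q^{4}-1}{q^{2}-1}\ +\ \sum_{i=0}^{0}q^{i}\sum_{j=0}^{2}q^{j}
\ =\ q^{2}(q^{2}+1)\ +\ (1+q+q^{2})\ =\ q^{4}+2q^{2}+q+1,
\]
and the theorem also asserts the existence of a blocking set attaining this bound, so $f(4,q)=q^{4}+2q^{2}+q+1$.

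\medskip

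\emph{Main obstacle.} There is essentially no mathematical obstacle here: the proof is bookkeeping, matching parameters and simplifying the two closed-form expressions. The one point requiring care is verifying that the boundary/edge values of the parameters in Theorems~\ref{thm:beutel} and~\ref{thm:n<3s-2} are genuinely included in their stated ranges — in particular that $n=3$ falls in the range of \autoref{thm:beutel} for $s=2$, and that $n=4,\ s=2$ is the explicitly permitted exceptional instance of \autoref{thm:n<3s-2} with the size hypothesis rendered vacuous by $n=2s$. Once those inclusions are confirmed, the three equalities follow by direct substitution.
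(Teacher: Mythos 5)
Your proposal is correct and follows the same route the paper takes: the paper itself derives these three values by substituting $(s,t)=(2,1)$ and $n\in\{2,3,4\}$ into \autoref{thm:beutel} and \autoref{thm:n<3s-2} (with the $n=2$ case called trivial), exactly as you do. Your parameter checks — $n\le s+s/t-1=3$ for \autoref{thm:beutel}, the explicitly permitted $(s,n)=(2,4)$ case of \autoref{thm:n<3s-2} with the $q$-hypothesis vacuous since $n=2s$ — and the arithmetic simplifications are all accurate.
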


While the proof for $n=3$ was not very involved, the $4$-dimensional case required a tour-de-force proof \cite{eis}.
In fact, Metsch \cite{Metsch} notes that his refined double counting technique even leads to an exact result for $n=5$ as well with $f(5,q)=q^6+2q^4+2q^3+2q^2+q$ for large enough $q$, but the argument remained unpublished.

Let us point out that these results translate to results concerning $q$-covering designs and $q$-Turán designs, which will be detailed in the next section.
Note that $f(n,q)$ is the minimum size of a $q$-Turán design $T_q[n+1,3,2]$.

In general, the $q$-analog Schönheim bound, formulated by Etzion and Vardy  \cite{Etz2}, gives the best known lower bound. It provides a lower bound for every $(s,t)$-blocking set (see also in Section \ref{sect:lowerb}); below we apply it to the case $(s,t)=(2,1)$.  

\begin{theorem}[Etzion, Vardy, \cite{Etz2}]\label{also}
$$f(n,q)\geq \left\lceil \frac{q^{n+1}-1}{q^{n-1}-1}\cdot f(n-1,q)\right\rceil.$$
\end{theorem}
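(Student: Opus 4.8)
The plan is to bound $f(n,q)$ from below by a double count of the incidences between the lines of an optimal $(2,1)$-blocking set and the hyperplanes of $\PG(n,q)$, the point being that each hyperplane must already contain many lines of the blocking set.

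First I would prove the following localisation: if $\blocking$ is any $(2,1)$-blocking set of $\PG(n,q)$ and $H$ is a hyperplane, then $\blocking_H\leteq\{\ell\in\blocking: \ell\subseteq H\}$ is a $(2,1)$-blocking set of $H\isom\PG(n-1,q)$. Indeed, any plane $\pi\subseteq H$ is a plane of $\PG(n,q)$, so it contains some $\ell\in\blocking$; since $\ell\subseteq\pi\subseteq H$, that line lies in $\blocking_H$ and blocks $\pi$ inside $H$. By minimality of $f(n-1,q)$ this gives $\cardinality{\blocking_H}\geq f(n-1,q)$ for every hyperplane $H$.

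Now take $\blocking$ with $\cardinality{\blocking}=f(n,q)$ and count pairs $(\ell,H)$ with $\ell\in\blocking$, $H$ a hyperplane and $\ell\subseteq H$. Summing over hyperplanes and using the previous step, this number is at least $\tfrac{q^{n+1}-1}{q-1}\cdot f(n-1,q)$, since $\PG(n,q)$ has $\tfrac{q^{n+1}-1}{q-1}$ hyperplanes. Summing instead over the lines of $\blocking$, each line is contained in exactly $\tfrac{q^{n-1}-1}{q-1}$ hyperplanes (as many as there are hyperplanes in the quotient space $\PG(n-2,q)=\PG(n,q)/\ell$), so the count equals $f(n,q)\cdot\tfrac{q^{n-1}-1}{q-1}$. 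Comparing the two expressions yields $f(n,q)\cdot\tfrac{q^{n-1}-1}{q-1}\geq f(n-1,q)\cdot\tfrac{q^{n+1}-1}{q-1}$, i.e. $f(n,q)\geq\tfrac{q^{n+1}-1}{q^{n-1}-1}f(n-1,q)$, and the ceiling may be inserted because $f(n,q)$ is an integer.

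There is no real obstacle here: the argument is essentially the one-line geometric observation of the second step together with routine counting, and is precisely the $(s,t)=(2,1)$ case of the general $q$-analogue Schönheim bound obtained by restricting to a hyperplane (equivalently, via the covering-/Turán-design duality and the classical Schönheim recursion for $q$-covering designs). The only things to keep an eye on are the degenerate low-dimensional cases — for $n\leq 2$ one has $f(n-1,q)=0$ and the inequality is trivial — and the quotient-space computation of the number of hyperplanes through a fixed line.
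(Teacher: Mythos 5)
Your proof is correct and takes essentially the same route as the paper. Although the paper cites Theorem~\ref{also} to Etzion--Vardy without an inline argument, it later proves the equivalent density-increment statement (Lemma~\ref{increment}) via Lemma~\ref{lem:restrictionIsBlocking} and a double count of incidences between blocking lines and $k$-subspaces; your argument is precisely that proof specialised to $k=n-1$ (hyperplanes), including the restriction-to-a-subspace step, the count of hyperplanes through a fixed line via the quotient $\PG(n-2,q)$, and the ceiling insertion by integrality.
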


If one applies  \autoref{also} recursively and omits the ceiling, this leads to

\begin{prop}[Lower bound]\label{main1}
For every $n\geq 4$ and prime power $q$, we have
 \[f(n,q)\geq (q^{n+1}-1)(q^n-1)\cdot \frac{q^4+2 q^2+q+1}{(q^5-1)(q^4-1)}.\]
In particular, estimating the rational function from below for $n\geq 5$ gives  \[f(n,q)\ge q^{2n-4}+2q^{2n-6}+q^{2n-7}+2q^{2n-8}.\]
\end{prop}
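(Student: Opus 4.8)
The plan is to apply the recursive lower bound of \autoref{also} repeatedly, all the way down to dimension $4$, where the exact value $f(4,q)=q^4+2q^2+q+1$ is available from \autoref{thm:f234}, and then to estimate the resulting rational expression from below. Dropping the ceiling in \autoref{also} gives $f(m,q)\ge \frac{q^{m+1}-1}{q^{m-1}-1}\,f(m-1,q)$ for every $m\ge 2$, and applying this successively for $m=n,n-1,\dots,5$ yields
\[f(n,q)\ \ge\ \left(\prod_{m=5}^{n}\frac{q^{m+1}-1}{q^{m-1}-1}\right) f(4,q).\]
The product telescopes: after re-indexing, its numerator is $\prod_{k=6}^{n+1}(q^k-1)$ and its denominator is $\prod_{k=4}^{n-1}(q^k-1)$, so everything cancels except $q^n-1$ and $q^{n+1}-1$ in the numerator and $q^4-1$ and $q^5-1$ in the denominator. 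Substituting $f(4,q)=q^4+2q^2+q+1$ then gives the first displayed inequality; for $n=4$ the product is empty and the inequality reduces to the trivial equality $f(4,q)=f(4,q)$.

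For the second bound I would rewrite the target $q^{2n-4}+2q^{2n-6}+q^{2n-7}+2q^{2n-8}$ as $q^{2n-8}\bigl((q^4+2q^2+q+1)+1\bigr)$, so that, in view of the first bound, it suffices to prove
\[\frac{(q^{n+1}-1)(q^n-1)}{q^{2n-8}(q^5-1)(q^4-1)}\ \ge\ 1+\frac{1}{q^4+2q^2+q+1}\qquad(n\ge 5).\]
Writing $(q^{n+1}-1)(q^n-1)=q^{2n+1}(1-q^{-n-1})(1-q^{-n})$, the left-hand side equals $\dfrac{q^{9}(1-q^{-n-1})(1-q^{-n})}{(q^5-1)(q^4-1)}$; for $n\ge 5$ one has $q^{-n-1}\le q^{-6}$ and $q^{-n}\le q^{-5}$, so it is at least $\dfrac{q^{9}(1-q^{-6})(1-q^{-5})}{(q^5-1)(q^4-1)}=\dfrac{q^6-1}{q^2(q^4-1)}=1+\dfrac{1}{q^2(q^2+1)}$. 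Since $q^2(q^2+1)=q^4+q^2\le q^4+2q^2+q+1$ for every prime power $q$, the required inequality holds, and combining it with the first bound finishes the proof.

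All of this consists of elementary polynomial manipulations and estimates, so I do not anticipate a genuine obstacle. The only points that require care are the index bookkeeping in the telescoping product and the fact that the rational estimate is most delicate at $n=5$: it is precisely there that the slack $\frac{1}{q^2(q^2+1)}$ over $1$ is just barely enough to promote the coefficient of $q^{2n-8}$ from $1$ to $2$ (consistently, the second bound fails at $n=4$, where that slack degenerates to $0$).
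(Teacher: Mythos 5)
Your proposal is correct and follows the same route the paper intends: iterate Theorem~\autoref{also} with the ceiling dropped, telescope the product down to the known value $f(4,q)=q^4+2q^2+q+1$, and then bound the resulting rational function from below. Your explicit estimate (rewriting the target as $q^{2n-8}\bigl((q^4+2q^2+q+1)+1\bigr)$, reducing to the worst case $n=5$, and noting $1+\tfrac{1}{q^4+q^2}\ge 1+\tfrac{1}{q^4+2q^2+q+1}$) is a clean way to fill in the detail the paper leaves implicit, and the bookkeeping in the telescoping product is correct.
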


Moreover, building on the claim of Metsch \cite{Metsch} for $f(5,q)$, we would also have $$f(n,q)\ge q^{2n-4}+2q^{2n-6}+2q^{2n-7}+2q^{2n-8}.$$

Our contribution is an improvement on the upper bounds according to the result below, which roughly matches the bound above.

\begin{theorem}[Upper bound]\label{main2}
If $n\ge 2$ is fixed and $q$ is a variable prime power, then $$f(n,q)\le q^{2n-4}+2q^{2n-6}+2q^{2n-7}+3q^{2n-8}+    {3} q^{2n-9}+
    {3} q^{2n-10}+
 {3} q^{2n-11}+
    \bigO{q^{2n-12}}.$$
\end{theorem}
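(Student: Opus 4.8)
The plan is to prove the upper bound by a recursive construction in the dimension $n$, in the spirit of Metsch's pull-back idea but with a more careful accounting of the "error terms'' coming from repeated halving of the projective space. Concretely, suppose we have a good $(2,1)$-blocking set $\blocking_0$ in $\PG(m,q)$ for some small fixed $m$ (the base cases $m=4$ and $m=5$ from \autoref{thm:f234} and Metsch's claim, and perhaps $m=6,7$ constructed ad hoc to seed the lower-order coefficients). Given $\blocking_0$ in $\PG(m,q)$, I would fix a complementary subspace structure $\PG(n,q)\supset \PG(m,q)$ together with a projection $\pi$ from a disjoint $(n-m-1)$-space $\Pi$ onto $\PG(m,q)$; pulling each line $\ell\in\blocking_0$ back to the $(n-m)$-dimensional cone $\langle \Pi,\ell\rangle$ and decomposing that cone into lines blocks every plane of $\PG(n,q)$ that is not contained in, nor meets too deeply, the vertex space $\Pi$. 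One then has to separately block the planes that lie "close to'' $\Pi$; this is a lower-dimensional instance of the same problem (a $(2,1)$-blocking set inside $\Pi$ together with transversal lines), which is where the recursion feeds back in.

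The key steps, in order, are: (1) set up the cone construction precisely and count the lines it uses — this gives the main term $q^{2(n-m)}\cdot f(m,q)$ plus a controlled number of lines decomposing the cones; (2) identify exactly which planes remain unblocked (those whose image under $\pi$ is a point or a line of $\PG(m,q)$, equivalently those sharing a subspace of dimension $\geq m-2$ with $\Pi$), and bound their blocking cost by a combination of (a) a recursively constructed blocking set in $\Pi\cong\PG(n-m-1,q)$ and (b) a bounded family of "spread-like'' transversal lines through $\Pi$; (3) write down the resulting recursion $f(n,q)\leq q^{2(n-m)}f(m,q) + (\text{polynomial correction}) + f(n-m-1,q)$ and unroll it, choosing $m$ (e.g. $m=5$, so that the space drops by $6$ each step, or the $m=2$ "halving'' Metsch used) so that the dominant correction contributions line up to produce the stated coefficients $1,0,2,2,3,3,3,3$ in front of $q^{2n-4},\dots,q^{2n-11}$; (4) verify the claimed polynomial against the base cases for small $n$ and confirm the $\bigO{q^{2n-12}}$ error is genuinely absorbed. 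The "refined scheme'' promised in the abstract presumably enters at step (2)–(3): rather than pulling back a minimal blocking set in a single quotient, one interleaves the recursion so that the corrections at successive levels share lines, shaving the lower-order coefficients down to the values stated.

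The main obstacle I expect is step (2): bounding the cost of blocking the planes near the vertex space $\Pi$ tightly enough. A naive treatment (take a full blocking set in $\Pi$ at every level, plus a generous spread of transversals) would inflate the coefficient of $q^{2n-6}$ or $q^{2n-7}$ above the target; getting $2q^{2n-7}$ rather than $3q^{2n-7}$ forces one to reuse lines between the "cone decomposition'' of step (1) and the "near-$\Pi$'' correction of step (2), i.e. to choose the line decomposition of each cone so that it already blocks many of the exceptional planes for free. A secondary difficulty is bookkeeping: since the bound is an asymptotic expansion to eight significant terms, one must track the recursion's lower-order contributions with enough precision that telescoping geometric-series tails (each of the form $\sum_i q^{2i}\cdot(\text{const})$) land on exactly the advertised integer coefficients, and confirm that the choice of base dimension $m$ and the number of correction lines per level are compatible with the existence constraints (e.g. $(t+1)\mid(n+1)$-type divisibility for the spreads used as building blocks). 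Once the per-level cost is pinned down as a polynomial in $q$ with the right leading coefficients and the recursion is linear, unrolling is routine.
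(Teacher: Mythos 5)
You have correctly identified the general framework: a recursive construction that passes to a quotient space, pulls back a smaller blocking set, and supplements with a correction near the vertex. Your proposed recursion $f(n,q)\le q^{2(n-m)}f(m,q)+\text{correction}+f(n-m-1,q)$ is, after reparametrising $m=n-k-1$, exactly \autoref{recursion}. However, two essential ingredients are missing, and without them your proposal cannot reach the stated coefficients.

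First, your plan to keep the base dimension $m$ \emph{fixed} (you suggest $m=5$ or $m=2$) is not workable. The paper's improved recursion carries the hard constraint $k\le\frac{n-1}{2}$, equivalently $m\ge\frac{n-1}{2}$, so the base dimension must grow with $n$; with $m=5$ your construction is unavailable for $n\ge 12$. The constraint is not cosmetic: it comes from the polarity device used to build the correction lines (\autoref{cor:absolute}). Moreover, the paper shows (\autoref{powers_of_2}) that among admissible $k$ the optimal choice for large $q$ is the unique $k=k^*(n)$ with $n-k$ a power of two, i.e.\ a \emph{$n$-dependent} step. Indeed the abstract emphasises that replacing the step-by-one recursion with a step depending on $n$ is precisely what yields the improvement. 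If one ignores the constraint and tries, say, $m=5$ anyway, the geometric-series correction $\sum_{i=0}^{n-6}q^i\sum_{j=n-6}^{n-2}q^j$ alone contributes $4q^{2n-11}$, already overshooting the target coefficient~$3$.

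Second, you do not identify the mechanism that keeps the correction this small. You correctly sense that one must ``reuse'' lines between the cone decomposition and the near-$\Pi$ planes, but the actual device is a polarity: one chooses a nondegenerate symmetric bilinear form with $K$ totally isotropic (possible exactly when $\dim K\le\frac{n-1}{2}$, hence the constraint), so that for each point $P\in K$ the hyperplane $\polarity{\beta}(P)$ contains $K$, and for each line $L\subseteq K$ the union $\bigcup_{P\in L}\polarity{\beta}(P)=X$ (\autoref{lem:ballon}, \autoref{cor:ballonConstruction}). This is what lets the lines meeting $K$ in a point block all the planes meeting $K$ in a line outside $\blocking_K$, so the plane-meeting-$K$-in-a-line case adds only $f(k,q)$ rather than a separate correction. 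Without this idea you would either overcount or fail to block. So the proposal has the right skeleton and correctly flags where the difficulty lies, but the two ideas that actually make the coefficients $1,0,2,2,3,3,3,3$ come out — the power-of-two step and the polarity construction of the near-$K$ lines — are absent.
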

Observe that this upper bound is tight up to the first four leading terms.
Our recursive construction for $f(n,q)$ (see \autoref{recursion}) uses a parameter $k$. The construction in the cases $k\in \{0, 1\}$ specialises to that of Eisfeld's and Metsch's \cite{eis,Metsch}, which obtains bounds on $f(n,q)$ in dimensions $n=4$ and $n=5$ relying on the case of $n=3$ when we know the exact result via Theorem \ref{thm:beutel}. We show that one can improve upon this if we start off the recursion from dimension $n=3$, and crucially, instead of augmenting the dimension by one, we use our recursion with a dimension step depending on $n$, which enables us to block the planes more efficiently.

The paper is organised as follows.
In \autoref{sect:prelim} we set our main technical notations and the relation to the general question on $(s,t)$-blocking sets to $q$-Turán and subspace designs.  In \autoref{sect:constr} we discuss the recursive construction, which has origins from the papers of Metsch \cite{Metsch} and of Etzion and Vardy \cite{Etz2}.
In \autoref{sec:3.1} we propose a new recursive construction in \autoref{cor:explicitRecusrionST}  for general $(s,t)$-blocking sets. In \autoref{sec:3.2} we specialize to the case $(s,t)=(2,1)$ and improve our previous general construction further.
We point out how this constructive approach generalizes the one applied by Eisfeld and Metsch \cite{eis}, and then, further using the ideas of Metsch \cite{Metsch}, this  enables further improvement, presented in \autoref{sec:construction21}.  In \autoref{sec:recursiveUpperBOund}, we show how to use the recursion optimally in order to achieve the best upper bound via this recursive scheme. Next, in \autoref{sect:lowerb} we discuss the lower bounds and their possible improvement. Finally, the last section is dedicated to  some related problems and open questions on $q$-designs.
\section{Preliminaries}\label{sect:prelim}

\subsection{Notation, related results}

First, we introduce the concepts of $q$-covering designs, $q$-Turán designs, $q$-Steiner designs and subspace designs in general and their relations to each other.
These can be investigated as structures on the linear space, but also as structures on the projective space. Observe that there is a difference of $1$
between the dimension of a subspace of the linear space and the dimension of the corresponding subspace in the projective geometry. Indeed, (projective) $(k-1)$-subspaces of $\PG(n-1, q)$ correspond to $k$-subspaces of $\F_q^{n}$.

\begin{defn}    Let $r\le k\le n$.
\begin{itemize}
\item  A \textbf{$q$-covering design $C_q[n, k, r]$} is a set  $S$ of $k$-spaces of an $n$-dimensional vector space, such that each
 $r$-space of the vector space \textbf{is contained in at least one element of $S$}.  Let $\cC_q(n,k,r)$ denote the minimum number of $k$-spaces in a $q$-covering design ${C}_q(n,k,r)$.
\item A \textbf{$q$-Turán design $T_q[n, k, r]$} is a collection $S$ of $r$-spaces of an $n$-dimensional vector space, such that each
 $k$-space of the vector space  \textbf{contains at least one element of $S$}.  Let $\cT_q(n,k,r)$ denote the minimum number of $r$-spaces in a $q$-Turán design $T_q(n,k,r)$.
\item A \textbf{$q$-Steiner design $S_q[n, k, r]$} is a collection $S$ of $k$-spaces of an $n$-dimensional vector space, such that each
 $r$-space of the vector space \textbf{is contained in exactly one element of $S$}.  
\item Finally, an  \textbf{$(n, k, r, \lambda)_q$ subspace design} is a collection $S$ of $k$-spaces of an $n$-dimensional vector space, such that each
 $r$-space of the vector space  \textbf{is contained in exactly $\lambda$ elements of $S$}. 
\end{itemize}
\end{defn}
We recall the following well-known observation.
\begin{prop}[Duality between coverings and Turán designs]
An $(n, k, r, \lambda)_q$ subspace design is a $q$-Steiner design if $\lambda=1$. Since the dual structure of a $q$–Turán design $T_q(n, k, r)$ is a q–covering design  $C_q(n, n-r, n-k)$, we have $$\cT_q(n, k, r)=\cC_q(n, n-r, n-k).$$ Hence, a $q$-Steiner design is a $q$-covering design which corresponds also to a $q$-Turán design with appropriate parameters.
\end{prop}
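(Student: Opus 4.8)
The plan is to exhibit an explicit size-preserving bijection between $q$-Turán designs $T_q(n,k,r)$ and $q$-covering designs $C_q(n,n-r,n-k)$, induced by the standard duality of the ambient vector space $V=\F_q^n$. The first assertion of the statement is immediate from the definitions: an $(n,k,r,\lambda)_q$ subspace design with $\lambda=1$ is, by definition, a collection of $k$-spaces such that each $r$-space is contained in exactly one member, which is precisely a $q$-Steiner design $S_q[n,k,r]$.

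For the duality identity, I would fix a non-degenerate bilinear form on $V$ (equivalently, pass to the dual space and take annihilators) and let $\perp$ denote the resulting map $U\mapsto U^{\perp}$ on subspaces. Two standard facts will be used: $\perp$ restricts to a bijection from the $d$-spaces of $V$ onto the $(n-d)$-spaces of $V$ for every $d$, and it is inclusion-reversing, i.e. $U_1\subseteq U_2$ if and only if $U_2^{\perp}\subseteq U_1^{\perp}$. Given a $q$-Turán design $S$ (a set of $r$-spaces), set $S^{\perp}\leteq\{A^{\perp}:A\in S\}$, a set of $(n-r)$-spaces with $\cardinality{S^{\perp}}=\cardinality{S}$ since $\perp$ is injective. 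I then check that $S^{\perp}$ is a $q$-covering design $C_q(n,n-r,n-k)$: an arbitrary $(n-k)$-space of $V$ has the form $W^{\perp}$ for a unique $k$-space $W$ (this uses bijectivity of $\perp$ on $k$-spaces), and the Turán condition on $S$ provides some $A\in S$ with $A\subseteq W$; inclusion-reversal yields $W^{\perp}\subseteq A^{\perp}\in S^{\perp}$, so $S^{\perp}$ covers every $(n-k)$-space. Applying the same construction to a covering design gives the inverse map, so $S\mapsto S^{\perp}$ is a bijection between the two families of designs; minimising $\cardinality{S}$ over each family gives $\cT_q(n, k, r)=\cC_q(n, n-r, n-k)$.

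For the closing remark, a $q$-Steiner design $S_q[n,k,r]$ is in particular a $q$-covering design $C_q(n,k,r)$ (``exactly one'' implies ``at least one''), and dualising as above it corresponds to a $q$-Turán design with parameters $T_q(n,n-r,n-k)$, which is exactly the stated conclusion.

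I do not anticipate a genuine obstacle: the argument is the usual inclusion-reversing duality on the subspace lattice of $\F_q^n$. The only point meriting a moment's care is that the quantifier ``for every $k$-space'' in the Turán condition must be matched with ``for every $(n-k)$-space'' in the covering condition — this is precisely the surjectivity of $\perp$ from $k$-spaces onto $(n-k)$-spaces, without which one would obtain an implication in a single direction rather than an equivalence of the two defining properties (and hence only an inequality between the two minima).
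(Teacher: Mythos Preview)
Your argument is correct and is exactly the standard duality proof via the inclusion-reversing bijection $U\mapsto U^\perp$ on the subspace lattice of $\field{q}^n$. The paper itself does not supply a proof of this proposition at all—it merely recalls it as a well-known observation—so there is nothing to compare against; your write-up fills in precisely the details one would expect.
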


Using these notations, in order to study minimal-size blocking sets $ B$  of lines of $\PG(n-1,q)$ such that every plane of $\PG(n-1,q)$ contains at least one element of $B$, we wish to improve bounds on $\cT_q(n, 3, 2)=\cC_q(n, n-2, n-3)$.

To help the reader to distinguish different dimension notions, from now on, $\Dim(V)$ will denote the \emph{vector space dimension} when $V$ is introduced as a vector space, while $\dim(X)$ will refer to the \emph{projective dimension} for a projective space $X$.

\begin{defn}[Grassmannian]\label{def:grassmann}
Let  $X$ be a projective space over a field $\field{}$.
For $-1\leq d\leq \dim(X)$, let  
\[\GrassmannDimSpace{d}{X}\leteq \setBuilder{Y\subseteq X}{\text{$Y$ is a $d$-dimensional projective space (over $\field{}$)}},\]
the \emph{Grassmannian} of projective subspaces of dimension $d$. 
Write $\GrassmannSpace{X}=\bigcup_{d=-1}^{\dim(X)}\GrassmannDimSpace{d}{X}$ for the lattice of subspaces.
Note that $\GrassmannDimSpace{-1}{X}$ is the singleton set containing the unique smallest element of the lattice $\GrassmannSpace{X}$, e.g. $\dim(\emptyset)$ is considered to be $-1$.
Write $\GrassmannDimSpace{d}{n,q}$ as a shorthand for $\GrassmannDimSpace{d}{\PG(n,q)}$. 
See \autoref{fig:maps}.
\end{defn}

\begin{defn}\label{defn:Proj}
    Let $V$ be a vector space over a field $\field{}$. Its \emph{projectivisation} $\Projectivisation{V}$ is defined by \[\GrassmannDimSpace{d}{\Projectivisation{V}}\leteq \setBuilder{U\leq V}{\Dim_V(U)=d+1}\] for $-1\leq d\leq \dim(\Projectivisation{V})\leteq \Dim(V)-1$.
    We write $\V{K}$ for $K$ to indicate if we view it as a vector subspace of $V$, instead of an element of $\GrassmannSpace{\Projectivisation{V}}$. Call $\V{K}\leq V$, the \emph{subspace corresponding to $K\in \GrassmannSpace{\Projectivisation{V}}$}.
\end{defn}

\begin{remark}\label{rem:ProjAndVinverses}
   The projectivisation  $X\leteq\Projectivisation{V}$ of $V$ is a projective space over $\field{}$. For a subspace $K\in\GrassmannSpace{X}$,  
    $\V{K}$ is a subspace of $V$ of dimension $\Dim_V(\V{K}) = \dim_X(K)+1$.  
    These two operations are inverses of each other and give a bijection between the subspace lattice of $V$ and $\GrassmannSpace{\Projectivisation{V}}$.
\end{remark}

Next, we define the Gaussian binomial coefficients which count the number of subspaces of given dimension in a vector space, also allowing us to compute the size of the Grassmannians in projective spaces.

\begin{defn} For non-negative integers $m,d$ and a prime power $q$, the Gaussian binomial coefficients are defined by
$$\qbinom{m}{d}{q}
\leteq \begin{cases}
\frac{(q^m-1)(q^{m-1}-1)\cdots(q^{m-d+1}-1)} {(q^d-1)(q^{d-1}-1)\cdots(q-1)} & d \le m \\
0 & d>m. \end{cases}$$
\end{defn}

\begin{defn}[Quotient space]\label{def:quotientGeometry}
   Let $X$ be a projective space. For $K\in\GrassmannDimSpace{k}{X}$, the \emph{quotient space $X/K$} is a projective space of dimension $\dim(X/K)=\dim(X)-k-1$ given by 
    \[\GrassmannDimSpace{r}{X/K} \leteq \{Y\in\GrassmannDimSpace{r+k+1}{X}:K\subseteq Y\}\] for any $-1\leq r\leq \dim(X/K)$. 
    Note that $\GrassmannDimSpace{-1}{X/K}=\{K\}$. 
    See \autoref{fig:maps}.
\end{defn}

Quotients and projectivisations are related as follows.
\begin{remark}\label{rem:ProjQuotientCommute}
    For any vector subspace $U\leq V$, and any $-1\leq r\leq \dim(\Projectivisation{V}/\Projectivisation{U})$, we have 
    \begin{align*}
        \GrassmannDimSpace{r}{\Projectivisation{V}/\Projectivisation{U}} & = \setBuilder{Y\in\GrassmannDimSpace{r+\dim(U)}{\Projectivisation{V}}}{\Projectivisation{U}\subseteq Y} =
        \setBuilder{W\leq V}{\dim_V(W) = r+\dim(U)+1,U\leq W} 
        \\&\isom \GrassmannDimSpace{r}{\Projectivisation{V/U}}
    \end{align*}
    upon identifying $r$-dimensional subspaces of $V/U$ with $(r+\dim(U))$-dimensional subspaces of $V$ containing $U$. 
    In particular, $\Projectivisation{V}/\Projectivisation{U}$ can be identified with $\Projectivisation{V/U}$.
\end{remark}

\begin{remark}\label{rem:GrassmannianCardinality}  Let $X$ be a projective space over $\field{q}$. 
Note that $\grassmannDimSpace{d}{X} = \qbinom{\dim(X)+1}{d+1}{q}$, or more generally,  
$\grassmannDimSpace{d}{X/K} = \qbinom{\dim(X)-\dim(K)}{d+1}{q}$. 
In particular, the number of $d$-dimensional subspaces of $X$ containing a fixed $K\in\GrassmannDimSpace{k}{X}$ is 
$\grassmannDimSpace{d-k-1}{X/K} = 
\qbinom{\dim(X)-\dim(K)}{d-\dim(K)}{q}$.
\end{remark}

Finally, we use the notation $[a,b]$ for the set of integers $\{k\in \mathbb{Z}: a\le k\le b\}$.

\begin{figure}[!htb]
    \centering
    \includegraphics[height=0.5\textheight]{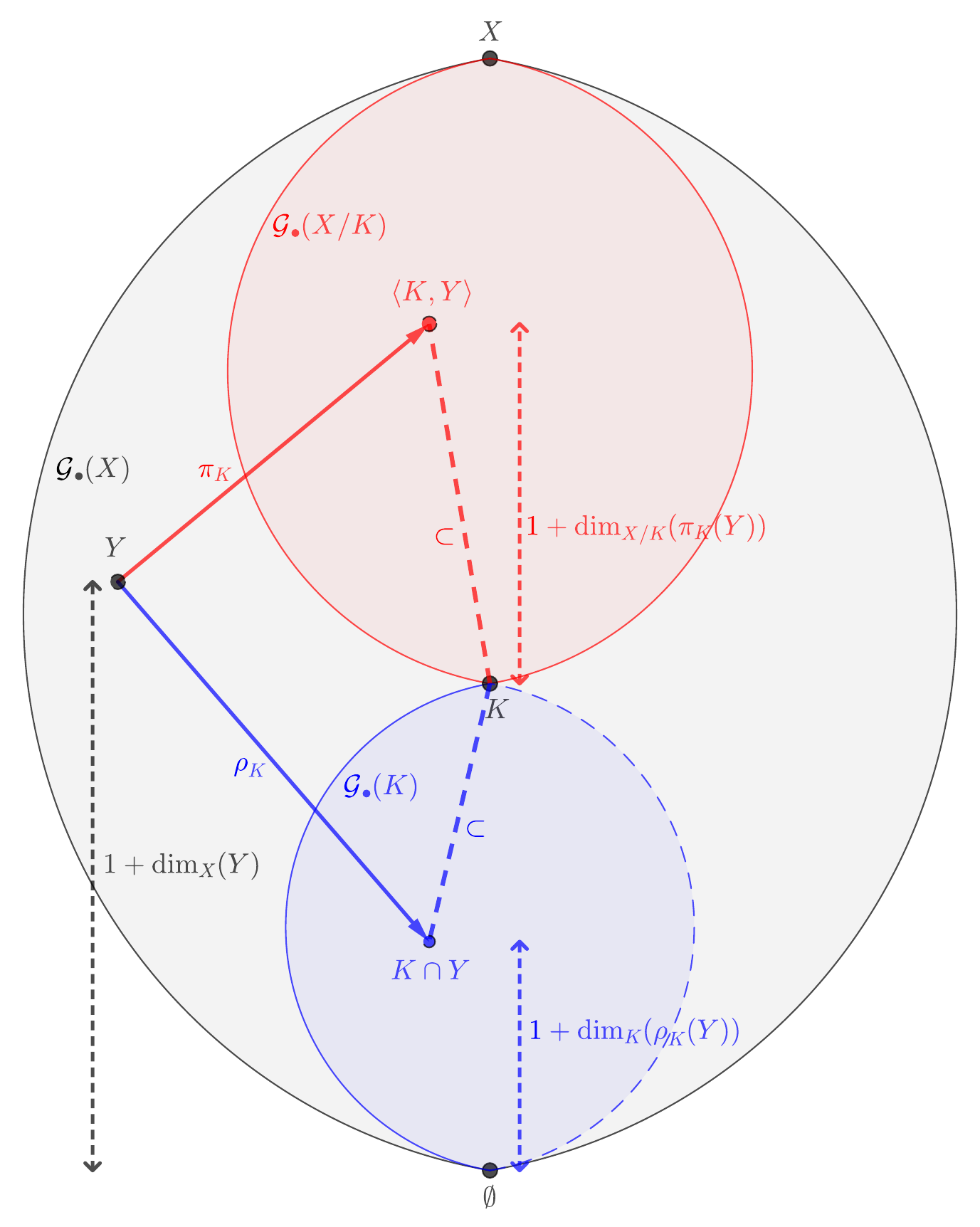}
    \caption{The lattice $\GrassmannSpace{X}$ of all subspaces of $X$ (see \autoref{def:grassmann}). 
    For a fixed $K\in\GrassmannSpace{X}$, 
    elements smaller than $K$ can be identified with $\GrassmannSpace{K}$ (blue part), 
    while elements bigger than $K$ can be identified with  $\GrassmannSpace{X/K}$ (red part) (see \autoref{def:quotientGeometry}).
    The maps $\pi_K$ (mapping to the quotient space $X/K$) and 
    $\rho_K$ (mapping to the subspace $K$) are indicated (see \autoref{def:shortExactSequence}). 
    In fact, $\generated{K,Y}$ is the least upper bound of $K$ and $Y$, 
    and $K\cap Y$ is the greatest lower bound of $K$ and $Y$ in the lattice $\GrassmannSpace{X}$. 
    The dimensional formula of \eqref{eq:dimension_Pi_Rho} translates to the fact that the length of the vertical segment on the left (black) equals the sum of the lengths of the vertical segments on the right (blue and red) -- assuming that subspaces of the same dimension are represented by points (of the lattice above) lying on a horizontal line, and that these lines are equidistant and are increasing with the dimension.
    }
    
    \label{fig:maps}
\end{figure}

\section{ Upper bound for \texorpdfstring{$f(n,q)$}{f(n,q)}: constructions}\label{sect:constr}

Here we first show a recursive construction scheme which gives $(s,t)$-blocking sets in dimension $n$, relying on constructions (or upper bounds) in  projective spaces of smaller dimension. Next, we specialise to the case $(s,t)=(2,1)$.
Then we will prove which recursive steps will lead to the best upper bounds in the general scheme, in arbitrary dimension.

\subsection{Recursive construction for general \texorpdfstring{$(s,t)$}{(s,t)}}\label{sec:3.1}

Our aim is to construct $(s,t)$-blocking sets recursively by passing to the quotient space. For this, we will need the following natural maps between the space, its subspace and its quotient.

\begin{defn}\label{def:shortExactSequence}
    Let $X$ be a projective space and $K\in\GrassmannDimSpace{k}{X}$. 
    Define the following inclusion-preserving maps  
    \[\begin{tikzcd}
        \GrassmannSpace{K} \ar[r,->,hook,"\iota_K"] 
        & \GrassmannSpace{X} \ar[r,->>,"\pi_K"]  \ar[l,->>,bend left,"\rho_K"]
        & \GrassmannSpace{X/K}  \ar[l,->,hook,bend left,"\theta_K"]
    \end{tikzcd}\]
    between the set of subspaces by
    $\iota_K\from Y\mapsto Y$ (the natural inclusion),  
    $\pi_K\from Y\to \generated{K,Y}$ (the natural projection), 
    $\rho_K\from Y\mapsto K\cap Y$ and
    $\theta_K\from Z\mapsto Z$. 
    See \autoref{fig:maps}.
\end{defn}
\begin{remark}\label{rem:dimProjection}
    Note that the compositions $\rho_K\circ \iota_K=\id_{\GrassmannSpace{K}}$ and 
    $\pi_K\circ \theta_K = \id_{\GrassmannSpace{X/K}}$ are both the identity maps, while the compositions    
    $\pi_K\circ \iota_K\from \GrassmannSpace{K}\to \GrassmannDimSpace{-1}{X/K}=\{K\}$ and 
    $\rho_K\circ \theta_K\from \GrassmannSpace{X/K}\to \GrassmannDimSpace{k}{K}=\{K\}$
    are both the constant maps. 
    Note that $\iota_K(Y)\subseteq K\subseteq \theta_K(Z)$ for every $Y\in\GrassmannSpace{K}$ and $Z\in\GrassmannSpace{X/K}$ by \autoref{def:quotientGeometry}.
    Furthermore,  for any $Y\in\GrassmannSpace{X}$, we have 
    \begin{equation}
        \dim_{K}(\rho_K(Y)) + \dim_{X/K}(\pi_K(Y)) = \dim_X(Y) -1,
        \label{eq:dimension_Pi_Rho}
    \end{equation}
    whereas the other two maps restrict to $\theta\from\GrassmannDimSpace{r}{X/K}\to \GrassmannDimSpace{r+k+1}{X}$ (for any $-1\leq r\leq n-k-1$), 
    and to $\iota_K\from \GrassmannDimSpace{d}{K}\to \GrassmannDimSpace{d}{X}$ (for any $-1\leq d\leq k$).
\end{remark}

\begin{figure}[!htb]
    \centering
    \includegraphics[height=0.5\textheight]{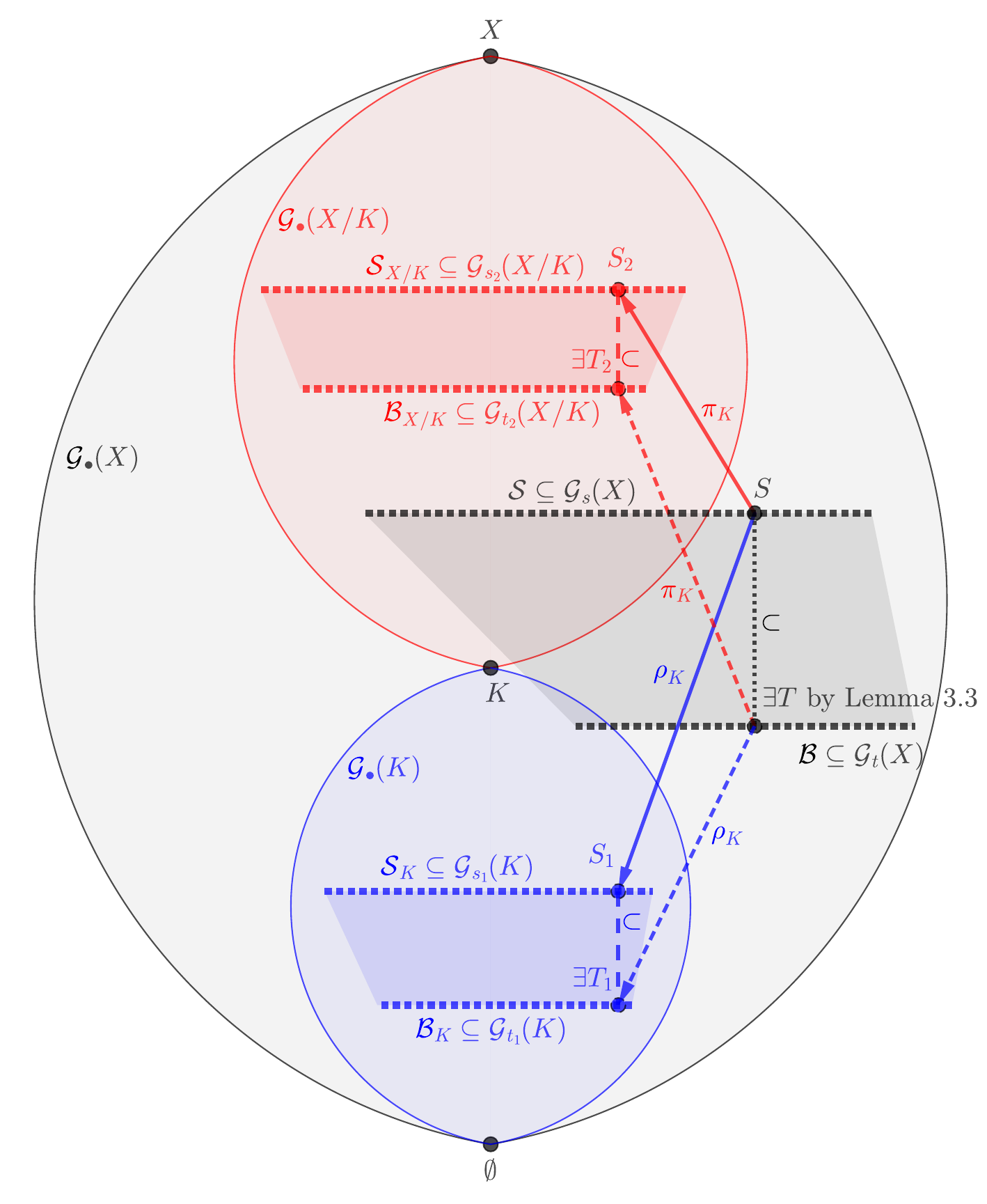}
    \caption{The main idea of the proof of \autoref{partial} is indicated in this figure. 
    Start from a subspace $S\in\cS$ to be blocked. 
    Then $S_1\leteq\rho_K(S)\in\cS_{K}$ is blocked by some $T_1\in\blocking_{K}$, 
    and $S_2\leteq\pi_K(S)\in\cS_{X/K}$ is blocked by some $T_2\in\blocking_{X/K}$. 
    By \autoref{lem:T1T2}, there exists $T\in\GrassmannSpace{X}$ such that $\rho_K(T)=T_1$ and $\pi_K(T)=T_2$. 
    Finally, one can show that $T$ is actually from $\blocking$ and blocks $S$. 
    (Here, thick horizontal dashed lines indicate subsets consisting of subspaces of given dimension. 
    Shaded regions between these lines indicate the connection between partial blocking sets and the sets they block.) 
    }
    \label{fig:pullback}
\end{figure}

The isomorphism of $U\oplus (V/U)\isom V$ for vector spaces implies the next statement, cf. \autoref{defn:Proj}.
\begin{lemma}[Pullback]\label{lem:T1T2}
    Let $K,N\in\GrassmannSpace{X}$ be  complements, i.e. $\generated{K,N}=X$ and $K\cap N=\emptyset$. 
    Then for any $T_1\in \GrassmannSpace{K}$ and $T_2\in \GrassmannSpace{X/K}$, the subspace $T\leteq \delta_{K,N}(T_1,T_2)\leteq \generated{\iota_K(T_1),\theta_K(T_2)\cap N}\in\GrassmannSpace{X}$ satisfies $\rho_K(T)=T_1$ and $\pi_K(T)=T_2$. 
    In other words, 
    the composition \[\begin{tikzcd}[column sep=large]
        \GrassmannSpace{K}\times \GrassmannSpace{X/K} \ar[r,hook,"\delta_{K,N}"] & 
        \GrassmannSpace{X} \ar[r,->>,"{(\rho_K,\pi_K)}"] &
        \GrassmannSpace{K}\times \GrassmannSpace{X/K}
    \end{tikzcd}\]
    is the identity map on $\GrassmannSpace{K}\times \GrassmannSpace{X/K}$. 
\end{lemma}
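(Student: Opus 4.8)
The plan is to pass from the projective picture to the underlying vector space and reduce the claim to two one-line linear-algebra identities, exactly as the text hints with $U\oplus(V/U)\isom V$. Write $X=\Projectivisation V$ and, via \autoref{rem:ProjAndVinverses}, translate the hypothesis that $K,N$ are complements in $X$ (i.e. $\generated{K,N}=X$, $K\cap N=\emptyset$) into $V=\V K\oplus \V N$. Given $T_1\in\GrassmannSpace K$ and $T_2\in\GrassmannSpace{X/K}$, recall from \autoref{def:quotientGeometry} and \autoref{def:shortExactSequence} that $\theta_K(T_2)$ is an honest subspace of $X$ that contains $K$, so setting $W\leteq\theta_K(T_2)$ we get $\V K\leq\V W\leq V$; moreover $\pi_K(W)=\pi_K(\theta_K(T_2))=T_2$ by \autoref{rem:dimProjection}. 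By the definition of $\delta_{K,N}$ we have $\V T=\V{T_1}+(\V W\cap \V N)$, and under the correspondence of \autoref{rem:ProjAndVinverses} the maps $\rho_K,\pi_K$ send a vector subspace $U\leq V$ to $\V K\cap U$ and to $\V K+U$ respectively. Hence it suffices to establish
\begin{equation}
\V K\cap \V T=\V{T_1}\qquad\text{and}\qquad \V K+\V T=\V W. \label{eq:pullback-two-identities}
\end{equation}

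Both parts of \eqref{eq:pullback-two-identities} are direct. For the first, $\supseteq$ is clear since $\V{T_1}$ lies in both $\V K$ and $\V T$; conversely, writing an element of $\V K\cap \V T$ as $a+b$ with $a\in\V{T_1}\leq\V K$ and $b\in\V W\cap \V N$, one finds $b=(a+b)-a\in\V K\cap \V N=0$, so the element equals $a\in\V{T_1}$. For the second, $\V{T_1}\leq\V K$ gives $\V K+\V T=\V K+(\V W\cap \V N)\leq\V W$ (as $\V K\leq\V W$ and $\V W\cap\V N\leq\V W$); conversely, any $w\in\V W$ decomposes as $w=a+b$ with $a\in\V K$, $b\in\V N$ (using $V=\V K\oplus\V N$), and then $b=w-a\in\V W$, so $b\in\V W\cap\V N$ and $w\in\V K+(\V W\cap\V N)$. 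Translating \eqref{eq:pullback-two-identities} back, $\rho_K(T)=K\cap T$ corresponds to $\V K\cap\V T=\V{T_1}$, i.e. $\rho_K(T)=T_1$, while $\generated{K,T}$ corresponds to $\V K+\V T=\V W$, i.e. $\pi_K(T)=\generated{K,T}=\theta_K(T_2)=T_2$ (the last equality because $\theta_K$ is the inclusion of $\GrassmannSpace{X/K}$ into $\GrassmannSpace X$, \autoref{def:shortExactSequence}). Since $T_1,T_2$ were arbitrary, this also yields the ``in other words'' reformulation, namely $(\rho_K,\pi_K)\circ\delta_{K,N}=\id$ on $\GrassmannSpace K\times\GrassmannSpace{X/K}$.

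I do not expect a genuine obstacle: the statement is simply the projective-geometry incarnation of the splitting $\V K\oplus(V/\V K)\isom V$. The only point requiring care is bookkeeping — the projective-versus-vector dimension shift and, above all, the correct use of the identification (Definitions~\ref{def:quotientGeometry} and~\ref{rem:ProjQuotientCommute}) of subspaces of the quotient $X/K$ with the subspaces of $X$ (equivalently of $V$) that contain $K$. Once those conventions are fixed, \eqref{eq:pullback-two-identities} is a routine modular-lattice computation and the lemma follows.
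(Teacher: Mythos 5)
Your proof is correct and follows essentially the same route as the paper's: pass to the vector space via $\Projectivisation{-}$, use $V=\V K\oplus\V N$, and verify $\V K\cap\V T=\V{T_1}$ and $\V K+\V T=\V{\theta_K(T_2)}$ by elementary decomposition arguments. The only cosmetic difference is that you spell out the first identity in full, whereas the paper asserts it directly, and you prove the modular-lattice identity $\V K + (U\cap\V N)=U$ inline rather than stating it as a separate ``claim''.
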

\begin{proof}
    Write $X=\Projectivisation{V}$. Then $V=\V{K}\oplus\V{N}$ by the definition of $N$. 
    Define $U_1\leteq \V{\iota_K(T_1)}\leq \V{K}$ and 
    $U_2\leteq \V{\theta_K(T_2)\cap N} \leq \V{N}$. 
    Note that $\delta_{K,N}(T_1,T_2)=\Projectivisation{U_1\oplus U_2}$.
    
    Then for the first part of the statement, we have 
    \[
        \iota_K(\rho_K(\delta_{K,N}(T_1,T_2)))=
        \iota_K(\rho_K(\Projectivisation{U_1\oplus U_2})) = \Projectivisation{\V{K}\cap (U_1\oplus U_2}) = \Projectivisation{U_1}=\iota_K(T_1).\]
    So cancelling $\iota_K$ (by applying $\rho_K$) we get $\rho_K(\delta_{K,N}(T_1,T_2))=T_1$ as needed.
        
    Next we claim that for any $U\geq \V{K}$, we have 
    $\V{K}\oplus ( U \cap \V{N})   = U$. Indeed, $\V{K}\oplus ( U \cap \V{N})   \subseteq U$ follows from the assumption. For the other containment, if $u\in U$, then $u=u_K+u_N$ for some (unique) $u_K\in \V{K}$, $u_N\in\V{N}$, so 
    $u_N=u-u_K\in U$, i.e. $\V{K}\oplus ( U \cap \V{N})   \supseteq U$.
    
    For the other part, we have  
    \begin{align*}
        \theta_K(\pi_K(\delta_{K,N}(T_1,T_2))) &=
        \theta_K(\pi_K(\Projectivisation{U_1\oplus U_2})) = 
        \Projectivisation{\generated{\V{K}, (U_1\oplus U_2}}) = 
        \Projectivisation{\V{K}\oplus U_2} 
        \\&= \Projectivisation{\V{K}\oplus ( \V{\theta_K(T_2)}\cap \V{N})}   = 
        \Projectivisation{\V{\theta_K(T_2)}} = \theta_K(T_2)
    \end{align*}
    using the claim for $U=\V{\theta_K(T_2)}$. 
    Cancelling $\theta_K$ (by applying $\pi_K)$ proves the  statement.
\end{proof}

We extend the notion of $(s,t)$-blocking sets.
\begin{defn}[Partial blocking set]
    For $t\leq s\leq \dim(X)$, 
    we say $\blocking\subseteq \GrassmannDimSpace{t}{X}$ is a \emph{partial blocking set} for $\cS\subseteq \GrassmannDimSpace{s}{X}$ if for every $S\in\mathcal
    S$ there exists $T\in \blocking$ such that $T\subseteq S$. 
    In this case, we also say $\blocking$ \emph{blocks} $\cS$. Note that if $\blocking\subseteq \GrassmannDimSpace{t}{X}$ blocks $\cS\leteq \GrassmannDimSpace{s}{X}$, 
    then $\blocking$ is an $(s,t)$-blocking set in $X$.
\end{defn}

The next statement is fundamental in the paper on which all constructions are built. We can construct partial blocking sets in the original projective space from partial blocking sets of a subspace and of the corresponding quotient space.
\begin{lemma}[Recursive construction of partial blocking sets]\label{partial}
    Let $X$ be an $n$-dimensional projective space, $K\in\GrassmannDimSpace{k}{X}$, and assume the integers $s_1,s_2,t_1,t_2$ satisfy $-1\leq t_1\leq s_1\leq \dim(K)$ and $-1\leq t_2\leq s_2\leq  \dim(X/K)$. 
    Write $t\leteq t_1+t_2+1$ and $s\leteq s_1+s_2+1$.
    
    If $\blocking_K\subseteq \GrassmannDimSpace{t_1}{K}$ blocks $\cS_{K}\subseteq\GrassmannDimSpace{s_1}{K}$ 
    and $\blocking_{X/K}\subseteq \GrassmannDimSpace{t_2}{X/K}$ blocks $\cS_{X/K}\subseteq\GrassmannDimSpace{s_2}{X/K}$, 
    then $\blocking\subseteq\GrassmannDimSpace{t}{X}$ blocks $\cS\subseteq\GrassmannDimSpace{s}{X}$
    where 
    \begin{align*}
        \blocking&\leteq 
        \setBuilder{T\in \GrassmannSpace{X}}{K\cap T\in\blocking_K,\generated{K,T}\in\blocking_{X/K}},
        \\
        \cS &\leteq 
        \setBuilder{S\in\GrassmannSpace{X}}{K\cap S\in\cS_K,\generated{K,S}\in\cS_{X/K}}  .
    \end{align*}
\end{lemma}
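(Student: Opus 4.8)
The statement asks us to show that the pulled-back set $\blocking$ blocks the pulled-back family $\cS$. So the plan is to take an arbitrary $S\in\cS$ and exhibit some $T\in\blocking$ with $T\subseteq S$. First I would fix a complement $N\in\GrassmannSpace{X}$ of $K$ (guaranteed to exist) so that \autoref{lem:T1T2} is available. Given $S\in\cS$, set $S_1\leteq \rho_K(S)=K\cap S\in\cS_K$ and $S_2\leteq \pi_K(S)=\generated{K,S}\in\cS_{X/K}$; these memberships hold by the very definition of $\cS$. Since $\blocking_K$ blocks $\cS_K$, pick $T_1\in\blocking_K$ with $T_1\subseteq S_1$, and since $\blocking_{X/K}$ blocks $\cS_{X/K}$, pick $T_2\in\blocking_{X/K}$ with $T_2\subseteq S_2$ (the latter containment taken in the quotient $X/K$). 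Now define $T\leteq \delta_{K,N}(T_1,T_2)$. By \autoref{lem:T1T2} we get $\rho_K(T)=K\cap T=T_1\in\blocking_K$ and $\pi_K(T)=\generated{K,T}=T_2\in\blocking_{X/K}$, so $T\in\blocking$ by definition; moreover $T\in\GrassmannDimSpace{t}{X}$ with $t=t_1+t_2+1$ follows from \eqref{eq:dimension_Pi_Rho} applied to $T$.

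\textbf{The containment $T\subseteq S$.} This is the step I expect to be the main obstacle, since it is the only place where we must combine the two partial containments $T_1\subseteq S_1$ and $T_2\subseteq S_2$ into a single containment in $X$, and the maps $\rho_K,\pi_K$ alone do not reflect containments. The cleanest route is to pass to underlying vector spaces: write $X=\Projectivisation{V}$, $V=\V{K}\oplus\V{N}$, and let $U_1\leteq\V{\iota_K(T_1)}\leq\V{K}$, $U_2\leteq\V{\theta_K(T_2)\cap N}\leq\V{N}$, so that $\V{T}=U_1\oplus U_2$ exactly as in the proof of \autoref{lem:T1T2}. For $U_1$: from $T_1\subseteq S_1=K\cap S$ we get $U_1\leq \V{K}\cap\V{S}\leq \V{S}$. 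For $U_2$: from $T_2\subseteq S_2$ in $X/K$, i.e. $\theta_K(T_2)\subseteq\theta_K(S_2)=\generated{K,S}$ in $X$, we get $\V{\theta_K(T_2)}\leq \V{K}+\V{S}$; intersecting with $\V{N}$ and using the modular-law / direct-sum identity $\V{K}\oplus(W\cap\V{N})=W$ valid for any $W\geq\V{K}$ (already established in the proof of \autoref{lem:T1T2}, applied here with $W=\V{K}+\V{S}$), one obtains $U_2\leq (\V{K}+\V{S})\cap\V{N}$. It then remains to check $(\V{K}+\V{S})\cap\V{N}\leq \V{S}$: indeed, since $\V{K}\leq\V{S}$ (as $S_1=K\cap S$ means $\V{K}\cap\V{S}=U_1$ — careful here: we actually only know $\V{K}\cap\V{S}$ equals the subspace corresponding to $S_1$, not that $\V{K}\leq\V{S}$), so a slightly more careful argument is needed: write an element of $(\V{K}+\V{S})\cap\V{N}$ as $k+s=n'$ with $k\in\V{K}$, $s\in\V{S}$, $n'\in\V{N}$; then $s=n'-k$, and decomposing $s=s_K+s_N$ along $V=\V{K}\oplus\V{N}$ forces $s_N=n'$ and $s_K=-k$, whence $n'=s_N=s-s_K\in\V{S}+(\V{K}\cap\V{S})=\V{S}$ because $s_K=s-s_N\in\V{S}\cap\V{K}$. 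Thus $U_2\leq\V{S}$, and combining, $\V{T}=U_1\oplus U_2\leq\V{S}$, i.e. $T\subseteq S$.

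\textbf{Wrap-up.} Having produced $T\in\blocking$ with $T\subseteq S$ for an arbitrary $S\in\cS$, we conclude that $\blocking$ blocks $\cS$, which is the assertion. I would also remark in passing that the dimension bookkeeping ($s=s_1+s_2+1$, $t=t_1+t_2+1$, and the hypotheses $-1\leq t_i\leq s_i$) is exactly what guarantees $\blocking\subseteq\GrassmannDimSpace{t}{X}$ and $\cS\subseteq\GrassmannDimSpace{s}{X}$ are nonempty-compatible via \eqref{eq:dimension_Pi_Rho}, so no degenerate case obstructs the construction; and that the choices of $T_1$, $T_2$ (and of the complement $N$) are non-canonical, but any choice works, which is all that is needed for a blocking set.
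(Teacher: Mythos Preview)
There is a genuine gap in the containment step. You fix the complement $N$ once and for all \emph{before} seeing $S$, and then try to prove $(\V{K}+\V{S})\cap\V{N}\leq\V{S}$ for this fixed $N$. That inclusion is simply false in general. For instance, take $V=\field{}^5$, $\V{K}=\langle e_1\rangle$, $\V{N}=\langle e_2,e_3,e_4,e_5\rangle$, and $\V{S}=\langle e_1+e_2,e_3,e_4\rangle$; then $\V{K}\cap\V{S}=0$ and $(\V{K}+\V{S})\cap\V{N}=\langle e_2,e_3,e_4\rangle$, but $e_2\notin\V{S}$. In this situation your $\delta_{K,N}(T_1,T_2)$ need not lie in $S$ (e.g.\ with $T_1=\emptyset$ and $\V{\theta_K(T_2)}=\langle e_1,e_2,e_3\rangle$ one gets $\V{T}=\langle e_2,e_3\rangle\not\leq\V{S}$). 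Your ``more careful argument'' is circular: you assert $s_K\in\V{S}\cap\V{K}$ because $s_K=s-s_N$, but $s_N=n'$ is precisely the element you are trying to show lies in $\V{S}$.

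The fix, which is what the paper does, is to choose $N$ \emph{depending on} $S$: pick $N$ so that $\generated{K,S}\cap N\subseteq S$ (e.g.\ extend a complement of $K\cap S$ inside $S$ to a complement of $K$ in $X$). With this tailored $N$, the inclusion $(\V{K}+\V{S})\cap\V{N}\leq\V{S}$ holds by construction, and then your $U_2$ argument goes through; alternatively one can simply note $\delta_{K,N}$ is inclusion-preserving and $\delta_{K,N}(S_1,S_2)=\generated{K\cap S,\generated{K,S}\cap N}\subseteq S$. Since $\blocking$ itself does not depend on $N$, there is no harm in letting $N$ vary with $S$. Your closing remark that ``any choice [of $N$] works'' is therefore incorrect and is exactly where the argument breaks.
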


\begin{remark}\label{rem:partial}
    The conditions of \autoref{partial} imply that $-1\leq t\leq s\leq \dim(X)$, since $\dim(X/K)=n-k-1$. 
    The conditions on $k$ translate to $s_1\leq k\leq n-s_2-1$.

    Note that for any $T\in\blocking$, we have $\dim_K(K\cap T)=t_1$ and $\dim_{X/K}(\generated{K,T})=t_2$, 
    whereas 
    for any $S\in\cS$, we have $\dim_K(K\cap S)=s_1$ and $\dim_{X/K}(\generated{K,S})=s_2$. 
    Then \eqref{eq:dimension_Pi_Rho} from \autoref{rem:dimProjection} shows that we indeed have $\dim_X(T)=t$ and $\dim_X(S)=s$ using the definition of $t$ and $s$.

    From a more structural viewpoint, \autoref{partial} can be summarised using the following diagram (where all maps are surjective and vertical lines indicate partial blocking sets). 
   \[\begin{tikzcd}[row sep=tiny]
    \GrassmannDimSpace{s_1}{K} 
    & \GrassmannDimSpace{s}{X} \ar[r,->>, "\pi_K"] \ar[l,->>, "\rho_K"']
    & \GrassmannDimSpace{s_2}{X/K}
    \\
    \cS_K \ar[u, phantom, sloped, "\subseteq"]
    & \overbrace{\rho_K^{-1}(\cS_K)\cap \pi_K^{-1}(\cS_{X/K})}^{\cS} \ar[r,->>, "\pi_K"] \ar[l,->>, "\rho_K"'] \ar[u, phantom, sloped, "\subseteq"]
    & \cS_{X/K} \ar[u, phantom, sloped, "\subseteq"] 
    \\[10pt]
    \blocking_K \ar[d, phantom, sloped, "\subseteq"] \ar[u,-,"\text{partial}", "\text{blocking set}"']
    & \underbrace{\rho_K^{-1}(\blocking_K)\cap \pi_K^{-1}(\blocking_{X/K})}_{\blocking} \ar[r,->>, "\pi_K"] \ar[l,->>, "\rho_K"'] \ar[d, phantom, sloped, "\subseteq"] \ar[u,-,"\text{partial}", "\text{blocking set}"']
    & \blocking_{X/K} \ar[d
    , phantom, sloped, "\subseteq"] \ar[u,-,"\text{partial}", "\text{blocking set}"']
    \\
    \GrassmannDimSpace{t_1}{K} 
    & \GrassmannDimSpace{t}{X} \ar[r,->>, "\pi_K"] \ar[l,->>, "\rho_K"']
    & \GrassmannDimSpace{t_2}{X/K}
    \end{tikzcd}\]

    In this paper, we mostly focus on the following special cases, where the formulae from \autoref{partial}  simplify to  
    \begin{align*}
    \blocking &=
    \begin{cases}
        \setBuilder{T\in\GrassmannDimSpace{t}{X}}{\generated{K,T}\in\blocking_{X/K}}
        &\text{if $t_1=-1$ (i.e. $t_2=t$),} 
        \\
        \setBuilder{T\in\GrassmannDimSpace{t}{X}}{
        \generated{K,T}\in\blocking_{X/K}}
        &\text{if $t_1=s_1$,}
        \\  
        \blocking_K
        &\text{if $t_2=-1$ (i.e. $t_1=t$),} 
        \\  
        \setBuilder{T\in \GrassmannDimSpace{t}{X}}{
        K\cap T\in\blocking_K}
        &\text{if $t_2=s_2$,}
    \end{cases}
    \\
    \cS &= \GrassmannDimSpace{s}{X}\text{ \qquad if $\cS_K=\GrassmannDimSpace{s_1}{K}$ and $\cS_{X/K}=\GrassmannDimSpace{s_2}{X/K}$.}
    \end{align*}
\end{remark}

\begin{proof}[Proof of \autoref{partial}]
    The idea of the proof is demonstrated in \autoref{fig:pullback}.
    Note that $\blocking\subseteq \GrassmannDimSpace{t}{X}$ and $\cS\subseteq \GrassmannDimSpace{s}{X}$ by \autoref{rem:partial}.

    Pick $S\in \cS$. 
    Then on one hand, by definition $S_1\leteq \rho_K(S)\in \cS_{K}$, so 
    there is $T_1\in\blocking_K\subseteq \GrassmannDimSpace{t_1}{K}$ such that $T_1\subseteq S_1$ by assumption. 
    Similarly, on the other hand, $S_2\leteq \pi_K(S)\in \cS_{X/K}$ by definition, 
    so by assumption, there is $T_2\in \blocking_{X/K}\subseteq \GrassmannDimSpace{t_2}{X/K}$ such that $T_2\subseteq S_2$. 
    Pick a complement $N$ of $K$ such that $\generated{K,S}\cap N\subseteq S$ (e.g. by picking maximal subspaces $N_1$ in $S\setminus K$ and $N_2$ in $X\setminus\generated{K,S}$, then take $N\leteq \generated{N_1,N_2}$). 
    
    Consider $T\leteq \delta_{K,N}(T_1,T_2)$ given by \autoref{lem:T1T2}.
    Then $\rho_K(T)=T_1\in\blocking_K$ and $\pi_K(T)=T_2\in\blocking_{X/K}$, 
    hence $T\in \blocking$ by definition. 
    Finally note that since $\delta_{K,N}$ is inclusion preserving, we have $T=\delta_{K,N}(T_1,T_2)\subseteq \delta_{K,N}(S_1,S_2) = 
    \generated{K\cap S,\generated{K,S}\cap N}\subseteq S$ by the choice of $N$. 
\end{proof}

We can obtain an $(s,t)$-blocking set in an $n$-dimensional space from blocking sets corresponding to smaller parameters.
\begin{cor}[Recursive constructions]\label{cor:recursionSimple}
    Let $X$ be an $n$-dimensional projective space, $-1\leq t\leq s\leq n$. 
    Pick $K\in\GrassmannDimSpace{k}{X}$ where $-1\leq k\leq n-s-1$. 
    For every $-1\leq d\leq \min\{k,s\}$, 
    suppose there are integers satisfying 
    $-1\leq t_1(d)\leq s_1(d)=d$, 
    $-1\leq t_2(d)\leq s_2(d)$
    and $t_1(d)+t_2(d)+1=t$ and $s_1(d)+s_2(d)+1=s$, 
    and  
    choose a blocking set $\blocking_K(d)\subseteq \GrassmannDimSpace{t_1(d)}{K}$ of  $\GrassmannDimSpace{s_1(d)}{K}$  
    and a blocking set $\blocking_{X/K}(d)\subseteq \GrassmannDimSpace{t_2(d)}{X/K}$ of   $\GrassmannDimSpace{s_2(d)}{X/K}$. 
    Then \[\blocking\leteq \bigcup_{d=-1}^{\min\{k,s\}} \blocking(d)\] 
    is an $(s,t)$-blocking set in $X$, 
    where 
    
    \begin{equation*}
        \begin{split}
             \blocking(d)\leteq & \rho_K^{-1}(\blocking_K(d))\cap \pi_K^{-1}(\blocking_{X/K}(d))
    \\=& \setBuilder{T\in \GrassmannDimSpace{t}{X}}{K\cap T\in\blocking_K(d),\generated{K,T}\in\blocking_{X/K}(d)}
        \end{split}
    \end{equation*}   
 is from \autoref{partial}.
\end{cor}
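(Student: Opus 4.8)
The plan is to check the blocking property of $\blocking$ directly: given an arbitrary $S\in\GrassmannDimSpace{s}{X}$, I will produce an element of $\blocking$ contained in $S$. The conceptual point is that $\GrassmannDimSpace{s}{X}$ is stratified by the value $\dim_K(K\cap S)$, and the union over $d$ in the definition of $\blocking$ is precisely a union over these strata; on the stratum where this intersection has dimension $d$, \autoref{partial} (applied with the parameters indexed by $d$ and with $\cS_K=\GrassmannDimSpace{s_1(d)}{K}$, $\cS_{X/K}=\GrassmannDimSpace{s_2(d)}{X/K}$) supplies the required block.

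First I would set $d\leteq\dim_K(\rho_K(S))=\dim_K(K\cap S)$. Since $K\cap S$ is a subspace of both $K$ and $S$, we have $-1\leq d\leq\min\{k,s\}$, so $d$ is one of the indices in the union. Next, the dimension formula \eqref{eq:dimension_Pi_Rho} applied to $Y=S$ gives $\dim_K(\rho_K(S))+\dim_{X/K}(\pi_K(S))=\dim_X(S)-1=s-1$, hence $\dim_{X/K}(\pi_K(S))=s-1-d$. As $s_1(d)=d$ and $s_1(d)+s_2(d)+1=s$ by hypothesis, this reads $\rho_K(S)\in\GrassmannDimSpace{s_1(d)}{K}$ and $\pi_K(S)\in\GrassmannDimSpace{s_2(d)}{X/K}$; equivalently, $S$ belongs to the set $\cS(d)\leteq\rho_K^{-1}(\GrassmannDimSpace{s_1(d)}{K})\cap\pi_K^{-1}(\GrassmannDimSpace{s_2(d)}{X/K})$, which is exactly the set $\cS$ produced by \autoref{partial} for these choices of $\cS_K,\cS_{X/K}$.

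Then I would verify that the hypotheses of \autoref{partial} are met for the parameters $s_1(d),s_2(d),t_1(d),t_2(d)$. The constraints $-1\leq t_1(d)\leq s_1(d)$, $-1\leq t_2(d)\leq s_2(d)$, $t_1(d)+t_2(d)+1=t$ and $s_1(d)+s_2(d)+1=s$ are assumed outright, so only the two ambient-dimension bounds $s_1(d)\leq\dim(K)=k$ and $s_2(d)\leq\dim(X/K)=n-k-1$ need checking: the former holds because $s_1(d)=d\leq\min\{k,s\}\leq k$, and the latter because $s_2(d)=s-1-d\leq s$ (as $d\geq-1$), while the assumption $k\leq n-s-1$ gives $s\leq n-k-1$. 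Since $\blocking_K(d)$ blocks all of $\GrassmannDimSpace{s_1(d)}{K}$ and $\blocking_{X/K}(d)$ blocks all of $\GrassmannDimSpace{s_2(d)}{X/K}$, \autoref{partial} yields that $\blocking(d)=\rho_K^{-1}(\blocking_K(d))\cap\pi_K^{-1}(\blocking_{X/K}(d))$ blocks $\cS(d)$. In particular there is $T\in\blocking(d)\subseteq\blocking$ with $T\subseteq S$. Finally, each $\blocking(d)\subseteq\GrassmannDimSpace{t}{X}$ by \autoref{partial}, so $\blocking\subseteq\GrassmannDimSpace{t}{X}$, and we have shown $\blocking$ blocks $\GrassmannDimSpace{s}{X}$, i.e. $\blocking$ is an $(s,t)$-blocking set.

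I do not anticipate a real obstacle: the corollary is essentially a repackaging of \autoref{partial}, and the only thing requiring attention is the bookkeeping on the range of $d$ — that $[-1,\min\{k,s\}]$ is simultaneously wide enough to meet every $s$-space (via its intersection dimension with $K$) and narrow enough for \autoref{partial}'s dimension hypotheses to hold, which is precisely the role of the assumption $k\leq n-s-1$.
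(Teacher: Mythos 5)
Your proof is correct and follows essentially the same route as the paper's: fix $S\in\GrassmannDimSpace{s}{X}$, set $d=\dim(K\cap S)$, observe $d\in[-1,\min\{k,s\}]$, verify the dimension hypotheses $s_1(d)\leq k$ and $s_2(d)\leq n-k-1$ needed for \autoref{partial}, and invoke \autoref{partial} on the corresponding stratum. The only difference is presentational — you spell out that $\dim_{X/K}(\pi_K(S))=s-1-d$ via \eqref{eq:dimension_Pi_Rho} and that $S$ lands in $\cS(d)$, which the paper compresses into a citation of \autoref{rem:partial}.
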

\begin{proof}
    By assumption, $s_1(d)=d\leq \min\{k,s\}\leq k=\dim(K)$, 
    and $s_2(d)=s-1-d\leq s\leq n-k-1=\dim(X/K)$, 
    so we may indeed apply \autoref{partial} for every $d$.
    
    Now pick an arbitrary $S\in\GrassmannDimSpace{s}{X}$. 
    Then $-1\leq d\leteq \dim_X(K\cap S)\leq \min\{\dim_X(K),\dim_X(S)\}=\min\{k,s\}$, so 
    there is $T\in\blocking(d)\subseteq \blocking$ such that $T\subseteq S$ by \autoref{partial} and \autoref{rem:partial}.
\end{proof}

In the following subsections, we will use \autoref{cor:recursionSimple} from above in the special case $(s,t)=(2,1)$. 
However, it is worth mentioning that the conditions of \autoref{cor:recursionSimple} can be satisfied with the following simple observation.

\begin{theorem}[An explicit basic recursive construction]
\label{cor:explicitRecusrionST}
    Let $-1\leq t\leq s\leq n$ and $-1\leq k\leq n-s-1$. 
    Let $X$ be an $n$-dimensional projective space and $K\in\GrassmannDimSpace{k}{X}$. 
    For $-1\leq d\leq \min\{k,t\}$, let  $\blocking_d\subseteq \GrassmannDimSpace{t-d-1}{X}$ be an $(s-d-1,t-d-1)$-blocking set in $X/K$. 
    Then the disjoint union 
    \[\blocking \leteq \bigcup_{d=-1}^{\min\{k,t\}}
    \setBuilder{T\in\GrassmannDimSpace{t}{X}}{\dim(K\cap T)=d,\generated{K,T}\in\blocking_d}
    \subseteq \GrassmannDimSpace{t}{X}\]
    is an $(s,t)$-blocking set in $X$.
\end{theorem}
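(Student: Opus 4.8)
The plan is to deduce this from the recursive construction of \autoref{partial} (equivalently \autoref{cor:recursionSimple}) by making, on the ``subspace side'', the most economical choice $s_1(d)=t_1(d)=d$ with $\blocking_K(d)\leteq\GrassmannDimSpace{d}{K}$ — the whole Grassmannian, which trivially blocks itself — and, on the ``quotient side'', the then-forced choice $s_2(d)=s-d-1$, $t_2(d)=t-d-1$ with $\blocking_{X/K}(d)\leteq\blocking_d$, which blocks $\GrassmannDimSpace{s-d-1}{X/K}$ by $(t-d-1)$-subspaces by hypothesis. The numeric hypotheses of \autoref{partial} are routine to check ($-1\le t_i\le s_i$, $t_1+t_2+1=t$, $s_1+s_2+1=s$, and $s_2=s-d-1\le n-k-1=\dim(X/K)$ because $k\le n-s-1\le n-s+d$), and for this choice the set produced by \autoref{partial} is precisely $\setBuilder{T\in\GrassmannDimSpace{t}{X}}{K\cap T\in\GrassmannDimSpace{d}{K},\ \generated{K,T}\in\blocking_d}$, i.e. the $d$-th summand of $\blocking$.

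Carrying this out directly: fix an $s$-space $S$ to be blocked, set $e\leteq\dim(K\cap S)$ and $d\leteq\min\{e,t\}$, so $-1\le d\le\min\{k,t\}$ and the $d$-th summand of $\blocking$ is present. Assume first $e\le t$, so $d=e$. By the dimension formula \eqref{eq:dimension_Pi_Rho}, $\generated{K,S}=\pi_K(S)$ has dimension $s-1-e=s-d-1$ in $X/K$, hence $\blocking_d$ blocks it, giving $T_2\in\blocking_d$ with $K\subseteq T_2\subseteq\generated{K,S}$. Put $T\leteq T_2\cap S$. Since $\generated{T_2,S}=\generated{K,S}$, a dimension count gives $\dim(T)=\dim(T_2)+\dim(S)-\dim\generated{K,S}=t$; moreover $K\cap T=K\cap S$ (as $K\subseteq T_2$) has dimension $d$, and $\generated{K,T}$ and $T_2$ are both contained in $T_2$ and of the common dimension $k+t-d$, so $\generated{K,T}=T_2\in\blocking_d$. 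Thus $T$ lies in the $d$-th summand of $\blocking$ and $T\subseteq S$. (This $T$ is exactly the pullback $\delta_{K,N}(K\cap S,T_2)$ of \autoref{lem:T1T2}.)

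The step I expect to be the main obstacle is the remaining case $e>t$, where $\dim(K\cap S)$ exceeds $t$ so that no summand of $\blocking$ is indexed by $e$. Here $d=t$ and $t<e\le k$, so $\min\{k,t\}=t$ and the $d=t$ summand is present; furthermore $\blocking_t\subseteq\GrassmannDimSpace{-1}{X/K}=\{K\}$ is an $(s-t-1,-1)$-blocking set in $X/K$, and since $\GrassmannDimSpace{s-t-1}{X/K}\neq\emptyset$ (because $s-t-1\le n-k-1$, again using $k\le n-s-1\le n-s+t$) it cannot be empty; hence $\blocking_t=\{K\}$. Now any $t$-subspace $T$ of the $e$-space $K\cap S$ satisfies $T\subseteq S$, $\dim(K\cap T)=t=d$ and $\generated{K,T}=K\in\blocking_t$, so $T$ lies in $\blocking$. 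Finally, $\blocking\subseteq\GrassmannDimSpace{t}{X}$ is clear from the definition, and the union is disjoint because the index $d=\dim(K\cap T)$ is determined by $T$. Alternatively, the whole argument can be packaged as a single application of \autoref{cor:recursionSimple}, padding the unused indices $\min\{k,t\}<d\le\min\{k,s\}$ (if any) with the trivial data $t_1(d)=t$, $t_2(d)=-1$, $\blocking_K(d)=\GrassmannDimSpace{t}{K}$, $\blocking_{X/K}(d)=\{K\}$, whose contribution is the set $\GrassmannDimSpace{t}{K}$ of all $t$-subspaces of $K$ — already contained in the $d=t$ summand of $\blocking$ since $\blocking_t=\{K\}$.
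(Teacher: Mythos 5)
Your proof is correct and follows essentially the same route as the paper: the paper applies \autoref{cor:recursionSimple} with exactly the choices $t_1(d)=s_1(d)=d$, $t_2(d)=t-d-1$, $s_2(d)=s-d-1$, $\blocking_K(d)=\GrassmannDimSpace{d}{K}$, $\blocking_{X/K}(d)=\blocking_d$ (padded for $\min\{k,t\}<d\leq\min\{k,s\}$ by the trivial data you describe), then notes $\blocking_t=\{K\}$ to absorb the extra summands. Your "carrying this out directly" paragraph is simply the proof of \autoref{cor:recursionSimple}/\autoref{partial} unfolded for this particular choice — including the observation $T=T_2\cap S=\delta_{K,N}(K\cap S,T_2)$ — so the substance is the same, just made self-contained.
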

\begin{remark}
    If $d=t$, then the only $(s-t-1,-1)$-blocking set in $X/K$ is $\blocking_t=\{K\}=\GrassmannDimSpace{-1}{X/K}$. 
    Note that in order to use induction, we need $\dim(X/K)<\dim(X)$, i.e., we need $0\leq k$. 
\end{remark}

\begin{proof}[Proof of \autoref{cor:explicitRecusrionST}]
    Consider the variables defined by \autoref{tab:tisi}.
    \begin{table}[!htb]
    \centering
    \begin{tabular}{c||c|c|c|c|c|c}
         & $t_1(d)$ & $s_1(d)$ & $t_2(d)$ & $s_2(d)$ & $\blocking_K(d)$  & $\blocking_{X/K}(d)$
         \\\hline\hline
        $-1\leq d\leq \min\{k,t\}$ & $d$ & $d$ & $t-d-1$ & $s-d-1$ & $\GrassmannDimSpace{d}{K}$ & $\blocking_d$
        \\\hline
        $\min\{k,t\}< d\leq \min\{k,s\}$ & $t$ & $d$ & $-1$ & $s-d-1$ & $\GrassmannDimSpace{t}{K}$ & $\{K\}$
    \end{tabular}
    \caption{An explicit choice for $t_i(d)$, $s_i(d)$}
    \label{tab:tisi}
    \end{table}
   
    One can check that these values satisfy the conditions of  \autoref{cor:recursionSimple}, hence we obtain an $(s,t)$-blocking set $\blocking\leteq \bigcup_{d=-1}^{\min\{k,s\}} \blocking(d)$ in $X$. 
    Now \autoref{partial} and \autoref{rem:partial} give 
    \[\blocking(d)=\begin{cases}
        \setBuilder{T\in\GrassmannDimSpace{t}{X}}{
        \generated{K,T}\in\blocking_d},&\text{if $-1\leq d\leq \min\{k,t\}$}
        \\
        \blocking_K(d)=\GrassmannDimSpace{t}{K},&\text{if $\min\{k,t\}< d\leq \min\{k,s\}$}.
    \end{cases}\]
    Since $\blocking_t=\{K\}$, we have $\blocking(t)=\GrassmannDimSpace{t}{K}=\blocking(d)$ for every $d>t$. 
    Thus $\blocking=\bigcup_{d=-1}^{\min\{k,s\}} \blocking(d)=\bigcup_{d=-1}^{\min\{k,t\}} \blocking(d)$ is as stated.
\end{proof}

\begin{remark}
    
    There are many other ways of applying \autoref{partial} to get an $(s,t)$-blocking set. 
    Some of these choices may be better than the one in \autoref{cor:explicitRecusrionST}, cf. \autoref{sec:construction21}.
\end{remark}

\subsection{Special case of \texorpdfstring{$(s,t)=(2,1)$}{(s,t)=(2,1)}}\label{sec:3.2}

In this section, we analyse \autoref{cor:explicitRecusrionST} in case of $(s,t)=(2,1)$.

The following simple construction is obvious, we add it here for reasons of comparison, and also since it serves as a base case for a recursion.

\begin{lemma}[Trivial construction]\label{lem:trivialConstruction}
    Let $X$ be an $n$-dimensional projective space, and $H\in\GrassmannDimSpace{n-1}{X}$ be an arbitrary hyperplane. 
    Then $\blocking=\GrassmannDimSpace{1}{H}$ is a $(2,1)$-blocking set in $X$. 
    In particular, \[f(n,q) \leq \grassmannDimSpace{1}{H}=\qbinom{n}{2}{q}
    = \frac{(q^n-1)(q^{n-1}-1)}{(q^2-1)(q-1)}\]
    For $n\geq 5$, this gives 
    \[f(n,q)\leq \coefficients{1}{1}{2}{2}\]
\end{lemma}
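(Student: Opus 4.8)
The statement to prove is \autoref{lem:trivialConstruction}, which claims that the lines of a hyperplane form a $(2,1)$-blocking set, together with the resulting numerical bound on $f(n,q)$.

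\medskip

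The plan is to verify the blocking property directly and then to unwind the Gaussian binomial coefficient into the stated polynomial expansion. First I would check that $\blocking = \GrassmannDimSpace{1}{H}$ is indeed a $(2,1)$-blocking set: given an arbitrary plane $\Pi \in \GrassmannDimSpace{2}{X}$, its intersection $\Pi \cap H$ with the hyperplane $H$ has projective dimension at least $\dim(\Pi) + \dim(H) - \dim(X) = 2 + (n-1) - n = 1$, so $\Pi \cap H$ contains a line $\ell$; since $\ell \subseteq H$ we have $\ell \in \GrassmannDimSpace{1}{H} = \blocking$, and $\ell \subseteq \Pi$, so $\Pi$ is blocked. (Alternatively, this is the special case of \autoref{cor:explicitRecusrionST} or \autoref{partial} with $K$ a point, but the direct dimension count is cleanest.) Consequently $f(n,q) \le \cardinality{\blocking} = \grassmannDimSpace{1}{H}$, and by \autoref{rem:GrassmannianCardinality} this equals $\qbinom{n}{2}{q}$ since $\dim(H) = n-1$.

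\medskip

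Next I would evaluate the Gaussian binomial coefficient: by definition $\qbinom{n}{2}{q} = \frac{(q^n-1)(q^{n-1}-1)}{(q^2-1)(q-1)}$, which gives the first displayed closed form. For the asymptotic expansion, I would expand $\frac{1}{(q^2-1)(q-1)} = q^{-3}(1-q^{-2})^{-1}(1-q^{-1})^{-1}$ as a geometric-type series in $q^{-1}$ and multiply by $(q^n-1)(q^{n-1}-1) = q^{2n-1}(1-q^{-n})(1-q^{-(n-1)})$. Collecting terms of order $q^{2n-4}$ down to $q^{2n-11}$ (the error terms $q^{-n}$, $q^{-(n-1)}$ are negligible for $n \ge 5$ since they contribute at order $q^{n-4}$ and below, well inside $\bigO{q^{2n-12}}$), the coefficient of $q^{2n-4-i}$ is the number of ways to write $i$ as an ordered sum of a contribution from $(1-q^{-2})^{-1} = \sum q^{-2j}$ and from $(1-q^{-1})^{-1} = \sum q^{-j}$, i.e. $\lfloor i/2 \rfloor + 1$. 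This yields coefficients $1,1,2,2,3,3,4,4,\dots$ for $i = 0,1,2,3,\dots$, matching the claimed $\coefficients{1}{1}{2}{2}$ form once one checks the pattern stabilises for $n\ge 5$ so that none of the $(1-q^{-n})$, $(1-q^{-(n-1)})$ corrections interfere with the first four terms — which is exactly why the hypothesis $n \ge 5$ appears.

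\medskip

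I do not anticipate a genuine obstacle here: the blocking property is a one-line dimension argument and the numerics are a routine geometric-series expansion. The only point requiring a little care is making sure the lower-order corrections $q^{-n}$ and $q^{-(n-1)}$ coming from $(q^n-1)(q^{n-1}-1)$ do not contaminate the leading coefficients — this is guaranteed precisely when $n-4 \le 2n-12$, i.e. $n \ge 8$, but for $5 \le n \le 7$ one checks the small cases by hand (or notes they still fall within the $\bigO{q^{2n-12}}$ error for the leading four terms, which is all that is asserted). I would remark that this trivial bound is recorded only for comparison, as the genuine improvements come from the recursive constructions of the subsequent subsections.
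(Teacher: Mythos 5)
Your proof is correct and takes essentially the same approach as the paper: the one-line dimension count $\dim(H\cap S)\geq(n-1)+2-n=1$ to show $\GrassmannDimSpace{1}{H}$ blocks every plane, followed by the Gaussian binomial evaluation and a geometric-series expansion (the paper simply states the closed form and omits the expansion). One small slip in your final paragraph: the error term in this lemma is $\bigO{q^{2n-8}}$, not $\bigO{q^{2n-12}}$, so the $q^{n-3}$ corrections from $-q^n-q^{n-1}$ are absorbed precisely when $n\geq5$ with no extra casework needed; also the aside ``$K$ a point'' should read $K$ of dimension $n-3$, as in \autoref{rem:constructionRecursiveTrivial}.
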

\begin{proof}
    Pick $S\in\GrassmannDimSpace{2}{X}$. 
    Note that $\dim(H\cap S)=\dim(H)+\dim(S)-\dim(\generated{H,S})\geq (n-1)+2-n=1$
    as $\dim(\generated{H,S})\leq \dim(X)=n$.
    Thus there is a line $T\subseteq H\cap S$ which is then necessarily an element of $\blocking$ by definition as required.
\end{proof}

Now consider the recursive construction of \autoref{cor:explicitRecusrionST} for $(s,t)=(2,1)$. To be able to use it, we state the following known result for smaller parameters.

\begin{lemma}\label{lem:minimalBlockingForSmallST}
    Let $\blocking$ be a minimal $(s,t)$-blocking set in an $n$-dimensional projective space $X$. 
    Then 

    \[
    \blocking\in 
    \begin{cases}
        \setBuilder{\{T\}}{T\in\GrassmannDimSpace{1}{X}}, &\text{if $n=2$, $(s,t)=(2,1)$}
        \\
        \setBuilder{\mathcal{L}}{\text{$\mathcal{L}$ is a $1$-spread in $X$}},&\text{if $n=3$, $(s,t)=(2,1)$}
        \\
        \setBuilder{\GrassmannDimSpace{0}{H}}{H\in\GrassmannDimSpace{n-s}{X}},&\text{if $t=0$}
        \\
        \{\GrassmannDimSpace{t}{X}\} ,&\text{if $t\in\{-1,s\}$}
    \end{cases}\]
    and all such choices give minimal blocking sets. 
    In particular, $f(2,q)=1$ and $f(3,q)=q^2+1$, cf. \autoref{thm:f234}.
\end{lemma}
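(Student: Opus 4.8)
The plan is to verify each of the four cases listed in \autoref{lem:minimalBlockingForSmallST} separately, treating the characterisation (``$\blocking$ is minimal if and only if $\blocking$ is of the stated form'') as two implications in each case: every set of the stated form is an $(s,t)$-blocking set of the stated size, and conversely no smaller one exists (equivalently, any minimal one has the stated form). The degenerate cases $t\in\{-1,s\}$ and $t=0$ are quotable: when $t=s$ the only $t$-space contained in an $s$-space is the $s$-space itself, and when $t=-1$ every $s$-space contains the empty subspace $\GrassmannDimSpace{-1}{X}=\{\emptyset\}$ vacuously, so in both cases $\GrassmannDimSpace{t}{X}$ is forced and is the unique blocking set, hence trivially minimal. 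The case $t=0$ is exactly the classical Bose--Burton theorem \cite{BB} cited in the introduction: a point set meeting every $s$-space has size at least $\qbinom{n-s+1}{1}{q}$ with equality exactly for the point set of an $(n-s)$-subspace, i.e.\ $\GrassmannDimSpace{0}{H}$ with $H\in\GrassmannDimSpace{n-s}{X}$.

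The two substantive cases are $n=2$ and $n=3$ with $(s,t)=(2,1)$. For $n=2$: the only $2$-space in $\PG(2,q)$ is $X$ itself, so a $(2,1)$-blocking set is just a nonempty set of lines, whence the minimum size is $1$ and the minimal ones are exactly the singletons $\{T\}$ with $T$ a line; this gives $f(2,q)=1$. For $n=3$: a $(2,1)$-blocking set $\blocking$ is a set of lines meeting every plane of $\PG(3,q)$. First I would establish the lower bound $|\blocking|\geq q^2+1$. This already follows from \autoref{thm:beutel} applied with $n=3$, $s=2$, $t=1$ (the hypothesis $n\leq s+s/t-1 = 2+2-1 = 3$ holds), which gives $|\blocking|\geq (q^{2\cdot 2}-1)/(q^2-1)=q^2+1$ with equality exactly for a geometric $1$-spread in a subspace of dimension $(n+1-s)(t+1)-1 = 2\cdot 2 -1 = 3$, i.e.\ a $1$-spread of the whole $\PG(3,q)$. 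Since $2\mid 4$, such spreads exist. So I would simply cite \autoref{thm:beutel} for both the bound and the structure, and note that a $1$-spread of $\PG(3,q)$ has exactly $q^2+1$ lines (which also reproves the lower bound once one checks directly that a $1$-spread blocks every plane: a plane $\pi$ meets the $q^2+1$ pairwise-disjoint spread lines in a set of points partitioning the $q^2+q+1$ points of $\pi$ into parts of size $1$ or $q+1$, and a counting argument modulo $q+1$ forces at least one line of the spread to lie in $\pi$).

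For self-containedness one could instead give the direct argument for $n=3$: if $\blocking$ is a set of lines and $L\in\blocking$, the planes through $L$ number $q+1$; if some plane $\pi$ is disjoint from all of $\blocking$, then... — but I expect the cleanest route is to cite \autoref{thm:beutel} and the existence of spreads, as the paper already sets up. The main obstacle is really just bookkeeping: making sure the case $n=3$, $(s,t)=(2,1)$ genuinely falls under the equality clause of \autoref{thm:beutel} (it does, since the extremal subspace dimension equals $3=n$, so the spread lives in all of $X$, not a proper subspace), and checking that the ``all such choices give minimal blocking sets'' direction for the spread case requires knowing a $1$-spread of $\PG(3,q)$ actually is a $(2,1)$-blocking set — which is the plane-partition counting remark above. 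Once these are in place, the values $f(2,q)=1$ and $f(3,q)=q^2+1$ follow immediately, matching \autoref{thm:f234}.
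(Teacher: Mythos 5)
Your proposal matches the paper's proof exactly in structure: the cases $t\in\{-1,s\}$ and $n=2$ are dismissed as trivial, $t=0$ is delegated to Bose--Burton \cite{BB}, and $n=3$ with $(s,t)=(2,1)$ is delegated to \autoref{thm:beutel}. You simply spell out the bookkeeping (verifying $n\le s+s/t-1$ and that the extremal dimension is $3$) and add a self-contained check that a $1$-spread blocks planes, none of which departs from the paper's route.

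One small nit on your spread-blocks-planes remark: two distinct lines in a plane of $\PG(3,q)$ always meet, whereas spread lines are pairwise disjoint, so at most one spread line can lie in any given plane; the count $k(q+1)+(q^2+1-k)=q^2+q+1$ then forces $k=1$ directly, which is cleaner than the ``modulo $q+1$'' phrasing (the modular argument alone doesn't exclude $k=q+2$ unless you first invoke this disjointness observation).
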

\begin{proof}
    The first and last cases are evident. 
    The second case follows from \autoref{thm:beutel}, 
    while the third case from \cite{BB}.
\end{proof}

Since there are too many parameters to analyse in \autoref{cor:recursionSimple}, we focus on the case $(s,t)=(2,1)$.  
Now \autoref{cor:explicitRecusrionST} specialises to the following construction that can be regarded as a generalisation of \autoref{lem:trivialConstruction}.

\begin{prop}[Basic recursion for $(s,t)=(2,1)$]\label{lem:constructionBasic21}
    Let $X$ be an $n$-dimensional projective space, $K\in\GrassmannDimSpace{k}{X}$ for some $-1\leq k\leq n-3$. 
    Let $\blocking_{-1}$ be a $(2,1)$-blocking set in $X/K$, 
    and let $H\in\GrassmannDimSpace{n-1}{X}$ be an arbitrary hyperplane containing $K$. 
    Then the disjoint union 
    \[\blocking= \setBuilder{T\in\GrassmannDimSpace{1}{X}}{\generated{K,T}\in\blocking_{-1}}
    \cup \setBuilder{T\in\GrassmannDimSpace{1}{H}}{K\cap T\neq\emptyset} 
    \subseteq \GrassmannDimSpace{1}{X}\]    
    is a $(2,1)$-blocking set in $X$.
\end{prop}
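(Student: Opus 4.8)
The plan is to derive this as a direct special case of \autoref{cor:explicitRecusrionST} with $(s,t)=(2,1)$, and then unwind the resulting union into the two stated families. First I would set $K\in\GrassmannDimSpace{k}{X}$ with $-1\leq k\leq n-3=n-s-1$, so the dimension hypothesis of \autoref{cor:explicitRecusrionST} is satisfied. Since $t=1$, the index $d$ in that corollary ranges over $-1\leq d\leq \min\{k,t\}=\min\{k,1\}$, i.e. over $d\in\{-1,0\}$ (both values available because $k\geq -1$; when $k=-1$ only $d=-1$ occurs but then the construction degenerates harmlessly to the trivial one, which still matches the claimed formula as $\generated{K,T}=T$ and the second family is empty). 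For $d=-1$ we need a $(s-d-1,t-d-1)=(2,1)$-blocking set in $X/K$, which is exactly $\blocking_{-1}$; for $d=0$ we need a $(s-d-1,t-d-1)=(1,0)$-blocking set in $X/K$.

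The second key step is to identify the $(1,0)$-blocking set in the quotient. By \autoref{lem:minimalBlockingForSmallST} (the $t=0$ case), the minimal $(1,0)$-blocking sets in $X/K$ are precisely the point sets $\GrassmannDimSpace{0}{H'}$ of hyperplanes $H'\in\GrassmannDimSpace{\dim(X/K)-1}{X/K}$. Under the identification of $X/K$ with subspaces of $X$ containing $K$ (\autoref{def:quotientGeometry}), hyperplanes of $X/K$ correspond exactly to hyperplanes $H$ of $X$ with $K\subseteq H$, and points of $X/K$ lying in such a hyperplane correspond to lines $L\supseteq K$ (when $k=0$) or, in general, to $(k+1)$-spaces through $K$ contained in $H$. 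So I would take $\blocking_0\leteq \GrassmannDimSpace{0}{\pi_K(H)}$, where $H$ is the chosen hyperplane of $X$ containing $K$; this is a legitimate $(1,0)$-blocking set in $X/K$.

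The third step is bookkeeping: plug these choices of $\blocking_{-1}$ and $\blocking_0$ into the explicit union of \autoref{cor:explicitRecusrionST},
\[\blocking = \setBuilder{T\in\GrassmannDimSpace{1}{X}}{\dim(K\cap T)=-1,\generated{K,T}\in\blocking_{-1}}\cup \setBuilder{T\in\GrassmannDimSpace{1}{X}}{\dim(K\cap T)=0,\generated{K,T}\in\blocking_0},\]
and simplify each piece. In the first piece, $\dim(K\cap T)=-1$ means $K\cap T=\emptyset$; I would remark that the condition $K\cap T=\emptyset$ can be dropped, because if a line $T$ with $\generated{K,T}\in\blocking_{-1}$ happened to meet $K$, it only makes $\blocking$ larger as a set — but more cleanly, the union in \autoref{cor:explicitRecusrionST} is stated as a disjoint union indexed by $d=\dim(K\cap T)$, so dropping the constraint just re-sorts lines among the two families without changing $\blocking$ as a set; alternatively one observes that any line $T$ meeting $K$ with $\generated{K,T}\in\blocking_{-1}$ satisfies $\generated{K,T}\in\GrassmannDimSpace{k+1}{X}$ whereas elements of $\blocking_{-1}\subseteq\GrassmannDimSpace{1}{X/K}$ correspond to $(k+2)$-spaces, so no such $T$ exists and the constraint is vacuous. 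For the second piece, $\dim(K\cap T)=0$ means $K\cap T\neq\emptyset$, and $\generated{K,T}\in\GrassmannDimSpace{0}{\pi_K(H)}$ translates, via $\theta_K$, to $\generated{K,T}\subseteq H$; since $T\subseteq\generated{K,T}$, this is equivalent to $T\subseteq H$. Hence the second family is exactly $\setBuilder{T\in\GrassmannDimSpace{1}{H}}{K\cap T\neq\emptyset}$, matching the statement.

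I do not anticipate a serious obstacle here — the content is entirely in \autoref{cor:explicitRecusrionST} and \autoref{lem:minimalBlockingForSmallST}. The one point requiring mild care is the translation between the quotient geometry $X/K$ and subspaces of $X$ through $K$ (i.e. correctly reading off what a "hyperplane of $X/K$" and a "point of $X/K$ inside it" mean in $X$), and the verification that the residual constraint $K\cap T=\emptyset$ in the first family is automatically satisfied given the dimension of $\generated{K,T}$; both are handled by \autoref{def:quotientGeometry} and \autoref{rem:dimProjection}. Finally I would note that the union is disjoint because the two families consist of lines disjoint from $K$ versus lines meeting $K$, respectively, as already built into \autoref{cor:explicitRecusrionST}.
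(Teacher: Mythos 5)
Your approach is the same as the paper's---specialise \autoref{cor:explicitRecusrionST} to $(s,t)=(2,1)$ and unwind the union---but your bookkeeping contains two compensating errors, so the argument as written is not correct even though the final formula it arrives at is.

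First, the index $d$ in \autoref{cor:explicitRecusrionST} runs over $-1\le d\le\min\{k,t\}=\min\{k,1\}$, which for $k\ge 1$ is $\{-1,0,1\}$, not $\{-1,0\}$ as you assert. You therefore also need to supply $\blocking_1$, a $(0,-1)$-blocking set in $X/K$; this is forced to equal $\{K\}$, and it contributes a third piece $\blocking(1)=\setBuilder{T\in\GrassmannDimSpace{1}{X}}{\dim(K\cap T)=1,\ \generated{K,T}=K}=\GrassmannDimSpace{1}{K}$, i.e.\ the lines contained in $K$ (hence in $H$). By silently dropping $d=1$, the union you extract is not what the corollary actually provides.

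Second---and this is where the errors cancel---you claim ``$\dim(K\cap T)=0$ means $K\cap T\neq\emptyset$.'' For a line $T$ this is false in one direction: $K\cap T\neq\emptyset$ allows $\dim(K\cap T)\in\{0,1\}$. What the $d=0$ piece actually gives, after substituting $\blocking_0=\GrassmannDimSpace{0}{\pi_K(H)}$ and translating via $\theta_K$, is only $\setBuilder{T\in\GrassmannDimSpace{1}{H}}{\dim(K\cap T)=0}$, which for $k\ge 1$ is a proper subset of the second family in the statement. The lines you are missing are exactly $\GrassmannDimSpace{1}{K}$, i.e.\ precisely the $\blocking(1)$ you dropped. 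The two slips cancel, so the displayed $\blocking$ is the right set, but the chain of reasoning does not justify it. The fix is to keep all three pieces $\blocking(-1),\blocking(0),\blocking(1)$ and observe at the end that $\blocking(0)\cup\blocking(1)=\setBuilder{T\in\GrassmannDimSpace{1}{H}}{K\cap T\neq\emptyset}$, which is exactly what the paper's proof does. Your treatment of the first piece (the vacuity of $K\cap T=\emptyset$ given $\generated{K,T}\in\blocking_{-1}$, via the dimension count) and your identification $\blocking_0=\GrassmannDimSpace{0}{\pi_K(H)}$ are both correct and in line with the paper.
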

\begin{proof}
    Note that $\pi_K(H)\in\GrassmannDimSpace{n-k-2}{X/K}$ from \autoref{def:shortExactSequence} is a hyperplane in $X/K$, so \autoref{lem:minimalBlockingForSmallST} implies that $\blocking_0\leteq \GrassmannDimSpace{0}{\pi_K(H)}$ is a (minimal) $(1,0)$-blocking set in $X/K$.
    Also, $\blocking_1\leteq \{K\}$ is a $(0,-1)$-blocking set in $X/K$. 
    Then, for $\blocking(d)\leteq \setBuilder{T\in\GrassmannDimSpace{t}{X}}{\dim(K\cap T)=d,\generated{K,T}\in\blocking_d}$, $\blocking \leteq \bigcup_{d=-1}^{\min\{k,t\}} \blocking(d)$ is a $(2,1)$-blocking set by \autoref{cor:explicitRecusrionST}. 
    We determine those sets. 
    First note that 
    \begin{align*}
        \blocking(-1)&=\setBuilder{T\in\GrassmannDimSpace{1}{X}}{\generated{K,T}\in\blocking_{-1}}    
    \intertext{by \autoref{partial} and \autoref{rem:partial}.
    Next, }
        \blocking(0) &= \setBuilder{T\in\GrassmannDimSpace{1}{X}}{\dim(K\cap T)=0,\generated{K,T}\in\GrassmannDimSpace{0}{\pi_K(H)}} 
        \\&= \setBuilder{T\in\GrassmannDimSpace{1}{X}}{\dim(K\cap T)=0,\generated{K,T}\in\GrassmannDimSpace{k+1}{H}} 
        \\&=
    \setBuilder{T\in\GrassmannDimSpace{1}{H}}{\dim(K\cap T)=0},
    \intertext{because $\GrassmannDimSpace{0}{\pi_K(H)}=\setBuilder{Y\in\GrassmannDimSpace{k+1}{H}}{K\subset Y}$
    and $\dim(\generated{K,T})=\dim(K)+\dim(T)-\dim(K\cap T)$. 
    Finally, we have }
        \blocking(1)&=\setBuilder{T\in\GrassmannDimSpace{1}{X}}{\dim(K\cap T)=1,\generated{K,T}\in\{K\}}
        \\&=\setBuilder{T\in\GrassmannDimSpace{1}{H}}{\dim(K\cap T)=1}.
    \end{align*}
    Hence $\blocking(0)\cup\blocking(1)=\setBuilder{T\in\GrassmannDimSpace{1}{H}}{K\cap T\neq \emptyset}$    and the statement follows.
\end{proof}

As a special case, we recover the construction of \autoref{lem:trivialConstruction}.
\begin{remark}\label{rem:constructionRecursiveTrivial}
    We claim that in the case $\dim(X/K)=2$ (i.e. $k=n-3$) and $\blocking_{-1}\leteq \{H\}$, \autoref{lem:constructionBasic21}  gives $\blocking=\GrassmannDimSpace{1}{H}$, which is exactly the construction of \autoref{lem:trivialConstruction}.
    Indeed, using \autoref{rem:dimProjection}, we have $H\in\GrassmannDimSpace{n-k-2}{X/K}=\GrassmannDimSpace{1}{X/K}$. 
    Thus \autoref{lem:minimalBlockingForSmallST} shows that we can indeed take $\blocking_{-1}=\{H\}$ because $\dim(X/K)=2$.
    Then $\blocking(-1)\leteq \setBuilder{T\in\GrassmannDimSpace{1}{X}}{\generated{K,T}=H} = \setBuilder{T\in\GrassmannDimSpace{1}{H}}{K\cap T=\emptyset}$, so we are done.
\end{remark}

We can attempt to use \autoref{lem:constructionBasic21} recursively to find a smaller construction, however, it turns out that this gives no improvement.
\begin{remark}\label{rem:basinConstructionStepInvariant}
    Let $\blockingSets{2}{1}{X}$ denote the set of all $(2,1)$-blocking sets in $X$ and fix  a  hyperplane $H\in\GrassmannDimSpace{n-1}{X}$. 
    Then there is a map $\blockingSets{2}{1}{X/K}\xrightarrow{H} \blockingSets{2}{1}{X}$ given by  $\blocking_{-1}\mapsto \blocking$.    

    For a sequence $K_r \subseteq \dots \subseteq K_2\subseteq K_1\subseteq X$ of subspaces, the resulting composition 
    $\blockingSets{2}{1}{X/K_1} \xrightarrow{H}\blockingSets{2}{1}{X/K_2} \xrightarrow{H}\dots \xrightarrow{H}\blockingSets{2}{1}{X/K_r}\xrightarrow{H}\blockingSets{2}{1}{X}$ 
    equals the map $\blockingSets{2}{1}{X/K_1}\xrightarrow{H} \blockingSets{2}{1}{X}$. 
    Thus, for the resulting blocking set, only the starting blocking set in $X/K$ is relevant, and not the number of iterations.

    It would be interesting to see if an analogous statement holds in general for the construction of \autoref{cor:explicitRecusrionST}.
\end{remark}

By picking a different starting dimension, we can improve the construction. 
To see the size of the improvement, we need the following computational statements.

\begin{lemma}\label{lem:countingComplementarySubspaces}
    Let $U\leq V$ be vector spaces. 
    Then there is a non-canonical bijection between linear maps $V/U\to U$ and complementary subspaces of $U$ in $V$.
\end{lemma}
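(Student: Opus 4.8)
The plan is to identify both sides with a third set and read off the bijection. First I would recall that the complementary subspaces of $U$ in $V$ are in bijection with the \emph{linear sections} of the quotient map $\pi\colon V\to V/U$, that is, the linear maps $s\colon V/U\to V$ with $\pi\circ s=\id_{V/U}$. Indeed, if $W$ is a complement then $\pi|_W\colon W\to V/U$ is an isomorphism (injective because $W\cap U=\ker(\pi|_W)=0$, surjective because $W+U=V$), so $s_W\leteq(\pi|_W)^{-1}$ is a section; conversely, for a section $s$ the image $\im s$ is a complement, since $\im s\cap U=0$ (if $s(x)\in U=\ker\pi$ then $x=\pi(s(x))=0$) and $\im s+U=V$ (as $v=s(\pi(v))+\bigl(v-s(\pi(v))\bigr)$ with the second summand lying in $\ker\pi=U$). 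These two assignments are mutually inverse.

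Next I would show that the set of sections of $\pi$ is a torsor over the space of linear maps $V/U\to U$. Fix once and for all one complement $W_0$ of $U$ in $V$ (such a subspace exists by extending a basis of $U$ to a basis of $V$), and let $s_0\leteq s_{W_0}$ be the corresponding section. For any linear map $\psi\colon V/U\to U$, the map $s_0+\psi\colon V/U\to V$ is again a section because $\pi\circ(s_0+\psi)=\pi\circ s_0+\pi\circ\psi=\id_{V/U}+0$, using that $\psi$ has values in $U=\ker\pi$. Conversely, if $s$ is any section then $\pi\circ(s-s_0)=0$, so $s-s_0$ has image contained in $\ker\pi=U$ and therefore is a linear map $V/U\to U$. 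Hence $\psi\mapsto s_0+\psi$ is a bijection from the set of linear maps $V/U\to U$ onto the set of sections of $\pi$.

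Composing the two bijections yields the desired one: $\psi\mapsto \im(s_0+\psi)=\setBuilder{s_0(\bar v)+\psi(\bar v)}{\bar v\in V/U}$, which in terms of the fixed complement $W_0$ reads $\psi\mapsto\setBuilder{w+\psi(w+U)}{w\in W_0}$. Since this construction depends on the arbitrary choice of $W_0$ (equivalently $s_0$), the bijection is only non-canonical, as asserted. I do not anticipate any real difficulty: the only steps deserving a line of justification are that $\pi|_W$ is an isomorphism for a complement $W$ and that every section differs from $s_0$ by a map into $U$, both of which are immediate; note that the argument uses no finiteness hypothesis on $V$, although only the finite-dimensional case is needed in the sequel.
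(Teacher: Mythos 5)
Your proof is correct; it takes a genuinely different (and somewhat more structural) route than the paper. The paper fixes a complement $W_0$, uses the isomorphism $W_0\cong V/U$ to trade $\mathrm{Hom}(V/U,U)$ for $\mathrm{Hom}(W_0,U)$, and then hands you the bijection explicitly: $f\mapsto W_f=\setBuilder{w_0-f(w_0)}{w_0\in W_0}$, with inverse $W\mapsto f_W$ given by the composite $W_0\hookrightarrow U\oplus W\twoheadrightarrow U$. You instead factor the bijection through the set of linear sections of $\pi\colon V\to V/U$: first sections $\leftrightarrow$ complements via $s\mapsto\im s$, then sections form a torsor over $\mathrm{Hom}(V/U,U)$ based at $s_0$. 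The two constructions agree up to the harmless sign $\pm\psi$ (your image $\setBuilder{w+\psi(w+U)}{w\in W_0}$ versus the paper's $\setBuilder{w_0-f(w_0)}{w_0\in W_0}$). The paper's version is shorter and more hands-on, with fewer intermediate objects to verify; yours makes the non-canonicity conceptually transparent (it is the choice of base point of an affine space/torsor), works verbatim in infinite dimension, and is the form one would reuse to discuss the splitting of short exact sequences more generally. Both are complete and correct.
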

\begin{proof}
    Write $V=U\oplus W_0$ for some $W_0\leq V$. By picking a non-canonical isomorphism $W_0\to V/U$, it is enough to parametrise complementary subspaces by linear maps $f\from W_0\to U$. 
    For such a linear map $f$, we assign the complementary subspace $W_f\leteq \setBuilder{w_0-f(w_0)}{w_0\in W_0}$. 
    Conversely, for a complement $W$ of $U$, we assign the linear map $f_W\from W_0\embeds U\oplus W\surjects U$.
    These assignments are inverses of each other.
\end{proof}

\begin{lemma}\label{lem:constructionFibreSize}
    Let $X=\PG(n,q)$, $K\in\GrassmannDimSpace{k}{X}$ and $\blocking\subseteq\GrassmannDimSpace{t}{X/K}$ where $-1\leq k\leq n$ and $-1\leq t\leq \dim(X/K)=n-k-1$. 
    Then $\cardinality{\setBuilder{T\in\GrassmannDimSpace{t}{X}}{\generated{K,T}\in\blocking}} 
    = q^{(t+1)(k+1)}\cardinality{\blocking}$.
\end{lemma}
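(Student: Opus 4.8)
The plan is to reduce the count to counting complementary subspaces and then invoke \autoref{lem:countingComplementarySubspaces}. First I would unwind the hypothesis that $L\leteq\generated{K,T}$ lies in $\blocking\subseteq\GrassmannDimSpace{t}{X/K}$: by \autoref{def:quotientGeometry}, such an $L$ is a subspace of $X$ of projective dimension $t+k+1$ containing $K$, so $\V{L}$ has vector dimension $t+k+2$ while $\V{K}$ has vector dimension $k+1$. Since distinct $T$ have distinct spans $\generated{K,T}$, the set in question splits as the disjoint union, over $L\in\blocking$, of the fibres $\setBuilder{T\in\GrassmannDimSpace{t}{X}}{\generated{K,T}=L}$, so it suffices to show each such fibre has size $q^{(t+1)(k+1)}$ independently of $L$.

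Next I would identify each fibre with the set of complements of $\V{K}$ in $\V{L}$. If $T\subseteq L$ is a $t$-space then $\Dim\V{K}+\Dim\V{T}=(k+1)+(t+1)=\Dim\V{L}$, so the dimension formula for $\V{K}+\V{T}$ shows that $\generated{K,T}=L$ holds if and only if $\V{K}\oplus\V{T}=\V{L}$, i.e.\ iff $\V{T}$ is a complement of $\V{K}$ in $\V{L}$; conversely any $T\in\GrassmannDimSpace{t}{X}$ with $\generated{K,T}=L$ is automatically contained in $L$. (As a sanity check, this also forces $\dim_X(K\cap T)=k+t-(k+t+1)=-1$, i.e.\ $K\cap T=\emptyset$.) By \autoref{lem:countingComplementarySubspaces}, the complements of $\V{K}$ in $\V{L}$ are in bijection with the linear maps $\V{L}/\V{K}\to\V{K}$; as $\Dim(\V{L}/\V{K})=t+1$ and $\Dim\V{K}=k+1$, choosing bases identifies these with $(k+1)\times(t+1)$ matrices over $\F_q$, of which there are exactly $q^{(t+1)(k+1)}$. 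Summing over $L\in\blocking$ yields the claimed total $q^{(t+1)(k+1)}\cardinality{\blocking}$.

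Finally I would dispatch the degenerate values directly: if $k=-1$ then $K=\emptyset$ and $X/K=X$, so the set in question is $\blocking$ itself and the exponent $(t+1)(k+1)$ vanishes; if $t=-1$ (which includes the case $k=n$) then $T=\emptyset$, $\generated{K,T}=K$, and $\blocking\subseteq\GrassmannDimSpace{-1}{X/K}=\{K\}$, so the count is $\cardinality{\blocking}\in\{0,1\}$, matching because $q^{0}=1$. I do not expect a genuine obstacle here: the single computational ingredient is \autoref{lem:countingComplementarySubspaces}, which is already available, and the rest is the dimension bookkeeping above; the one point needing a little care is the equivalence ``$\generated{K,T}=L$ if and only if $\V{T}$ is a complement of $\V{K}$ in $\V{L}$'', which is exactly what makes the fibres all equinumerous.
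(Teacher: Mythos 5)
Your proof is correct and takes essentially the same route as the paper: split the count into fibres over elements of $\blocking$, observe that each fibre is the set of complements of $\V{K}$ inside the corresponding vector subspace, and invoke \autoref{lem:countingComplementarySubspaces} to count those complements as linear maps, giving $q^{(t+1)(k+1)}$ per fibre. (Incidentally, the paper's displayed computation ends with $q^{t+1}q^{k+1}$, which is a typo for $q^{(t+1)(k+1)}$; your bookkeeping is the correct one, and it matches the stated lemma and its use with $t=1$ in \autoref{recursion}.)
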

\begin{proof}
    Write $X=\Projectivisation{V}$ for some $(n+1)$-dimensional vector space $V$. Then the following computation gives the statement.
    \begin{align*}
        \cardinality{\setBuilder{T\in\GrassmannDimSpace{t}{X}}{\generated{K,T}\in\blocking}} &=
        \sum_{T_2\in \blocking} \cardinality{\setBuilder{T\in\GrassmannDimSpace{t}{X}}{K\cap T=\emptyset, \generated{K,T}=T_2}} \\&=
        \sum_{T_2\in \blocking} \cardinality{\setBuilder{U\leq V}{\V{T_2}=\V{K}\oplus U}} \\&=
        \sum_{T_2\in \blocking} \cardinality{\setBuilder{\alpha\from \V{T_2}/\V{K}\to \V{K}}{\text{$\alpha$ is linear}}} \\&=
        \cardinality{\blocking}q^{t+1}q^{k+1}
    \end{align*}
    Here we used the following. 
    First,  $K\cap T=\emptyset$ is automatically satisfied, because $\dim_X(K\cap T) = \dim_X(T)-\dim_{X/K}(\generated{K,T})-1 = -1$ by
    \autoref{rem:dimProjection}. 
    Second, we wrote $\V{T}=U\leq V$. 
    Third, we used \autoref{lem:countingComplementarySubspaces}.
    Finally, we used $\Dim(\V{T_2}/\V{K})=t+1$, $\Dim(\V{K})=k+1$.
    \end{proof}

Now we are ready to compute the improvement (compared to \autoref{rem:constructionRecursiveTrivial}) if we start from the $3$-dimensional case instead.
\begin{remark}\label{rem:construction1step}
    Let $K_2\subseteq K_1\subseteq X$ with $\dim(K_2)=n-4$ and $\dim(K_1)=n-3$. 
    Let $\mathcal{L}$ be a $1$-spread in $X/K_2$ which is a $(2,1)$-blocking set by \autoref{lem:minimalBlockingForSmallST} as $\dim(X/K_2)=3$. 
    Applying the map of \autoref{rem:basinConstructionStepInvariant}, we obtain the following $(2,1)$-blocking sets.
    \[\begin{tikzcd}[row sep=tiny]
        \blockingSets{2}{1}{X/K_1}\ar[r,"H"] 
        & 
        \blockingSets{2}{1}{X/K_2}\ar[r,"H"] 
        &
        \blockingSets{2}{1}{X} 
        \\
        \{H\}\ar[r,|->] & 
        \blocking''\leteq \setBuilder{T\in\GrassmannDimSpace{1}{X/K_2}}{T\subseteq H} \ar[r,|->] & 
        \blocking'\leteq \setBuilder{T\in\GrassmannDimSpace{1}{X}}{T\subseteq H}
        \\
        & \mathcal{L} \ar[r,|->] 
        & \blocking
    \end{tikzcd}
    \]

    Assume $X=\PG(n,q)$. 
    Then $\cardinality{\blocking''}=\qbinom{3}{2}{q}$ and $\cardinality{\blocking'}=\qbinom{n}{2}{q}$ by \autoref{rem:constructionRecursiveTrivial}, 
    and $\cardinality{\mathcal{L}}=q^2+1$ by 
    \autoref{lem:minimalBlockingForSmallST}.
    Then \autoref{lem:constructionFibreSize} shows that 
    $\cardinality{\blocking'}-\cardinality{\blocking}=q^{2(\dim(K_2)+1)}\cdot (\cardinality{\blocking''}-\cardinality{\mathcal{L}}) = 
    q^{2n-5}$.  
    Since $\cardinality{\blocking'}=\qbinom{n}{2}{q}$ by \autoref{lem:trivialConstruction} and \autoref{rem:constructionRecursiveTrivial}, we have 
    \[\cardinality{\blocking} = \qbinom{n}{2}{q}-q^{2n-5}
    = \frac{(q^n-1)(q^{n-1}-1)}{(q^2-1)(q-1)}-q^{2n-5}.\]
    Of course, $\cardinality{\blocking}$ could be determined in a more straightforward way using more computations similar to what we will do in \autoref{recursion}.
    For $n\geq 5$, this gives 
    \[f(n,q)\leq \coefficients{1}{0}{2}{2}.\]
\end{remark}

\subsection{Improved recursion for \texorpdfstring{$(s,t)=(2,1)$}{(s,t)=(2,1)}}\label{sec:construction21}

Now we aim to improve the construction of \autoref{rem:construction1step}. 
Recall that it originated from \autoref{cor:explicitRecusrionST}, which in turn was an application of the more general \autoref{partial}. 
We will apply the latter result differently as follows.
Consider the setup of \autoref{tab:tisi}. We keep the $d=-1,2$ cases unchanged, but modify the $d=0$ case carefully so that the $d=1$ case could be omitted completely when applied to \autoref{cor:recursionSimple}.
The idea is to consider $X/P$ for $P\in K$ instead of the usual $X/K$, as it will give us enough freedom to have suitable control over the lines intersecting $K$ in a single point.

\begin{prop}[Improved recursion for $(s,t)=(2,1)$]\label{lem:construction21ImprovedGeneral}
    Let $X$ be an $n$-dimensional projective space and 
    $K\in\GrassmannDimSpace{k}{X}$ for $-1\leq k\leq n-3$.
    Let $\blocking_K\subseteq \GrassmannDimSpace{1}{K}$ be a $(2,1)$-blocking set in $K$, 
    $\blocking_{X/K}\subseteq \GrassmannDimSpace{1}{X/K}$ be a $(2,1)$-blocking set in $X/K$ and 
    $\blocking_{X/P}\subseteq\GrassmannDimSpace{0}{X/P}$ be a $(1,0)$-blocking set in $X/P$ for every $P\in \GrassmannDimSpace{0}{K}$. 
    Assume that for every $L\in\GrassmannDimSpace{1}{K}\setminus\blocking_K$, the set      
    $\bigcup\setBuilder{\blocking_{X/P}\setminus\GrassmannDimSpace{0}{K/P}}{P\in \GrassmannDimSpace{0}{L}}\subseteq \GrassmannDimSpace{1}{X}$ blocks 
    $\cS_L\leteq 
\setBuilder{S\in\GrassmannDimSpace{2}{X}}{K\cap S=L}$. 
    
    Then the disjoint union
    \[
    \blocking\leteq 
    \setBuilder{T\in\GrassmannDimSpace{1}{X}}{\generated{K,T}\in\blocking_{X/K}} \disjointunion 
    \Big(\disjointUnion_{{P\in \GrassmannDimSpace{0}{K}}} (\blocking_{X/P}\setminus\GrassmannDimSpace{0}{K/P})\Big) \disjointunion 
    \blocking_K
    \subseteq \GrassmannDimSpace{1}{X}\]
    is a $(2,1)$-blocking set in $X$.
\end{prop}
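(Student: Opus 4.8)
The plan is to fix an arbitrary plane $S\in\GrassmannDimSpace{2}{X}$ and exhibit a line of $\blocking$ contained in $S$, arguing by cases on $d\leteq\dim(K\cap S)$, which necessarily lies in $\{-1,0,1,2\}$ since it is at most $\min\{k,2\}$ and at least $-1$. It is convenient to recall that $\blocking$ is the union of three pieces: the \emph{quotient piece} $\setBuilder{T\in\GrassmannDimSpace{1}{X}}{\generated{K,T}\in\blocking_{X/K}}$, the \emph{point piece} $\disjointUnion_{P\in\GrassmannDimSpace{0}{K}}(\blocking_{X/P}\setminus\GrassmannDimSpace{0}{K/P})$, and $\blocking_K$; in each case the required line will be found inside one of these. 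If $d=2$, then $S\subseteq K$, so since $\blocking_K$ is a $(2,1)$-blocking set in $K$ there is a line $T\in\blocking_K\subseteq\blocking$ with $T\subseteq S$. If $d=1$, put $L\leteq K\cap S\in\GrassmannDimSpace{1}{K}$: if $L\in\blocking_K$, then $L\subseteq S$ is itself a line of $\blocking$; if $L\notin\blocking_K$, the standing hypothesis states precisely that $\bigcup\setBuilder{\blocking_{X/P}\setminus\GrassmannDimSpace{0}{K/P}}{P\in\GrassmannDimSpace{0}{L}}$ --- a subset of the point piece, since $\GrassmannDimSpace{0}{L}\subseteq\GrassmannDimSpace{0}{K}$ --- blocks $\cS_L=\setBuilder{S'\in\GrassmannDimSpace{2}{X}}{K\cap S'=L}$, and $S\in\cS_L$.

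For $d=-1$ I would appeal to \autoref{partial}, applied with the given $K$, parameters $t_1=s_1=-1$, $t_2=1$, $s_2=2$ (hence $t=1$, $s=2$; the dimensional hypotheses hold because $k\leq n-3$ gives $s_2\leq\dim(X/K)$), the trivial blocking set $\GrassmannDimSpace{-1}{K}=\{\emptyset\}$ of $\GrassmannDimSpace{-1}{K}$ in $K$, and our $\blocking_{X/K}$, which blocks all of $\GrassmannDimSpace{2}{X/K}$. Using \eqref{eq:dimension_Pi_Rho} one sees that the blocking set produced by \autoref{partial} is exactly the quotient piece of $\blocking$ (the condition $K\cap T=\emptyset$ being automatic there), and that it blocks $\setBuilder{S'\in\GrassmannDimSpace{2}{X}}{K\cap S'=\emptyset}$, which contains our $S$. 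The only case requiring a genuine observation is $d=0$: write $P\leteq K\cap S\in\GrassmannDimSpace{0}{K}$. Since $P\subseteq S$, the plane $S$ corresponds to a line of the quotient $X/P$, so the $(1,0)$-blocking set $\blocking_{X/P}$ of $X/P$ contains a point of $X/P$ lying on that line --- i.e.\ a line $T\in\blocking_{X/P}$ with $P\subseteq T\subseteq S$. Crucially $T\not\subseteq K$, for otherwise $T\subseteq K\cap S=P$, impossible for a line; thus $T\in\blocking_{X/P}\setminus\GrassmannDimSpace{0}{K/P}$, a subset of the point piece, and $T\subseteq S$.

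These four cases cover every plane, so $\blocking$ is a $(2,1)$-blocking set. To also justify the disjointness asserted in the statement, one checks with \eqref{eq:dimension_Pi_Rho} that a line of the quotient piece meets $K$ in $\emptyset$, a line of the point piece meets $K$ in exactly one point (it passes through some $P\in\GrassmannDimSpace{0}{K}$ but is not contained in $K$), and a line of $\blocking_K$ lies in $K$; the inner union defining the point piece is disjoint because a line not contained in $K$ meets $K$ in a single point. I do not anticipate a real obstacle here --- the argument is a clean four-way split --- but the two points that must be handled carefully are recognising in the $d=0$ case that the line pulled back from $X/P$ automatically avoids $K$ (this is exactly why the point piece, rather than all lines through points of $K$, is enough), and lining up the $d=1$ case word-for-word with the hypothesis on $\cS_L$; the substantive task of verifying that hypothesis in actual constructions is deferred to \autoref{sec:construction21}.
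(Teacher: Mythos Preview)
Your proof is correct and follows essentially the same approach as the paper: both argue by a four-way case split on $d=\dim(K\cap S)$, treating $d=-1$ via the quotient piece (through \autoref{partial}), $d=0$ via the point piece, $d=2$ via $\blocking_K$, and $d=1$ by splitting on whether $L\in\blocking_K$ and invoking the hypothesis otherwise, with disjointness checked exactly as you do by looking at $\dim(K\cap T)$. The only cosmetic difference is that the paper packages the $d\in\{-1,0,2\}$ cases uniformly through \autoref{partial} via a single parameter table, whereas you handle $d=0$ by a direct quotient argument; your treatment of that case (observing $T\subseteq K$ would force $T\subseteq K\cap S=P$) is in fact identical in content to the paper's remark that $T\notin\GrassmannDimSpace{0}{K/P}$ since $d\neq 1$.
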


\begin{proof}
    Define the following parameters as in \autoref{tab:tisi21}.     Note that the $d\in\{-1,2\}$ cases match exactly those in \autoref{tab:tisi}, the $d=0$ case is replaced considering every $P\in \GrassmannDimSpace{0}{K}$ instead of $K$. The $d=1$ case is omitted.
    \begin{table}[!htb]
    \centering
    \begin{tabular}{c|c||c|c|c|c|c|c|c}
         $d$ & $Y$  & $t_1(d)$ & $s_1(d)$ & $t_2(d)$ & $s_2(d)$  & $K(Y)$  & $\blocking_{K(Y)}(d)$ & $\blocking_{X/K(Y)}(d)$ 
         \\\hline\hline
        $-1$ & $K$ & $-1$ & $-1$ & $1$ & $2$ & $K$ & $\{\emptyset\}$ & $\blocking_{X/K}$
        \\\hline
        $0$ & $P\in\GrassmannDimSpace{0}{K}$ & $0$ & $0$ & $0$ & $1$ & $P$ & $\{P\}$ & $\blocking_{X/P}$ 
        \\\hline
        $2$ & $K$ & $1$ & $2$ & $-1$ & $-1$ & $K$  & $\blocking_K$ & $\{K\}$
    \end{tabular}
    \caption{An explicit choice for $t_i(d)$, $s_i(d)$ for $d\in\{-1,0,2\}$, $(s,t)=(2,1)$}
    \label{tab:tisi21}
    \end{table}
    
    Apply \autoref{partial} with the setup of \autoref{tab:tisi21} for $d\leq \min\{2,k\}$ and call the resulting blocking set $\blocking(d,Y)$. 
    We have  $\blocking(-1,K)=\setBuilder{T\in\GrassmannDimSpace{1}{X}}{\generated{K,T}\in\blocking_{X/K}}$, 
    $\blocking(0,P)=
    \blocking_{X/P}$
    and $\blocking(2,K)=\blocking_K$
    by  \autoref{rem:partial}.

    We claim  that $\blocking = \blocking(-1,K)\cup \bigcup\setBuilder{ \blocking(0,P)\setminus\GrassmannDimSpace{1}{K}}{P\in \GrassmannDimSpace{0}{K}} \cup \blocking(2,K)$ is a $(2,1)$-blocking set as stated.
    Indeed, pick $S\in\GrassmannDimSpace{2}{X}$ and look for a $T\in\blocking$ with $T\subset S$ by considering cases for $d\leteq \dim(K\cap S)$. 
    If $d\in\{-1,2\}$, then by construction, there is $T\in\blocking(d,K)$ such that $T\subset S$. 
    If $d=0$, then $P\leteq K\cap S\in\GrassmannDimSpace{0}{K}$, so there is $T\in\blocking(0,P)$ such that $T\subset S$. Note that $T\notin\GrassmannDimSpace{0}{K/P}$ as $d\neq 1$.
    If $d=1$, then $L\leteq K\cap S\in\GrassmannDimSpace{1}{K}$. 
    If $L\in \blocking_K$, then we take $T=L\in\blocking_K=\blocking(2,K)$. 
    Otherwise, since $S\in\cS_L$, by assumption, there is $P\in\GrassmannDimSpace{0}{L}\subseteq \GrassmannDimSpace{0}{K}$ and $T\in \blocking_{X/P}\setminus\GrassmannDimSpace{0}{K/P}=\blocking(0,P)\setminus\GrassmannDimSpace{0}{K/P}$ such that $T\subset S$.

    The union is indeed disjoint, as the lines from different sets intersect $K$ in different subspaces.
\end{proof}

To show the existence of suitable blocking sets $\blocking_{X/P}$ for \autoref{lem:construction21ImprovedGeneral}, we review some facts about orthogonal complements and polarities. See \cite[section 6.1]{Cameron} for further details.

\begin{defn}
    Let $V$ be a vector space over $\field{}$. 
    Let $\beta\from V\times V\to \field{}$ be a symmetric non-degenerate bilinear form, i.e. 
    for every $u,v\in V$ we have $\beta(u,v)=\beta(v,u)$, 
    for every $u\in V$ we have that $\beta(u,V)=0$ implies $u=0$, 
    moreover $\beta$ is $\field{}$-linear in both variables.
    For every subspace $U\leq V$, define the $\beta$-orthogonal subspace $U^\perp\leteq U^{\perp_\beta}$ to be $ \setBuilder{v\in V}{\beta(U,v)=0}\leq V$.
\end{defn}

\begin{remark} 
    The operation of taking orthogonal subspaces is inclusion-reversing, i.e. for subspaces $U_1\leq U_2$, we have $U_1^\perp \geq U_2^\perp$.
    The non-degeneracy of $\beta$ implies that $\Dim_\field{}(U)+\Dim_\field{}(U^\perp)=\Dim_\field{}(V)$. 
    The symmetry of $\beta$ implies that $(U^\perp)^\perp = U$ for every subspace $U\leq V$.
\end{remark}

We can translate orthogonal complements to projective spaces to get polarities. Recall \autoref{defn:Proj}.
\begin{defn}\label{defn:polarity}
    Let $X=\Projectivisation{V}$ be an $n$-dimensional projective space. 
    Let $\beta$ be a symmetric non-degenerate bilinear form on $V$. 
    For every subspace $Y\in\GrassmannSpace{X}$, define $\polarity{\beta}(Y)\leteq \Projectivisation{\V{Y}^{\perp_\beta}}$.
\end{defn}

\begin{remark}
    The setup above gives rise to maps $\polarity{\beta}\from \GrassmannDimSpace{d}{X}\to \GrassmannDimSpace{n-d-1}{X}$ for every $-1\leq d\leq n$.
    Note that $\polarity{\beta}\circ \polarity{\beta}$ is the identity, i.e. $\polarity{\beta}\from \GrassmannSpace{X}\to\GrassmannSpace{X}$ is a \emph{polarity}.
\end{remark}

\begin{lemma}\label{lem:ballon}
    In the setup of \autoref{defn:polarity}, 
    for any $L\in \GrassmannDimSpace{1}{X}$, we have 
    $\bigcup_{P\in L} \polarity{\beta}(P)=X$.    
\end{lemma}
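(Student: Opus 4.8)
The plan is to pass to the underlying vector space and reduce the claim to an elementary rank--nullity argument. Write $X=\Projectivisation{V}$ with $\Dim(V)=n+1$, and fix the line $L$, so that $\V{L}\leq V$ is a $2$-dimensional subspace and the points $P\in L$ are precisely the $1$-dimensional subspaces of $\V{L}$. Since the inclusion $\bigcup_{P\in L}\polarity{\beta}(P)\subseteq X$ is automatic, it suffices to show that every point $Q\in\GrassmannDimSpace{0}{X}$ lies in $\polarity{\beta}(P)$ for some $P\in L$.

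First I would unwind what membership means. A point $Q$ is spanned by a nonzero vector $w\in V$, and $Q\in\polarity{\beta}(P)=\Projectivisation{\V{P}^{\perp_\beta}}$ holds exactly when $\V{Q}\leq \V{P}^{\perp_\beta}$, i.e.\ when $\beta(v,w)=0$ for some (equivalently any) nonzero vector $v$ spanning $\V{P}$. So the task reduces to: given $w\in V\setminus\{0\}$, find a nonzero $v\in\V{L}$ with $\beta(v,w)=0$.

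For this, consider the linear functional $\varphi\from \V{L}\to\field{}$ defined by $\varphi(v)\leteq \beta(v,w)$, which is $\field{}$-linear because $\beta$ is linear in its first variable. Since $\Dim(\V{L})=2$ while $\Dim(\im\varphi)\leq 1$, the rank--nullity theorem gives $\Dim(\ker\varphi)\geq 1$, so there exists a nonzero $v\in\ker\varphi\subseteq\V{L}$. Letting $P\in L$ be the point with $\V{P}=\field{}v$, we obtain $Q\in\polarity{\beta}(P)$, establishing the reverse inclusion and hence the lemma.

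There is essentially no serious obstacle: the only points needing a little care are the bookkeeping between the projective and vector pictures (that points of $L$ correspond to $1$-spaces inside $\V{L}$, and that $\polarity{\beta}$ is the projectivisation of $\perp_\beta$) and the observation that the chosen $v$ is nonzero. For a more geometric alternative, one can instead note that $\polarity{\beta}(L)=\bigcap_{P\in L}\polarity{\beta}(P)$ is an $(n-2)$-space, that $\polarity{\beta}$ being an inclusion-reversing involution identifies $\setBuilder{\polarity{\beta}(P)}{P\in L}$ with the pencil of \emph{all} hyperplanes through $\polarity{\beta}(L)$, and that any point $Q\in X$ lies on some hyperplane of such a pencil --- namely on every member if $Q\in\polarity{\beta}(L)$, and on $\generated{\polarity{\beta}(L),Q}$ otherwise.
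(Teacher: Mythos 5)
Your main argument is correct and is essentially identical to the paper's proof: both pass to the vector space, define the linear functional $v\mapsto\beta(v,v_Q)$ on the $2$-dimensional space $\V{L}$, and apply rank--nullity to find a nonzero vector in the kernel, giving the required $P\in L$. The geometric alternative you sketch at the end is also sound --- $\polarity{\beta}$ being an inclusion-reversing involution does identify $\setBuilder{\polarity{\beta}(P)}{P\in L}$ with the full pencil of hyperplanes through the codimension-$2$ space $\polarity{\beta}(L)$, and every point clearly lies on some member of that pencil --- and it is a pleasant way to see the statement, though the paper does not take that route.
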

\begin{proof}
    Pick $Q\in X$ and fix a corresponding $v_Q\in\V{Q}\setminus\{0\}$.
    Define an $\field{}$-linear map $\phi\from \V{L}\to \field{},v\mapsto \beta(v,v_Q)$. 
    The isomorphism theorem implies that 
    $\Dim_\field{}(\ker(\phi)) = 
    \Dim_\field{}(\V{L})-\Dim_\field{}(\im(\phi))\geq 
    \Dim_\field{}(\V{L})-\Dim_\field{}(\field{}) =
    2-1=1$. 
    So there is $0\neq v_L\in \V{L}$ such that $\beta(v_L,v_Q)=0$, 
    thus $\beta(\field{}v_L,v_Q)=0$, 
    i.e. $v_Q\in (\field{}v_L)^{\perp_\beta}$, 
    which means that 
    $Q\in \Projectivisation{(\field{}v_L)^{\perp_\beta}}=\polarity{\beta}(P)$ 
    for $P\leteq \Projectivisation{\field{}v_L}\in L$. (Here $\field{}v$ denotes the vector space over $\field{}$ generated by the vector $v$.)
\end{proof}

\begin{lemma}\label{lem:bilinearMap}
    Let $V$ be a finite dimensional $\field{}$-vector space, and $U\leq V$ a subspace such that $\Dim(U)\leq \frac{1}{2}\Dim(V)$. 
    Then there is a non-degenerate symmetric bilinear form $\beta\from V\times V\to \field{}$ such that $\beta(U,U)=0$.
\end{lemma}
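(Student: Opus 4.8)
The plan is to construct $\beta$ by hand from a hyperbolic block on $U$ together with a chosen ``dual'' of $U$, plus an arbitrary non-degenerate block on the leftover space. Write $m\leteq \Dim(U)$ and $n\leteq \Dim(V)$; the hypothesis $\Dim(U)\leq\tfrac12\Dim(V)$ says exactly that $n-2m\geq 0$, which is what leaves room for the dual block. First I would pick a basis $e_1,\dots,e_m$ of $U$ and extend it to a basis $e_1,\dots,e_m,h_1,\dots,h_{n-2m},f_1,\dots,f_m$ of $V$; the particular extension is irrelevant, only the chosen labelling of the extra vectors as $h$'s and $f$'s matters.

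Next I would define $\beta$ on this basis (and extend bilinearly): set $\beta(e_i,f_j)=\beta(f_j,e_i)=\delta_{ij}$, set $\beta(h_i,h_j)=\delta_{ij}$, and declare every other pair of basis vectors orthogonal; in particular $\beta(e_i,e_j)=0$ for all $i,j$. Equivalently, $\beta$ has Gram matrix
\[
M=\begin{pmatrix} 0 & 0 & I_m \\ 0 & I_{n-2m} & 0 \\ I_m & 0 & 0\end{pmatrix}
\]
with respect to this basis (the middle block is empty when $n=2m$, and $M=I_n$ when $m=0$).

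Then three short verifications finish the argument. Symmetry of $\beta$ is immediate because $M$ is a symmetric matrix. Non-degeneracy holds because $\det M=(-1)^m\neq 0$ in $\field{}$, so $M$ is invertible; note this works over every field, including characteristic $2$, so there is no subtlety about symmetric versus alternating forms here. Finally $\beta(U,U)=0$: since $U=\generated{e_1,\dots,e_m}$ and $\beta(e_i,e_j)=0$ for all $i,j$, bilinearity gives $\beta(u,u')=0$ for all $u,u'\in U$.

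I expect essentially no obstacle in this lemma; the only point worth flagging is the temptation to put a single ``diagonal'' form on all of $V$, which in general has no totally isotropic subspace of the required size (e.g.\ positive definite forms over $\mathbb{R}$). The hyperbolic pairing between $\generated{e_1,\dots,e_m}$ and $\generated{f_1,\dots,f_m}$ is precisely what forces $U$ to be isotropic while keeping $\beta$ non-degenerate, and the dimension bound is exactly what makes this pairing fit inside $V$.
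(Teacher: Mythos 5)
Your proof is correct and is essentially the same as the paper's: the paper chooses a basis $B\supseteq B_U$ together with an involution $\phi$ of $B$ that moves every element of $B_U$ outside $B_U$ and sets $\beta(b,b')=[b'=\phi(b)]$, and your hyperbolic pairing $e_i\leftrightarrow f_i$ with the $h_i$ fixed is exactly such an involution, giving the same permutation-matrix Gram form. The only cosmetic difference is that you verify non-degeneracy via $\det M=(-1)^m\neq 0$, while the paper checks directly that $\beta(v,\phi(b))=v_b$ for all $b$ forces $v=0$; both are immediate.
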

\begin{proof}
    Pick a basis $B$ for $V$ such that there is a subset $B_U\subseteq B$ which is a basis for $U$. 
    By assumption, $\cardinality{B_U}\leq \frac{1}{2}\cardinality{B}$, so there is a bijection    $\phi\from B\to B$ such that $\phi\circ \phi$ is the identity on $B$ (i.e. $\phi$ is an involution) and 
    $\phi(B_U)\cap B_U=\emptyset$.  
    Define 
    \[
        \beta\from B\times B\to \field{},\qquad
        (b,b')\mapsto \begin{cases}
        1,& b'=\phi(b)\\
        0,& b'\neq \phi(b)
    \end{cases}
    \]
    and extend it $\field{}$-linearly to $\beta\from V\times V\to \field{}$.
    Then $\beta$ is symmetric as $b'=\phi(b)$ if and only if $b=\phi(b')$. 
    By construction, $\beta(b,b')=0$ for $b,b'\in B_U$, hence $\beta(U,U)=0$. 
    Finally for non-degeneracy, assume that $\beta(v,V)=0$ for some $v=\sum_{b\in B} v_b$. Then $0=\beta(v,\phi(b))=v_b$ for every $b\in B$, hence $v=0$.
\end{proof}

\begin{cor}\label{cor:absolute}
    For every $n$-dimensional projective space $X$ and subspace $K\in\GrassmannDimSpace{k}{X}$ with $-1\leq k\leq \frac{n-1}{2}$, there exists a polarity $\polarity{\beta}$ such that $K\subseteq \polarity{\beta}(K)$. (In this case, $K$ is called an \emph{absolute} subspace.)
\end{cor}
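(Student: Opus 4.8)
The goal is to prove \autoref{cor:absolute}: given $X=\Projectivisation{V}$ of projective dimension $n$ and a subspace $K\in\GrassmannDimSpace{k}{X}$ with $-1\leq k\leq \frac{n-1}{2}$, we must produce a polarity $\polarity{\beta}$ (i.e.\ a symmetric non-degenerate bilinear form $\beta$ on $V$ up to the projective translation) with $K\subseteq \polarity{\beta}(K)$.

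The plan is to simply translate the inclusion $K\subseteq\polarity{\beta}(K)$ back to the vector space level and invoke \autoref{lem:bilinearMap}. By \autoref{def:grassmann} and \autoref{defn:Proj}, the subspace $\V{K}\leq V$ has vector space dimension $\Dim_\field{}(\V{K})=\dim_X(K)+1=k+1$, while $\Dim_\field{}(V)=n+1$. The hypothesis $k\leq \frac{n-1}{2}$ rearranges to $2(k+1)\leq n+1$, i.e.\ $\Dim_\field{}(\V{K})\leq \frac{1}{2}\Dim_\field{}(V)$, which is precisely the hypothesis of \autoref{lem:bilinearMap} applied with $U\leteq\V{K}$. (One should double-check the degenerate edge case $k=-1$: then $\V{K}=0$, the bilinear form condition $\beta(\V{K},\V{K})=0$ is vacuous, and any non-degenerate symmetric form works; \autoref{lem:bilinearMap} covers this trivially.)

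Thus I would apply \autoref{lem:bilinearMap} to obtain a non-degenerate symmetric bilinear form $\beta\from V\times V\to\field{}$ with $\beta(\V{K},\V{K})=0$. By \autoref{defn:polarity}, this $\beta$ defines a polarity $\polarity{\beta}$ on $X$ with $\polarity{\beta}(K)=\Projectivisation{\V{K}^{\perp_\beta}}$. Now $\beta(\V{K},\V{K})=0$ says exactly that every vector of $\V{K}$ is $\beta$-orthogonal to all of $\V{K}$, i.e.\ $\V{K}\subseteq \V{K}^{\perp_\beta}$. Applying the projectivisation $\Projectivisation{-}$, which is inclusion-preserving (\autoref{rem:ProjAndVinverses}), yields $K=\Projectivisation{\V{K}}\subseteq \Projectivisation{\V{K}^{\perp_\beta}}=\polarity{\beta}(K)$, as required.

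There is essentially no obstacle here; the statement is a direct corollary, and the only thing to be careful about is the bookkeeping between the projective dimension $k$ of $K$ and the vector space dimension $k+1$ of $\V{K}$ — it is exactly this shift that turns $k\leq\frac{n-1}{2}$ into the clean bound $\Dim(\V{K})\leq\frac{1}{2}\Dim(V)$ needed by \autoref{lem:bilinearMap}. I would write the proof in three lines: dimension count, invoke \autoref{lem:bilinearMap}, translate $\V{K}\subseteq\V{K}^{\perp_\beta}$ to $K\subseteq\polarity{\beta}(K)$ via \autoref{defn:polarity}.
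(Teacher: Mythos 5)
Your proof is correct and matches the paper's argument exactly: translate the dimension condition $k\leq\frac{n-1}{2}$ into $\Dim(\V{K})\leq\frac{1}{2}\Dim(V)$, invoke \autoref{lem:bilinearMap} to obtain $\beta$ with $\beta(\V{K},\V{K})=0$, and pass $\V{K}\subseteq\V{K}^{\perp_\beta}$ through projectivisation. The only addition you make is a brief remark on the degenerate case $k=-1$, which the paper omits since it is vacuously covered.
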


\begin{proof}
    Write $X=\Projectivisation{V}$ and let $U\leteq \V{K}\leq V$. 
    Now $\Dim(V)=\dim(X)+1=n+1$ and 
    $\Dim(U)=\dim(K)+1=k+1$, 
    so $\Dim(U)\leq \frac{1}{2}\Dim(V)$ by assumption. 
    Apply \autoref{lem:bilinearMap} to get a non-degenerate symmetric bilinear form $\beta\from V\times V\to \field{}$ such that $\beta(U,U)=0$. 
    Then $U\subseteq U^{\perp_\beta}$, so after going back to the projective space, 
    we have $K=\Projectivisation{U}\subseteq \Projectivisation{U^{\perp_\beta}}=\polarity{\beta}(K)$ as stated.
\end{proof}

\begin{cor}\label{lem:ballonAbsolute}
    Let $X$ be an $n$-dimensional projective space over an arbitrary field, and pick $K\in\GrassmannDimSpace{k}{X}$ with $-1\leq k\leq \frac{n-1}{2}$. 
    Then there exists a map $\polarity{}\from \GrassmannDimSpace{0}{K}\to \GrassmannDimSpace{n-1}{X}$ such that 
    \begin{enumerate}
        \item for every $P\in \GrassmannDimSpace{0}{K}$, we have $K\subseteq \polarity{}(P)$, and 
        \item for every $L\in \GrassmannDimSpace{1}{K}$, we have $\bigcup \setBuilder{\polarity{}(P)}{P\in L}=X$.
    \end{enumerate}
\end{cor}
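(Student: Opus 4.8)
The plan is to obtain the statement essentially for free by combining \autoref{cor:absolute} with \autoref{lem:ballon}. First, since the hypothesis $k\leq\frac{n-1}{2}$ is exactly what \autoref{cor:absolute} requires, I would invoke it to produce a polarity $\polarity{\beta}$ — arising from a non-degenerate symmetric bilinear form $\beta$ on the underlying vector space $V$, where $X=\Projectivisation{V}$ — for which $K$ is absolute, i.e. $K\subseteq\polarity{\beta}(K)$. I would then simply define the required map by $\polarity{}(P)\leteq\polarity{\beta}(P)$ for $P\in\GrassmannDimSpace{0}{K}$. This is well defined with the stated codomain: a point of $K$ is in particular a point of $X$, so $\GrassmannDimSpace{0}{K}\subseteq\GrassmannDimSpace{0}{X}$, and a polarity sends $\GrassmannDimSpace{0}{X}$ to $\GrassmannDimSpace{n-1}{X}$ by \autoref{defn:polarity} (dually, $\Dim_\field{}(\V{P})=1$ forces $\Dim_\field{}(\V{P}^{\perp_\beta})=n$).

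For property (1), I would note that $P\in\GrassmannDimSpace{0}{K}$ gives $P\subseteq K$, and that $\polarity{\beta}$ is inclusion-reversing (because taking $\beta$-orthogonal subspaces is), so $\polarity{}(P)=\polarity{\beta}(P)\supseteq\polarity{\beta}(K)\supseteq K$, where the final inclusion is precisely the absoluteness of $K$ secured above. For property (2), I would observe that any $L\in\GrassmannDimSpace{1}{K}$ is also an element of $\GrassmannDimSpace{1}{X}$, so \autoref{lem:ballon} applies verbatim to this line and yields $\bigcup\setBuilder{\polarity{}(P)}{P\in L}=\bigcup_{P\in L}\polarity{\beta}(P)=X$.

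I do not anticipate a genuine obstacle here: all the substantive work is already contained in the preceding lemmas. \autoref{lem:ballon} (a short rank/pigeonhole argument on the bilinear form) gives property (2) for \emph{any} polarity, and the only place the hypothesis $k\leq\frac{n-1}{2}$ enters is in \autoref{lem:bilinearMap}/\autoref{cor:absolute}, which supplies the absolute subspace needed for property (1). The only things worth double-checking are the routine bookkeeping — that $\GrassmannDimSpace{0}{K}\subseteq\GrassmannDimSpace{0}{X}$ and $\GrassmannDimSpace{1}{K}\subseteq\GrassmannDimSpace{1}{X}$, that a polarity carries points to hyperplanes, and the direction of the inclusion reversal — together with the degenerate boundary case $k=-1$, where $K=\emptyset$ makes $\GrassmannDimSpace{0}{K}=\GrassmannDimSpace{1}{K}=\emptyset$ and both conditions hold vacuously.
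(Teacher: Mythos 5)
Your argument is exactly the paper's one-line proof, spelled out in full: take the polarity from \autoref{cor:absolute} (which exists precisely because $k\leq\frac{n-1}{2}$), restrict it to $\GrassmannDimSpace{0}{K}$, get property (1) from inclusion-reversal plus absoluteness, and property (2) directly from \autoref{lem:ballon}. The bookkeeping checks you add are correct and do not change the route.
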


\begin{proof}
    The restriction of the polarity given by \autoref{cor:absolute} to $\GrassmannDimSpace{0}{K}$ is suitable because of \autoref{lem:ballon}.
    \end{proof}

We are now ready to construct the $(1,0)$-blocking sets as in \autoref{lem:construction21ImprovedGeneral}.
\begin{cor}\label{cor:ballonConstruction}
    Let $X$ be an $n$-dimensional projective space over an arbitrary field, and pick $K\in\GrassmannDimSpace{k}{X}$ with $-1\leq k\leq \frac{n-1}{2}$. 
    Then for every $P\in\GrassmannDimSpace{0}{K}$, there is a $(1,0)$-blocking set $\blocking_{X/P}=\GrassmannDimSpace{0}{H_P}\subseteq \GrassmannDimSpace{0}{X/P}$ in $X/P$ for suitable $H_P\in\GrassmannDimSpace{n-2}{X/P}$    
    such that 
    \begin{enumerate}
        \item $\GrassmannDimSpace{0}{K/P}\subseteq \blocking_{X/P}$ and 
        \item    for every $L\in\GrassmannDimSpace{1}{K}$, the set
    $\blocking_L\leteq \bigcup\setBuilder{\blocking_{X/P}\setminus\GrassmannDimSpace{0}{K/P}}{P\in \GrassmannDimSpace{0}{L}} \subseteq \GrassmannDimSpace{1}{X}$ blocks the set 
    $\cS_L\leteq \setBuilder{S\in\GrassmannDimSpace{2}{X}}{K\cap S=L}$.  
    \end{enumerate}
 
\end{cor}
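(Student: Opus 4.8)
The plan is to build each $H_P$ from a single polarity of $X$ that makes $K$ absolute, so that properties (1) and (2) fall out of the two conclusions of \autoref{lem:ballonAbsolute}. Concretely, I would first apply \autoref{lem:ballonAbsolute} to obtain a map $\polarity{}\from\GrassmannDimSpace{0}{K}\to\GrassmannDimSpace{n-1}{X}$ with $K\subseteq\polarity{}(P)$ for every $P\in\GrassmannDimSpace{0}{K}$, and with $\bigcup\setBuilder{\polarity{}(P)}{P\in L}=X$ for every line $L\in\GrassmannDimSpace{1}{K}$. Since $\dim(X/P)=n-1$ and $P\subseteq K\subseteq\polarity{}(P)$, the image $H_P\leteq\pi_P(\polarity{}(P))$ is a hyperplane of $X/P$, i.e. $H_P\in\GrassmannDimSpace{n-2}{X/P}$, so by the $t=0$ case of \autoref{lem:minimalBlockingForSmallST} (Bose--Burton) the set $\blocking_{X/P}\leteq\GrassmannDimSpace{0}{H_P}$ is a $(1,0)$-blocking set in $X/P$. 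Under the usual identification of $\GrassmannDimSpace{0}{X/P}$ with the lines of $X$ through $P$, $\blocking_{X/P}$ is exactly the set of lines through $P$ lying inside $\polarity{}(P)$; this is what makes $\blocking_{X/P}\setminus\GrassmannDimSpace{0}{K/P}$ a genuine subset of $\GrassmannDimSpace{1}{X}$, as \autoref{lem:construction21ImprovedGeneral} requires. Property (1) is then immediate: a line in $\GrassmannDimSpace{0}{K/P}$ is a line through $P$ contained in $K\subseteq\polarity{}(P)$, hence lies in $\blocking_{X/P}$.

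For property (2), fix $L\in\GrassmannDimSpace{1}{K}$ and a plane $S\in\cS_L$, so $K\cap S=L$, and pick any point $Q\in S\setminus L$. The covering conclusion of \autoref{lem:ballonAbsolute} -- which ultimately rests on the ``balloon'' \autoref{lem:ballon} -- provides a point $P_0\in\GrassmannDimSpace{0}{L}$ with $Q\in\polarity{}(P_0)$. I would then take $T\leteq\generated{P_0,Q}\in\GrassmannDimSpace{1}{X}$ and check: $T\subseteq S$ since $P_0,Q\in S$; $T\subseteq\polarity{}(P_0)$ since $P_0\in K\subseteq\polarity{}(P_0)$ and $Q\in\polarity{}(P_0)$; $T\neq L$ since $Q\in T\setminus L$; and $T\cap K=P_0$, because $T\cap K\subseteq S\cap K=L$ while two distinct lines through $P_0$ (namely $T$ and $L$) meet only in $P_0$. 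Hence $T$ is a line through $P_0$ inside $\polarity{}(P_0)$ but not inside $K$, i.e. $T\in\blocking_{X/P_0}\setminus\GrassmannDimSpace{0}{K/P_0}\subseteq\blocking_L$, and $T\subseteq S$; so $\blocking_L$ blocks $\cS_L$.

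I do not expect a real obstacle at this stage, since the substantive inputs -- existence of a polarity for which $K$ is absolute, and the line-covering property -- are already packaged in \autoref{lem:ballonAbsolute}, and the Bose--Burton bound is \autoref{lem:minimalBlockingForSmallST}. The only things needing care are bookkeeping: verifying the dimension count so that $H_P$ is indeed a hyperplane of the $(n-1)$-dimensional space $X/P$, and keeping the identification $\GrassmannDimSpace{0}{X/P}\leftrightarrow\{\text{lines of }X\text{ through }P\}$ consistent, so that the sets $\blocking_{X/P}\setminus\GrassmannDimSpace{0}{K/P}$ truly assemble into a subset of $\GrassmannDimSpace{1}{X}$ blocking $\cS_L$ in the precise sense used in \autoref{lem:construction21ImprovedGeneral}.
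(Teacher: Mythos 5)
Your proof is correct and follows essentially the same route as the paper: both take the polarity supplied by \autoref{lem:ballonAbsolute}, set $H_P=\pi_P(\polarity{}(P))$, derive property (1) from $K\subseteq\polarity{}(P)$, and derive property (2) by using the line-covering property to find $P_0\in L$ with $Q\in\polarity{}(P_0)$ and then taking $T=\generated{P_0,Q}$. The only cosmetic differences are that you invoke Bose--Burton for the $(1,0)$-blocking claim where the paper does a one-line dimension count, and you verify $T\cap K=P_0$ whereas the paper only needs (and only notes) the weaker $T\not\subseteq K$.
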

\begin{proof}
    Let $\polarity{}\from \GrassmannDimSpace{0}{K}\to \GrassmannDimSpace{n-1}{X}$ be the map from \autoref{lem:ballonAbsolute}. 
    We show that $H_P\leteq \pi_P(\polarity{}(P))\in\GrassmannDimSpace{n-2}{X/P}$ satisfies the statement.
    We claim that the resulting $\blocking_{X/P}$ is an $(1,0)$-blocking set in $X/P$. 
    Indeed, if $S\in\GrassmannDimSpace{1}{X/P}$, then 
    $\dim_{X/P}(H_P\cap S) = \dim_{X/P}(H_P) + \dim_{X/P}(H_P\cap S) - \dim_{X/P}(\generated{H_P, S}) \geq 
    n-2 + 1 - \dim(X/P) = 0$, 
    so for any $T\in H_P\cap S$, we have $T\in\blocking_{X/P}$ with $T\subset S$.

    For the first part, note that $K\subseteq \polarity{}(P)$ implies that $K/P=\pi_P(K)\subseteq \pi_P(\polarity{}(P))$, hence $\GrassmannDimSpace{0}{K/P}\subseteq \blocking_{X/P}$ by definition.

    For the second part,  pick $L\in \GrassmannDimSpace{1}{K}$. 
    Let $S\in \cS_L$. 
    Then by definition, we can pick $Q\in S\setminus K$. 
    Since $\bigcup_{P\in L}\polarity{}(P)=X$, there is a $P\in L$ such that $Q\in \polarity{}(P)$. 
    As $P\in L$ and $Q\notin L$, we necessarily have $P\neq Q$, so $\dim_X(T)=1$, thus  $T\in\blocking_{X/P}$ by definition. 
    On the other hand, $T\not\subseteq K$, because $Q\in T$ and $Q\notin K$, so $T\notin \GrassmannDimSpace{0}{K/P}$. 
    Hence $T\in \blocking_L$ and $T\subseteq S$, so $\blocking_L$ indeed blocks $\cS$.
\end{proof}

\begin{theorem}[Improved recursive construction for $(s,t)=(2,1)$]\label{lem:recursiveConsruction}
    Let $X$ be an $n$-dimensional projective space over an arbitrary field. 
    Let $K\in\GrassmannDimSpace{k}{X}$ with $-1\leq k\leq \min\{\frac{n-1}{2},n-3\}$. 
    Let $\blocking_K\subseteq \GrassmannDimSpace{1}{X}$ be a $(2,1)$-blocking set in $K$, 
    $\blocking_{X/K}$ be a $(2,1)$-blocking set in $X/K$ and 
    for any $P\in\GrassmannDimSpace{0}{K}$, let $\blocking_{X/P}\subseteq\GrassmannDimSpace{0}{X/P}$ be the $(1,0)$-blocking sets in $X/P$ given by \autoref{cor:ballonConstruction}. 
    Then the disjoint union \[
    \blocking\leteq 
    \blocking(-1) \disjointunion 
    \blocking(0) \disjointunion 
    \blocking(2)\]
    is a $(2,1)$-blocking set in $X$,
    where the parts are defined by $\blocking(-1)\leteq \setBuilder{T\in\GrassmannDimSpace{1}{X}}{\generated{K,T}\in\blocking_{X/K}}$, by the disjoint union 
    $\blocking(0)\leteq \disjointUnion\setBuilder{\blocking_{X/P}\setminus\GrassmannDimSpace{0}{K/P}}{P\in \GrassmannDimSpace{0}{K}}$
    and $\blocking(2)\leteq \blocking_K$.
\end{theorem}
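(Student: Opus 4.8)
The plan is to obtain this theorem as a direct specialisation of \autoref{lem:construction21ImprovedGeneral}, fed with the concrete $(1,0)$-blocking sets manufactured in \autoref{cor:ballonConstruction}. First I would note that the hypothesis $-1\leq k\leq\min\{\frac{n-1}{2},n-3\}$ is exactly what makes both earlier results applicable: the bound $k\leq n-3$ is the one required by \autoref{lem:construction21ImprovedGeneral} (and it also forces $\dim(X/K)=n-k-1\geq 2$, so that a ``$(2,1)$-blocking set in $X/K$'' is a meaningful object), while $k\leq\frac{n-1}{2}$ is the hypothesis of \autoref{cor:ballonConstruction}, ultimately coming from \autoref{cor:absolute}, i.e. from the existence of a polarity for which $K$ is absolute. (The set $\blocking_K$ always exists, possibly empty, so no lower bound on $k$ is needed.)

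Next I would invoke \autoref{cor:ballonConstruction}: it supplies, for each $P\in\GrassmannDimSpace{0}{K}$, a $(1,0)$-blocking set $\blocking_{X/P}=\GrassmannDimSpace{0}{H_P}$ in $X/P$ with the two properties that $\GrassmannDimSpace{0}{K/P}\subseteq\blocking_{X/P}$ and that, for every $L\in\GrassmannDimSpace{1}{K}$, the set $\bigcup\setBuilder{\blocking_{X/P}\setminus\GrassmannDimSpace{0}{K/P}}{P\in\GrassmannDimSpace{0}{L}}$ blocks $\cS_L=\setBuilder{S\in\GrassmannDimSpace{2}{X}}{K\cap S=L}$. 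This last property, stated there for \emph{all} lines $L$ of $K$, holds in particular for every $L\in\GrassmannDimSpace{1}{K}\setminus\blocking_K$, which is precisely the additional assumption demanded by \autoref{lem:construction21ImprovedGeneral}; the containment $\GrassmannDimSpace{0}{K/P}\subseteq\blocking_{X/P}$ is what guarantees that removing $\GrassmannDimSpace{0}{K/P}$ from $\blocking_{X/P}$ still leaves the relevant transversal lines available.

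Finally I would apply \autoref{lem:construction21ImprovedGeneral} with the data $\blocking_K$, $\blocking_{X/K}$, and $\{\blocking_{X/P}\}_{P\in\GrassmannDimSpace{0}{K}}$ assembled above. It outputs the $(2,1)$-blocking set $\setBuilder{T\in\GrassmannDimSpace{1}{X}}{\generated{K,T}\in\blocking_{X/K}}\disjointunion\bigl(\disjointUnion_{P\in\GrassmannDimSpace{0}{K}}(\blocking_{X/P}\setminus\GrassmannDimSpace{0}{K/P})\bigr)\disjointunion\blocking_K$, and it then only remains to match these three summands, term by term, with $\blocking(-1)$, $\blocking(0)$ and $\blocking(2)$ as named in the statement; the disjointness of the union is inherited verbatim from \autoref{lem:construction21ImprovedGeneral}.

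I do not expect a real obstacle here: the substantive work — choosing the hyperplanes $H_P$ so that the lines through $P$ inside $H_P$ jointly meet every plane $S$ with $K\cap S=L$ in a line disjoint from $K/P$ — has already been carried out in \autoref{cor:ballonConstruction} through \autoref{lem:ballon} and the polarity lemmas. What is left for this theorem is essentially notational bookkeeping plus the one genuine check that the constraint on $k$ simultaneously fits the hypotheses of \autoref{cor:ballonConstruction} and of \autoref{lem:construction21ImprovedGeneral}; the only place one might slip is in keeping the quotients $X/K$ versus $X/P$ (and the corresponding subspaces $K/P$) straight.
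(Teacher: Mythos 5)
Your proposal is correct and is exactly the paper's argument: the paper's proof is the one-liner ``By \autoref{cor:ballonConstruction}, \autoref{lem:construction21ImprovedGeneral} is applicable and gives the statement,'' and you have simply spelled out the same specialisation in more detail, including the correct observation that $k\leq n-3$ serves \autoref{lem:construction21ImprovedGeneral} while $k\leq\frac{n-1}{2}$ serves \autoref{cor:ballonConstruction}.
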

\begin{proof}[Proof of \autoref{lem:recursiveConsruction}]
    By \autoref{cor:ballonConstruction}, \autoref{lem:construction21ImprovedGeneral} is applicable and gives the statement.
\end{proof}

\begin{remark}\label{rem:MetschIsSpecialCase}
    Over finite fields, the $k=0$, $n=4$ case of this construction is essentially the same as in \cite{eis} where its minimality is also proved.
    The $k=1$, $n=5$ case is the same as \cite[Theorem 1.2]{Metsch}, and Metsch conjectures its minimality. 
    In accordance, the inequality of \autoref{recursion} generalises that of \autoref{thm:n<3s-2}. 
\end{remark}

\begin{remark}\label{optimalis}
    Note that the construction of \autoref{lem:recursiveConsruction} may not be optimal. 
    In \autoref{cor:ballonConstruction}, we constructed $\blocking_{X/P}$ so that $\GrassmannDimSpace{0}{K/P}\subseteq \blocking_{X/P}$ in order to minimise the disjoint union $\disjointUnion\setBuilder{\blocking_{X/P}\setminus\GrassmannDimSpace{0}{K/P}}{P\in \GrassmannDimSpace{0}{K}}$
    from \autoref{lem:construction21ImprovedGeneral}. 
    This ultimately imposed the condition $\dim(K)\leq \frac{\dim(X)-1}{2}$. 
    It may happen, if we require $\GrassmannDimSpace{0}{K/P}\cap \blocking_{X/P}$ to be large enough (but not necessarily the largest possible), then we may take a larger $K$ that may give altogether a smaller construction.

    Another way to potentially improve the construction is to carefully consider all possible choices of $t_i$, $s_i$ and $K$ from \autoref{partial}, c.f. \autoref{tab:tisi} for a natural choice, and \autoref{tab:tisi21} for a more complicated one giving rise to a smaller construction.
    Doing this analysis fully may be doable for small $(s,t)$, thereby finding the best possible construction of the form of \autoref{partial}.
    \end{remark}

The construction of \autoref{lem:recursiveConsruction} gives the following recursive upper bound for $f(n,q)$.
\begin{prop}[Recursive upper bound]\label{recursion}
    For any integer $n\geq -1$ and $-1\leq k\leq \frac{n-1}{2}$, we have     
    \begin{equation*}
      f(n,q)\leq q^{2k+2}f(n-k-1,q) + f(k,q) + \sum_{i=0}^k q^i \sum_{j=k}^{n-2} q^j
    \end{equation*}  
\end{prop}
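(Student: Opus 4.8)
The plan is to count the three disjoint parts $\blocking(-1)$, $\blocking(0)$, $\blocking(2)$ of the blocking set produced by \autoref{lem:recursiveConsruction}, taking each $\blocking_K$, $\blocking_{X/K}$ of minimum size, and summing. Concretely, I would set $X=\PG(n,q)$ and $K\in\GrassmannDimSpace{k}{X}$, so $X/K=\PG(n-k-1,q)$, and choose $\blocking_{X/K}$ to be a minimum $(2,1)$-blocking set in $X/K$, i.e.\ $\cardinality{\blocking_{X/K}}=f(n-k-1,q)$, and $\blocking_K$ a minimum $(2,1)$-blocking set in $K=\PG(k,q)$, i.e.\ $\cardinality{\blocking_K}=f(k,q)$. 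For $\blocking_{X/P}$ I would use exactly the sets $\GrassmannDimSpace{0}{H_P}$ furnished by \autoref{cor:ballonConstruction}.

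\textbf{Counting $\blocking(2)$ and $\blocking(-1)$.} The part $\blocking(2)=\blocking_K$ contributes $f(k,q)$ by choice. For $\blocking(-1)=\setBuilder{T\in\GrassmannDimSpace{1}{X}}{\generated{K,T}\in\blocking_{X/K}}$, I would apply \autoref{lem:constructionFibreSize} with $t=1$: this gives $\cardinality{\blocking(-1)}=q^{2(k+1)}\cardinality{\blocking_{X/K}}=q^{2k+2}f(n-k-1,q)$. These two match the first two terms on the right-hand side of the claimed inequality exactly.

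\textbf{Counting $\blocking(0)$.} This is the main work. We have $\blocking(0)=\disjointUnion_{P\in\GrassmannDimSpace{0}{K}}\big(\blocking_{X/P}\setminus\GrassmannDimSpace{0}{K/P}\big)$, a genuinely disjoint union since distinct $P$ give lines through different points of $K$. For a fixed $P$, $\blocking_{X/P}=\GrassmannDimSpace{0}{H_P}$ where $H_P$ is a hyperplane of $X/P=\PG(n-1,q)$, so $\cardinality{\blocking_{X/P}}=\qbinom{n-1}{1}{q}=(q^{n-1}-1)/(q-1)$, which counts lines of $X$ through $P$ lying in a fixed hyperplane containing $P$. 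By part (1) of \autoref{cor:ballonConstruction}, $\GrassmannDimSpace{0}{K/P}\subseteq\blocking_{X/P}$, and $\cardinality{\GrassmannDimSpace{0}{K/P}}=\qbinom{k}{1}{q}=(q^k-1)/(q-1)$ counts the lines of $K$ through $P$. Hence $\cardinality{\blocking_{X/P}\setminus\GrassmannDimSpace{0}{K/P}}=\big((q^{n-1}-1)-(q^k-1)\big)/(q-1)=q^k(q^{n-1-k}-1)/(q-1)=\sum_{j=k}^{n-2}q^j$. Summing over the $\cardinality{\GrassmannDimSpace{0}{K}}=\qbinom{k+1}{1}{q}=\sum_{i=0}^k q^i$ points $P$ of $K$ yields $\cardinality{\blocking(0)}=\big(\sum_{i=0}^k q^i\big)\big(\sum_{j=k}^{n-2}q^j\big)$, which is the third term.

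\textbf{Assembling and edge cases.} Adding the three contributions gives $f(n,q)\le \cardinality{\blocking}=q^{2k+2}f(n-k-1,q)+f(k,q)+\sum_{i=0}^k q^i\sum_{j=k}^{n-2}q^j$, as claimed. I would finish by checking the degenerate ranges: for $n\le 1$ both sides are $0$ (the sums are empty or $f$-values vanish); the hypothesis $-1\le k\le\frac{n-1}{2}$ ensures $k\le n-3$ whenever $n\ge 2$ so that \autoref{lem:recursiveConsruction} applies (when $n=2$ only $k=-1$ is allowed, recovering the trivial bound), and also ensures the absoluteness condition $k\le\frac{n-1}{2}$ needed by \autoref{cor:ballonConstruction}. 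The main obstacle is purely bookkeeping: making sure the union in $\blocking(0)$ is disjoint from $\blocking(-1)$ and $\blocking(2)$ as well --- this follows because lines in $\blocking(0)$ meet $K$ in exactly one point, lines in $\blocking(2)=\blocking_K$ lie in $K$, and lines in $\blocking(-1)$ meet $K$ in the empty set by the dimension formula $\dim(K\cap T)=\dim(T)-\dim(\generated{K,T})+\dim(K)-\dim(X)$ together with $\generated{K,T}\in\blocking_{X/K}\subseteq\GrassmannDimSpace{1}{X/K}$ forcing $\dim(K\cap T)=-1$ --- exactly the disjointness already recorded in \autoref{lem:construction21ImprovedGeneral}.
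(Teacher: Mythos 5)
Your proof takes essentially the same route as the paper: invoke \autoref{lem:recursiveConsruction} with minimum-size $\blocking_K$, $\blocking_{X/K}$, count $\blocking(-1)$ via \autoref{lem:constructionFibreSize}, count $\blocking(0)$ using $\cardinality{\blocking_{X/P}\setminus\GrassmannDimSpace{0}{K/P}}=\sum_{j=k}^{n-2}q^j$ and sum over the $\sum_{i=0}^k q^i$ points of $K$, and add $\cardinality{\blocking(2)}=f(k,q)$. Your Gaussian-binomial bookkeeping for $\blocking(0)$ matches the paper's formula $\grassmannDimSpace{0}{n-2,q}-\grassmannDimSpace{0}{k-1,q}$ exactly.

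One small slip in your edge-case discussion: the implication ``$k\le\frac{n-1}{2}\Rightarrow k\le n-3$'' requires $n\ge 4$ (for integer $k$), not $n\ge 2$. For $n=2$ the hypothesis allows $k=0$ (not just $k=-1$ as you state), and for $n=3$ it allows $k=1$, and in both cases $k\le n-3$ fails, so \autoref{lem:recursiveConsruction} is not applicable. The paper handles this cleanly by verifying the cases $n\le 3$ directly against the known values of $f(n,q)$ before applying the construction for $n\ge 4$; you should do the same rather than claiming the construction applies down to $n=2$.
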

\begin{proof}
    We can check the $n\leq 3$ cases directly using the known values of $f(n,q)$ from \autoref{defn:f} and \autoref{thm:f234}. 
    
    Assume now that $n\geq 4$. Note that in this case the assumption $k\leq \frac{n-1}{2}$ for the integer $k$ is equivalent to $k\leq \min\{\frac{n-1}{2},n-3\}$, so \autoref{lem:recursiveConsruction} is applicable. 
    Let $X=\PG(n,q)$,  $K\in\GrassmannDimSpace{k}{X}$ and 
    let $\blocking\leteq 
    \blocking(-1) \disjointunion 
    \blocking(0) \disjointunion 
    \blocking(2) \subseteq \GrassmannDimSpace{1}{X}$ be the $(2,1)$-blocking set given by 
    \autoref{lem:recursiveConsruction} where 
    we take $\blocking_{X/K}$ and $\blocking_K$ to be minimal. Note that $\blocking_{X/P}$ are minimal $(1,0)$-blocking sets by \autoref{lem:minimalBlockingForSmallST}.
    Then by \autoref{lem:constructionFibreSize} and the construction, we have the following.
    \begin{align*}
        \cardinality{\blocking(-1)}&=q^{2(k+1)}\cardinality{\blocking_{X/K}} = q^{2k+2}f(n-k-1,q)
        \\
        \cardinality{\blocking(0)} &=
        \sum_{\mathclap{P\in \GrassmannDimSpace{0}{K}}} \cardinality{\blocking_{X/P}\setminus\GrassmannDimSpace{0}{K/P}} = 
        \sum_{\mathclap{P\in \GrassmannDimSpace{0}{K}}} \grassmannDimSpace{0}{n-2,q}-\grassmannDimSpace{0}{k-1,q}  = 
        \grassmannDimSpace{0}{k,q} \sum_{j=k}^{n-2} q^j
        = \sum_{i=0}^k q^i \sum_{j=k}^{n-2} q^j
        \\
        \cardinality{\blocking(2)}&=\cardinality{\blocking_K}=f(k,q)
    \end{align*}
    Adding the equations in the cases above yields the statement.
\end{proof}

Unlike \autoref{rem:basinConstructionStepInvariant}, the construction in \autoref{lem:recursiveConsruction} can be improved by recursive applications. 
In \autoref{sec:recursiveUpperBOund}, we analyse the recursion of \autoref{recursion} and determine the optimal choice for $k$.

\subsection{Computing the recursive upper bound on \texorpdfstring{$f(n,q)$}{f(n,q)}}\label{sec:recursiveUpperBOund}
For a fixed prime power $q$, we start from the known exact values for $f(3,q)$ and $f(4,q)$ (see \autoref{thm:f234}), and use \autoref{recursion} to recursively find upper bounds for $f(n,q)$ for each $n\ge 5$. Iteratively for each $n\ge 5$, we would like to choose the value $0\le k\le \frac{n-1}{2}$ giving the best upper bound. We will now state and prove a result showing that the best value is the unique value $0\le k\le \frac{n-1}{2}$ such that $n-k$ is a power of $2$.

\begin{defn}
Let $n$ and $k$ be integers. Let us say that the pair $(n,k)$ is \emph{valid} if $-1\le k\le \frac{n-1}{2}$.
\end{defn}

\begin{defn}\label{one_step}
Let $n\ge 1$, and suppose that $\phi$ is a function defined on $[-1, n-1]$ such that $\phi(-1)=0$. For $-1\le k\le \frac{n-1}{2}$, denote by $u(\phi, n, q, k)$ the upper bound appearing in an application of \autoref{recursion}, that is:

$$u(\phi,n,q,k)=q^{2k+2}\phi(n-k-1)+\phi(k)+\sum_{i=0}^k q^i \sum_{j=k}^{n-2} q^j.$$
\end{defn}

Note that the choice of $k=-1$ corresponds to a trivial application of  \autoref{recursion} where the right-hand side is simply $\phi(n)$.

\begin{remark}
Note that to start the recursion when computing $f(n,q)$, we can use the trivial cases $f(-1,q)=f(0,q)=f(1,q)=0$, $f(2,q)=1$, and also the values $f(3,q)=q^2+1$ and $f(4,q)=q^4+2q^2+q+1$ as seen in \autoref{thm:f234}.
\end{remark}

\begin{defn}
Define the function $f^\opt(n,q)$ as follows. Let $f^\opt(n,q)=f(n,q)$ for all $-1\le n\le 4$, and then recursively define 
\[f^\opt(n,q)=\min_{0\le k\le \frac{n-1}{2}} u(f^\opt(\cdot,q),n,q,k)\]
for all $n\ge 5$.
\end{defn}

For a positive integer $n$, if we allow $0\le k\le \frac{n-1}{2}$ then the possible range of $n-k$ is $\left[\left\lceil \frac{n+1}{2}\right\rceil, n\right]$
, which contains a unique power of two.

\begin{defn}
For a positive integer $n$, let $k^{*}(n)$ be the unique value $0\le k\le \frac{n-1}{2}$ such that $n-k$ is a power of two. 
Namely, $k^*(n)=n-2^{\left\lfloor \log _2(n)\right\rfloor}$.
\end{defn}

\begin{prop}\label{powers_of_2}
For all $N\ge 5$ there exists $q_0(N)$ such that we have $f^\opt(N,q)=u(f^\opt(\cdot,q),N,q,k^*(N))$ for all prime powers $q\ge q_0(N)$.
\end{prop}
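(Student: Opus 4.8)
The plan is to show that among all valid choices $0\le k\le \frac{n-1}{2}$, the minimiser of $u(f^\opt(\cdot,q),n,q,k)$ is $k^*(n)$, for $q$ sufficiently large. The key observation is that the leading behaviour of $u(\phi,n,q,k)$ as a polynomial in $q$ is governed by the term $q^{2k+2}\phi(n-k-1)$, since (by a straightforward induction using \autoref{recursion}) each $f^\opt(m,q)$ is a polynomial in $q$ of degree $2m-4$ with leading coefficient $1$, so $q^{2k+2}f^\opt(n-k-1,q)$ has degree $2k+2+2(n-k-1)-4=2n-4$, independent of $k$; the double sum $\sum_{i=0}^k q^i\sum_{j=k}^{n-2}q^j$ has degree $k+(n-2)\le \frac{n-1}{2}+n-2 < 2n-4$ for $n\ge 4$, and $\phi(k)$ has even smaller degree. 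So all valid $k$ give the same leading term $q^{2n-4}$, and the competition is decided by lower-order coefficients. First I would make this precise: write $f^\opt(m,q)=q^{2m-4}+c_1(m)q^{2m-5}+c_2(m)q^{2m-6}+\cdots$ and track enough coefficients of $u(f^\opt(\cdot,q),n,q,k)$ to separate the candidates.

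The heart of the argument is to understand how the coefficients depend on $k$, and here the recursive/self-similar structure should be exploited rather than brute-forced. I would set up a ``defect'' function measuring $u(f^\opt(\cdot,q),n,q,k)-q^{2n-4}$ (or a suitably normalised version) and argue that, modulo lower-order terms, choosing $k$ then recursing on the block of size $n-k-1$ is like writing $n$ (or $n$ minus a constant) in a mixed-radix/binary fashion: the ``cost'' accumulated is minimised precisely when the recursion proceeds by halving, i.e. when $n-k-1$ is itself as close as possible to a power of two at every stage, which forces $n-k=2^{\lfloor\log_2 n\rfloor}$ at the top step. Concretely, I expect to prove a lemma of the form: for $q$ large, $u(f^\opt(\cdot,q),n,q,k)$ as a polynomial in $q$ is lexicographically minimised (in its coefficient sequence, from the top down) at $k=k^*(n)$; the comparison between two candidate values $k<k'$ reduces to comparing $q^{2k+2}f^\opt(n-k-1,q)$ with $q^{2k'+2}f^\opt(n-k'-1,q)$ plus controlled error, and since $f^\opt$ of a larger argument beats $q^{2(k'-k)}$ times $f^\opt$ of a smaller one exactly when the larger argument's ``binary defect'' is smaller, the power-of-two choice wins.

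I would organise the write-up as: (1) a lemma establishing the polynomial form and degree of $f^\opt(m,q)$ by induction on $m$ via \autoref{recursion}; (2) a lemma computing the top few coefficients of $u(f^\opt(\cdot,q),n,q,k)$ in terms of those of $f^\opt(n-k-1,q)$ and elementary sums, making the $k$-dependence explicit and isolating which coefficient first distinguishes the candidates; (3) the pairwise comparison showing $k^*(n)$ is optimal, with an explicit threshold $q_0(N)$ emerging from requiring the dominant distinguishing coefficient to outweigh all lower-order error terms (these are all polynomials in $q$ with bounded-in-$N$ — though the bound may grow with $N$ — coefficients, so a finite $q_0(N)$ exists). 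The main obstacle I anticipate is step (2)–(3): cleanly capturing the $k$-dependence of the relevant coefficients without an unwieldy case analysis, since $f^\opt$ is itself defined by a minimisation, so one cannot assume a closed form — one must carry the induction hypothesis ``$f^\opt(m,q)$ achieves its recursion at $k^*(m)$'' through the comparison. Getting the induction to close — i.e. proving the statement for $N$ assuming it for all smaller arguments, and verifying that the thresholds $q_0$ can be chosen consistently — is where the real work lies; the rest is bookkeeping with Gaussian binomial expansions and geometric sums.
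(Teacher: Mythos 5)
Your high-level framing is consistent with the paper's: fix the leading term $q^{2n-4}$, compare lower-order coefficients lexicographically, and prove the claim by induction on $N$ with $q_0(N)$ chosen large. You even identify the correct obstacle — that $f^\opt$ is defined by a minimisation, so one cannot compare $u(f^\opt(\cdot,q),n,q,k)$ across different $k$ without additional structure. But the proposal stops short of supplying the two ideas that actually resolve this obstacle, and the piece you \emph{do} propose as the resolution would not work as stated.

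First, the paper does not work with $f^\opt$ directly; it introduces the explicit anchor polynomial $f^*(n,q)$ defined by \emph{always} recursing with $k=k^*(n)$, proves structure about $f^*$ alone (degree, leading coefficient, and — crucially — that for $3\le n<n'$ the first coefficient difference between $f^*(n,q)$ and $f^*(n',q)$ sits exactly at the $n$-th position and equals $1$), and only at the end shows $f^\opt=f^*$ by induction. Your sketch gestures at ``carrying the induction hypothesis'' but does not name this concrete device, and the quantitative statement about \emph{where} the coefficient sequences of $f^*(n,q)$ and $f^*(n',q)$ first differ is precisely what is needed and is not deducible from degree bookkeeping alone.

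Second, and more seriously, your proposed pairwise comparison — ``the comparison between two candidate values $k<k'$ reduces to comparing $q^{2k+2}f^\opt(n-k-1,q)$ with $q^{2k'+2}f^\opt(n-k'-1,q)$ plus controlled error'' — is exactly the one-step comparison, and it does not cleanly decide the question. Both polynomials have degree $2n-4$, and the first place they differ is at order $q^{n+k'-2}$ (from the anchor-polynomial coefficient structure); but the double sum $\sum_{i=0}^{k}q^i\sum_{j=k}^{n-2}q^j$ in $u$ also contributes a term of order $q^{n+k-2}$ or $q^{n+k'-2}$ depending on the candidate, i.e.\ of the \emph{same} order as the distinguishing term, with the opposite sign. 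These are not ``controlled lower-order error'' — they compete at the decisive coefficient, so the one-step comparison is inconclusive without further unfolding. The paper's resolution is a genuinely different move: a \emph{two-step} comparison lemma (the paper's Lemma on $u(\phi,n,q,k_1,k_2)$) that fixes the total jump $k_1+k_2$ and compares two decompositions of it, showing that front-loading the jump wins with an explicit dominant difference term $q^{2k_1+k_2}$. Both cases $k'<k^*(N)$ and $k'>k^*(N)$ are then handled by unfolding one extra level so that both sides reach the same power-of-two endpoint, at which point the two-step lemma applies. This unfolding is the ingredient your proposal is missing, and without it the pairwise comparison you propose would not close.

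In short: correct skeleton, correct identification of the difficulty, but the mechanism you offer for overcoming it (direct one-step comparison ``plus controlled error'') would fail, and the anchor sequence $f^*$ together with the two-step comparison lemma that actually carry the paper's proof are absent from the proposal.
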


To prove this proposition, we consider an anchor sequence $f^*(n,q)$ which uses the values $k=k^*(n)$ and we prove two properties of this sequence, eventually showing that $f^\opt(n,q)=f^*(n,q)$ for all $-1\le n\le N$ and $q\ge q_0(N)$.

\begin{defn}\label{fstar_recursive}
For integers $n\ge -1$, define $f^*(n,q)$ as follows: let $f^*(n,q)=f(n,q)$ for $-1\le n\le 4$, and then recursively define $f^*(n,q)=u(f^*(\cdot,q),n,q,k^*(n))$ for all $n\ge 5$.
\end{defn}

By this definition, it is clear that $f^*(n,q)$ is a polynomial in $q$ for all $n$.

A polynomial in $q$ can be described by its \textit{coefficient sequence}, which starts with the leading coefficient, lists coefficients in order of decreasing power, and is considered to end with an infinite sequence of zeroes. For example, the coefficient sequence of the polynomial $x^3-5x+4$ is $(1,0,-5,4,0,0,0,...)$. Two polynomials (of possibly different degrees) can be lexicographically compared via their coefficient sequence: if $f(q),g(q)\in \mathbb{R}[q]$ have coefficient sequences $(a_1,a_2,a_3,...)$ and $(b_1,b_2,b_3,...)$, then we say $f<_\lex g$ if there exists $i\ge 1$ such that $a_i<b_i$, and $a_j=b_j$ for all $1\le j\le i-1$.

\begin{lemma}\label{starpoly_lemma}
The polynomials $f^*(n,q)$ satisfy the following:
\begin{enumerate}[label=(\alph*)]
\item For each $n\ge 2$, $\deg f^*(n,q)=2n-4$ and its leading coefficient is $1$.
\item For any $3\le n<n'$, we have $f^*(n,q)<_\lex f^*(n',q)$ and the first difference between the coefficient sequences is in the term of $n$-th highest order, where the two terms differ by exactly 1.
\end{enumerate}
\end{lemma}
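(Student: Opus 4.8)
The plan is to prove both statements simultaneously by induction on $n$, since part~(a) feeds into the degree bookkeeping needed for part~(b), and part~(b) controls which term dominates in the recursion. First I would pin down the base cases: for $n\in\{2,3,4\}$ we have $f^*(n,q)=f(n,q)$ explicitly from \autoref{thm:f234}, namely $1$, $q^2+1$, $q^4+2q^2+q+1$, which have degrees $0,2,4$ and leading coefficient $1$, and one checks directly that $f^*(2,q)<_\lex f^*(3,q)<_\lex f^*(4,q)$ with the claimed "off by one in the $n$-th highest order term" behaviour (here the relevant comparison is between consecutive ones; for the general $n<n'$ statement in (b) it suffices to chain the consecutive comparisons once we know the degrees are strictly increasing and the leading coefficient is always $1$, so the first discrepancy between $f^*(n,\cdot)$ and $f^*(n',\cdot)$ sits exactly where $f^*(n',\cdot)$ still has its genuine degree-$2n'-4$ behaviour but, more to the point, where $f^*(n,\cdot)$ "runs out" — I'll need to be careful to phrase this as: the coefficient sequences agree down to order $2n-3$ and differ by $1$ at order... — see the obstacle paragraph).

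For the inductive step with $n\ge 5$, write $k=k^*(n)$, so $n-k-1=2^{\lfloor\log_2 n\rfloor}-1\eqlet m$ and note $m<n$ and $k<n$, so the inductive hypothesis applies to $f^*(m,q)$ and $f^*(k,q)$. Plugging into the formula of \autoref{one_step},
\[
f^*(n,q)=q^{2k+2}f^*(m,q)+f^*(k,q)+\sum_{i=0}^{k}q^i\sum_{j=k}^{n-2}q^j,
\]
I would compute the degree of each of the three summands. By the inductive hypothesis $\deg f^*(m,q)=2m-4$, so the first term has degree $2k+2+2m-4=2(k+m)-2=2(n-1)-2=2n-4$, with leading coefficient $1$ (product of leading coefficient $1$ of $f^*(m,q)$ with the monic $q^{2k+2}$). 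The second term $f^*(k,q)$ has degree $2k-4<2n-4$ since $k\le\frac{n-1}{2}<n-2$ for $n\ge 5$. The double sum equals $\bigl(\sum_{i=0}^k q^i\bigr)\bigl(\sum_{j=k}^{n-2}q^j\bigr)$, a polynomial of degree $k+(n-2)\le \frac{n-1}{2}+n-2<2n-4$ for $n\ge 5$. Hence the first term strictly dominates, giving $\deg f^*(n,q)=2n-4$ with leading coefficient $1$, which is part~(a).

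For part~(b) at the inductive step, I would compare the coefficient sequence of $f^*(n,q)=q^{2k+2}f^*(m,q)+(\text{lower order junk of degree}<2n-4)$ with that of $f^*(m,q)$ — but actually the cleanest route is: since $q^{2k+2}f^*(m,q)$ is just $f^*(m,q)$ shifted up by $2k+2$ in degree, and the two correction terms have degree $\le \max\{2k-4,\,k+n-2\}$ which is strictly less than $(2k+2)+(2m-4)-\text{(something)}$... I need the correction terms to lie strictly below the range where $f^*(m,q)$ shifted up still has its genuine low-order coefficients, i.e.\ below order $2k+2$, equivalently their degree $<2k+2$; but $k+n-2$ is \emph{not} $<2k+2$ in general, so the correction terms overlap the tail of $q^{2k+2}f^*(m,q)$. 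This overlap is exactly the main obstacle: one has to track coefficients of $f^*(n,q)$ only in the top $n+\bigl(\text{few}\bigr)$ orders — down to order $2n-3$ or so — where, because $m\ge \lceil (n+1)/2\rceil$, the shifted polynomial $q^{2k+2}f^*(m,q)$ still looks exactly like $f^*(m,q)$'s leading behaviour and the two correction terms (of order $\le k+n-2\le 2n-4-\lceil(n-1)/2\rceil$, comfortably below $2n-3$ once $n$ is large) do not yet interfere. So I would (i) argue $k+n-2<2n-3$ and $2k-4<2n-3$ for $n\ge 5$ to isolate the genuinely top $\approx n$ coefficients of $f^*(n,q)$ as coming purely from $q^{2k+2}f^*(m,q)$, (ii) compare these with the top coefficients of $f^*(m,q)$ — they literally agree up to the degree shift — and then (iii) invoke the inductive hypothesis relating $f^*(m,q)$ to $f^*(n',q)$, or more simply set up (b) as a statement comparing $f^*(n,\cdot)$ with $f^*(n-1,\cdot)$ and then transitively with all $f^*(n',\cdot)$, $n'>n$. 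The bookkeeping of "how far down do the top coefficients of $f^*(n,q)$ remain uncontaminated" is the delicate part; everything else is degree arithmetic that the induction hypothesis makes routine.
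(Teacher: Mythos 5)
Your part (a) argument matches the paper's and is fine (modulo the small caveat that for $k\in\{0,1\}$ the term $f^*(k,q)$ vanishes rather than having degree $2k-4$, which you handle implicitly).

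For part (b), however, the plan has a genuine gap, and you half-notice it yourself. You correctly reduce (via the transitivity observation at the end) to the consecutive case $n'=n+1$, and you correctly identify the obstacle: the correction terms $f^*(k,q)+\sum_{i=0}^{k}q^i\sum_{j=k}^{n-2}q^j$ have degree $k+n-2$, which lies inside the support of $q^{2k+2}f^*(m,q)$. But your proposed fix --- ``isolate the top $\approx n$ coefficients of $f^*(n,q)$ as coming purely from $q^{2k+2}f^*(m,q)$'' --- does not work. The contamination begins at coefficient position $(2n-4)-(k+n-2)+1=n-k-1=m$, so only the top $m-1$ coefficients are uncontaminated, and $m-1=n-k-2$ is always strictly less than $n$; indeed when $n$ is just below $2^{d+1}$, one has $m-1\approx n/2$, so you would only control roughly half the coefficients you need. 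Comparing $f^*(n,q)$ with $f^*(m,q)$ therefore cannot by itself reach position $n$.

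The mechanism you are missing is a cancellation specific to consecutive $n$, and it requires a case split. Write $k=k^*(n)$. If $n=2^{d}-1$, then $k^*(n+1)=0$ and directly $f^*(n+1,q)=q^2f^*(n,q)+\sum_{j=0}^{n-1}q^j$, so $f^*(n+1,q)-q^2f^*(n,q)$ is a polynomial with leading term $q^{n-1}$, which sits exactly at position $n$ of $f^*(n+1,q)$. Otherwise $k^*(n+1)=k+1$ and, crucially, $n-k-1=(n+1)-(k+1)-1\eqlet n_0$ is the \emph{same} argument in both recursions. Hence in
\[
f^*(n+1,q)-q^2f^*(n,q)
\]
the $q^{2k+4}f^*(n_0,q)$ terms cancel exactly, and the two double sums telescope to $\sum_{j=k+1}^{n-1}q^j$, leaving $f^*(k+1,q)-q^2f^*(k,q)+\sum_{j=k+1}^{n-1}q^j$. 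The first piece has degree $k-1$ (by the inductive hypothesis applied at the \emph{lower} index $k$, not at $m$), so again $q^{n-1}$ dominates, landing at position $n$. This exact cancellation of the high-degree blocks --- not an uncontaminated-window argument --- is what makes the comparison work all the way down to position $n$; without it your step (i) does not isolate enough coefficients.
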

\begin{proof}
For part (a), the statement clearly holds for $2\le n\le 4$. For $n\ge 5$, taking $k=k^*(n)$ we have

$$u(f^*(\cdot, q), n, q, k)=q^{2k+2}f^*(n-k-1,q)+f^*(k,q)+\sum_{i=0}^k q^i \sum_{j=k}^{n-2} q^j.$$

By induction, the first term has degree $(2k+2)+2(n-k-1)+4=2n-4$ with coefficient $1$, the second term has degree $2k-4<2n-4$ if $k\ge 2$ (or is $0$ if $0\le k\le 1$), and the third term has degree $n+k-2<2n-4$.

For part (b), it is sufficient to prove the result for $n'=n+1$ (where $n\ge 3$). We distinguish two cases.\smallskip

\noindent \textit{Case 1:} $n=2^d-1$ for some integer $d$.

In this case, $k^*(n+1)=0$ and so $f^*(n+1,q)=q^2f^*(n,q)+\sum_{j=0}^{n-1} q^j$. Therefore, $f^*(n+~1,q)-q^2f^*(n,q)$ has leading coefficient $q^{n-1}$, and since $\deg f^*(n+1,q)=2n-2$, this corresponds precisely to the $n$-th highest order term of the two polynomials.\smallskip

\noindent \textit{Case 2:} $2^d\le n\le 2^{d+1}-2$ for some integer $d$.

In this case, let $k=k^*(n)$; then $k^*(n+1)=k+1$. Denote $n_0=n-k-1$ which is one less than a power of 2. Then we have

$$f^*(n,q)=q^{2k+2}f^*(n_0,q)+f^*(k,q)+\sum_{i=0}^k q^i+\sum_{j=k}^{n-2} q^j$$

$$f^*(n+1,q)=q^{2k+4}f^*(n_0,q)+f^*(k+1,q)+\sum_{i=0}^{k+1} q^i\sum_{j=k+1}^{n-1} q^j$$

and we need to show that the leading term of $f^*(n+1,q)-q^2f^*(n,q)$ is $q^{n-1}$.

The difference is $f^*(k+1,q)-q^2f^*(k,q)+\sum_{j=k+1}^{n-1} q^j$. Here by the inductive hypothesis, $f^*(k+1,q)-q^2f^*(k,q)$ has leading term $q^{k-1}$ if $k\ge 3$ (or is constant if $0\le k\le 2$), so the top term $q^{n-1}$ dominates (it is easy to check that $k+1\le n-1$ for all $n\ge 2$, so this term does exist).

\end{proof}

In order to show \autoref{powers_of_2}, we will consider the `jumps' (i.e. values $k+1$) in applications of \autoref{recursion}. We will show in a crucial lemma that when applying the upper bound twice in a row (with a fixed sum of jumps), it is always advantageous to start with a jump as large as possible.

For this, we extend \autoref{one_step} to making two consecutive recursive steps using \autoref{recursion}: first with $k=k_1$ to bound $f(n)$, then with $k=k_2$ to bound $f(n-k_2-1)$.

\begin{defn}\label{two_step}
Let $n\ge 1$ and let $\phi$ be a function defined on $\{-1, 0, ..., n-1\}$ with $\phi(-1)=0$. If $k_1$ and $k_2$ are integers such that $(n,k_2)$ and $(n-k_2-1,k_1)$ are valid pairs, then let

$$u(\phi,n,q,k_1,k_2)=q^{2k_1+2}u(\phi,n-k_1-1,q,k_2)+\phi(k_1)+\sum_{i=0}^{k_1} q^i \sum_{j=k_1}^{n-2} q^j.$$
\end{defn}

\begin{lemma}\label{further_better}
Suppose that for integers $n\ge 1$ and $k_1,k_2,\ell_2\ge 0$ and $\ell_1\ge -1$, we have $k_1+k_2=\ell_1+\ell_2=d$ and $k_2<\ell_2$. Furthermore suppose that the pairs $(n+d+2,k_1)$, $(n+k_2+1,k_2)$, $(n+d+2, \ell_1)$, $(n+\ell_2+1, \ell_2)$ are valid. Then we have $$u(f^*(\cdot,q),n+d+2,q,k_1,k_2) > u(f^*(\cdot,q),n+d+2, q, \ell_1, \ell_2)$$ for $q$ large enough, and considering both sides as polynomials in $q$, the main term in the difference between the two sides is $q^{2k_1+k_2}$.
\end{lemma}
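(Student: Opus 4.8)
The plan is to set $N\leteq n+d+2$, expand both quantities via \autoref{two_step} and then \autoref{one_step} into honest polynomials in $q$, and compare leading terms. Since $N-k_1-1-k_2-1=n$ and $N-2=n+d$, one gets
\begin{align*}
u(f^*(\cdot,q),N,q,k_1,k_2)=q^{2d+4}f^*(n,q)&+q^{2k_1+2}f^*(k_2,q)+f^*(k_1,q)\\
&+q^{2k_1+2}\sum_{i=0}^{k_2}q^i\sum_{j=k_2}^{n+k_2-1}q^j+\sum_{i=0}^{k_1}q^i\sum_{j=k_1}^{n+d}q^j,
\end{align*}
and the same formula with $(\ell_1,\ell_2)$ in place of $(k_1,k_2)$, using only $\ell_1+\ell_2=d$. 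The key structural point is that the top contribution $q^{2d+4}f^*(n,q)$ depends only on $d$, not on the splitting, hence it cancels in the difference. So, setting $\Delta_1\leteq q^{2k_1+2}f^*(k_2,q)-q^{2\ell_1+2}f^*(\ell_2,q)$, $\Delta_3\leteq f^*(k_1,q)-f^*(\ell_1,q)$, $\Delta_2\leteq q^{2k_1+2}\sum_{i=0}^{k_2}q^i\sum_{j=k_2}^{n+k_2-1}q^j-q^{2\ell_1+2}\sum_{i=0}^{\ell_2}q^i\sum_{j=\ell_2}^{n+\ell_2-1}q^j$ and $\Delta_4\leteq \sum_{i=0}^{k_1}q^i\sum_{j=k_1}^{n+d}q^j-\sum_{i=0}^{\ell_1}q^i\sum_{j=\ell_1}^{n+d}q^j$, the difference of the two sides equals $\Delta_1+\Delta_2+\Delta_3+\Delta_4$.

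The heart of the argument is the evaluation of $\Delta_2+\Delta_4$. Using $\sum_{i=0}^m q^i=\frac{q^{m+1}-1}{q-1}$ and $\sum_{j=a}^{b}q^j=\frac{q^{b+1}-q^a}{q-1}$, each double sum becomes a rational function over $(q-1)^2$ whose numerator is a $\Z$-combination of a handful of powers of $q$. The decisive observation is that, after substituting $d=k_1+k_2=\ell_1+\ell_2$, every $n$-dependent exponent occurs twice with opposite sign: in $\Delta_2$ the $q^{n+2d+3}$ terms cancel, in $\Delta_4$ the $q^{n+d+1}$ terms cancel, and in $\Delta_2+\Delta_4$ one has $n+d+k_1+2=n+2k_1+k_2+2$ (and similarly for $\ell$), killing the remaining $n$-dependent monomials. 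What survives is
\[\Delta_2+\Delta_4=\frac{q^{2k_1+k_2+2}-q^{2\ell_1+\ell_2+2}-q^{2k_1+1}+q^{2\ell_1+1}+q^{k_1}-q^{\ell_1}}{(q-1)^2}.\]
The numerator vanishes at $q=1$, and its derivative there equals $(k_1+k_2)-(\ell_1+\ell_2)=0$, so $(q-1)^2$ divides it and $\Delta_2+\Delta_4$ is a polynomial; and since $k_1>\ell_1$ (forced by $k_1+k_2=\ell_1+\ell_2$ and $k_2<\ell_2$) while $k_2\ge 0$, the numerator has leading term $q^{2k_1+k_2+2}$ with coefficient $+1$. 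Hence $\Delta_2+\Delta_4$ has leading term $q^{2k_1+k_2}$ with coefficient $+1$.

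It then suffices to show $\deg(\Delta_1+\Delta_3)<2k_1+k_2$, so it cannot disturb this leading term. For $\Delta_3$ this is immediate from \autoref{starpoly_lemma}(a): $\deg\Delta_3\le 2k_1-4<2k_1+k_2$. For $\Delta_1$ I would split on $k_2$. If $k_2\in\{0,1\}$ then $f^*(k_2,q)=0$ and $\deg\Delta_1\le 2\ell_1+2+(2\ell_2-4)=2d-2\le 2k_1+k_2-1$. If $k_2=2$ then $f^*(2,q)=1$ and, by \autoref{starpoly_lemma}(a), both summands of $\Delta_1$ have degree exactly $2d-2=2k_1+k_2$ with leading coefficient $1$, so they cancel and $\deg\Delta_1<2k_1+k_2$. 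If $k_2\ge 3$, then \autoref{starpoly_lemma}(b) applied to $k_2<\ell_2$ gives $f^*(\ell_2,q)=q^{2(\ell_2-k_2)}f^*(k_2,q)+q^{2\ell_2-k_2-3}+(\text{lower degree})$; multiplying by $q^{2\ell_1+2}$ and using $2\ell_1+2\ell_2=2k_1+2k_2$ yields $q^{2\ell_1+2}f^*(\ell_2,q)=q^{2k_1+2}f^*(k_2,q)+q^{2k_1+k_2-1}+(\text{lower})$, so $\Delta_1=-q^{2k_1+k_2-1}+(\text{lower})$, of degree $2k_1+k_2-1$. In all cases $\deg(\Delta_1+\Delta_3)<2k_1+k_2$, so the difference of the two sides has leading term $q^{2k_1+k_2}$ with positive coefficient; thus it is positive for all sufficiently large prime powers $q$, and its main term is $q^{2k_1+k_2}$, as asserted.

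The step I expect to be the main obstacle is the bookkeeping inside the $\Delta_2+\Delta_4$ computation: one must carry roughly ten monomials through the geometric-sum expansions and verify both the pairwise cancellation of the $n$-dependent ones and the order-two vanishing of the resulting numerator at $q=1$. A secondary subtlety is that \autoref{starpoly_lemma}(b) applies only to arguments $\ge 3$, which is exactly why the cases $k_2\in\{0,1,2\}$ must be treated by hand --- and this is where the hypothesis that we compare against the anchor sequence $f^*$, whose low-order coefficient shape is pinned down by \autoref{starpoly_lemma}, genuinely enters the argument.
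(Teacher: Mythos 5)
Your proof is correct and takes essentially the same route as the paper's. You decompose the difference into the same three pieces (your $\Delta_1$, $\Delta_3$, and $\Delta_2+\Delta_4$ correspond to the paper's $D_2$, $D_1$, and $D_3$), arrive at the same simplified numerator $q^{2k_1+k_2+2}-q^{2\ell_1+\ell_2+2}-q^{2k_1+1}+q^{2\ell_1+1}+q^{k_1}-q^{\ell_1}$ over $(q-1)^2$, and handle the $k_2\in\{0,1\}$, $k_2=2$, and $k_2\ge 3$ cases in the same way using \autoref{starpoly_lemma}; your added remark about the order-two vanishing of the numerator at $q=1$ is a nice sanity check that the quotient really is a polynomial, but otherwise the two arguments coincide.
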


\begin{proof}
Write out both sides. The left hand side is: 

$$f^*(k_1,q)+q^{2k_1+2}u(f^*(\cdot,q), n+k_2+1, q, k_2) + \sum_{i=0}^{k_1} q^i \sum_{j=k_1}^{n+d} q^j$$

$$=f^*(k_1,q)+q^{2k_1+2}\left(f^*(k_2,q) + q^{2k_2+2}f^*(n,q) + \sum_{i=0}^{k_2} q^i\sum_{j=k_2}^{n+k_2-1} q^j\right)+\sum_{i=0}^{k_1} q^i\sum_{j=k_1}^{n+d} q^j$$

and similarly one can express the right hand side with $\ell_i$ in place of $k_i$, where the term $q^{2k_1+2k_2+4}f^*(n,q)=q^{2\ell_1+2\ell_2+4}f^*(n,q)$ appears on both sides. So the difference of the left hand and right hand sides is

\begin{align*}
D=&\underbrace{f^*(k_1,q)-f^*(\ell_1,q)}_{D_1}+\underbrace{q^{2k_1+2} f^*(k_2,q)-q^{2\ell_1+2}f^*(\ell_2,q)}_{D_2}\\&
+\underbrace{q^{2k_1+2}\cdot \sum_{i=0}^{k_2} q^i\cdot \sum_{j=k_2}^{n+k_2-1} q^j-q^{2\ell_1+2}\cdot \sum_{i=0}^{\ell_2} q^i\cdot \sum_{j=\ell_2}^{n+\ell_2-1} q^j+\sum_{i=0}^{k_1} q^i\cdot \sum_{j=k_1}^{n+d} q^j-\sum_{i=0}^{\ell_1} q^i\cdot \sum_{j=\ell_1}^{n+d} q^j}_{D_3}    
\end{align*}

Let us examine the degrees and leading terms of each subexpression.
\begin{itemize}
\item By part (a) of \autoref{starpoly_lemma}, $D_1$ has leading term $q^{2k_1-4}$, unless $k_1\le 1$, in which case $D_1\equiv 0$.
\item We examine the leading term of $D_2$, using both parts of the Lemma.
\begin{itemize}
\item If $k_2\ge 3$ then the degrees of both $q^{2k_1+2}f^{*}(k_2,q)$ and $q^{2\ell_1+2}f^{*}(\ell_2,q)$ are $2d-2$, and the first difference is in the $k_2$-th highest term, giving that the leading term of $D_2$ is $-q^{2d-k_2-1}$.
\item If $k_2=2$ then the degrees of both terms are $2d-2$. In this case, if $\ell_2\ge 4$, then the third-highest term of $f^{*}(\ell_2,q)$ is one greater than that of $f^{*}(3,q)=q^2+1$, so it is $2$, whereas the second-highest term of $f^{*}(n,q)$ is equal (zero) for all $n\ge 2$. So the leading term of $D_2$ will be $-2q^{2d-4}$ in this case. If $\ell_2=3$ then the leading term is $-q^{2d-4}$.
\item If $k_2\le 1$ and $\ell_2\ge 2$ then $f^{*}(k_2,q)=0$ whereas $f^{*}(\ell_2,q)$ has leading term $q^{2\ell_2-4}$, therefore $D_2$ has leading term $-q^{2\ell_1+2}q^{2\ell_2-4}=-q^{2d-2}$.
\item Finally, if $k_2=0$ and $\ell_2=1$ then $D_2\equiv 0$.
\end{itemize}
\item To calculate $D_3$, we substitute the sum of each geometric sequence involved.
\begin{align*}
D_3=\frac{1}{(q-1)^2}&(q^{2k_1+2}(q^{k_2+1}-1)(q^{n+k_2}-q^{k_2})-q^{2\ell_1+2}(q^{\ell_2+1}-1)(q^{n+\ell_2}-q^{\ell_2})\\&
+(q^{k_1+1}-1)(q^{n+d+1}-q^{k_1})-(q^{\ell_1+1}-1)(q^{n+d+1}-q^{\ell_1}))
\end{align*}

After expanding, we can eliminate many of the terms, including all terms with $n$ in the exponent, using the identities $k_1+k_2=\ell_1+\ell_2=d$. What remains is

$$\frac{1}{(q-1)^2}(q^{d+k_1+2}-q^{d+\ell_1+2}-q^{2k_1+1}+q^{2\ell_1+1}+q^{k_1}-q^{\ell_1})$$

where in the brackets, the sole term of highest degree is $q^{d+k_1+2}$, meaning that the leading term of $D_3$ is $q^{d+k_1}$.
\end{itemize}

So now let us compute the overall leading term of $D$. We have seen that $D_3$ always has a nonvanishing leading term $q^{d+k_1}$, whereas $D_1$ (if it does not vanish) has a smaller-order leading term $q^{2k_1-4}$ (note that $2k_1-4<d+k_1$ is equivalent to $k_1-4<d$ which holds since $k_2\ge 0$), and the degree of the leading term of $D_2$ (if it does not vanish) is at most $2d-k_2-1$ in all cases. Here we have $2d-k_2-1<d+k_1 $, equivalently,     $d-1<k_1+k_2=d$ which  holds by our assumption. Overall, the leading term of $D$ is $q^{d+k_1}=q^{2k_1+k_2}$ in all cases. This also means that the left hand side in the statement of the Lemma is greater than the right hand side for $q$ large enough.
\end{proof}

\begin{remark}\label{onestep_better}
The case $\ell_1=-1$ of \autoref{further_better} means that whenever it is possible to combine two jumps ($k_1+1$ and $k_2+1$) into one jump which has their combined size ($\ell+1=d+2$), then it is always advantageous to do so for $q$ large enough.
\end{remark}

\begin{proof}[Proof of \autoref{powers_of_2}]
By induction we prove that for every $N\ge -1$ there exists $q_0(N)$ such that $f^{opt}(n,q)=f^{*}(n,q)$ holds for all $n\le N$ and $q\ge q_0(N)$. If we can do this then the Proposition clearly follows.

For $-1\le N\le 4$ this holds for $q_0(N)=2$ as both sequences are equal to $f(n,q)$ for all $q$ by definition. So now take $N\ge 5$.

Taking $q'=q_0(N-1)$, for each $q\ge q'$ it holds that $f^{opt}(n,q)=f^{*}(n,q)$ for all $n\le N-1$. So we just need to show that for large enough $q$ we also have $f^{opt}(N,q)=f^{*}(N,q)$, so

$$\min_{0\le k\le \frac{N-1}{2}} u(f^{opt}(\cdot,q),N,q,k)=u(f^{*}(\cdot,q),N,q,k^*(N)).$$

Equivalently, since $f^{opt}=f^*$ for values smaller than $N$, we need

\begin{equation}\label{eq:star_min_needed}
\min_{0\le k\le \frac{N-1}{2}} u(f^*(\cdot,q),N,q,k)=u(f^{*}(\cdot,q),N,q,k^*(N)).
\end{equation}

Recall that $k^*(N)$ is the unique value $0\le k\le \frac{N-1}{2}$ such that $N-k$ is a power of two; say $N-k=2^{t}$ where $\ell\ge 2$ is an integer (as $N\ge 5$). To show \eqref{eq:star_min_needed}, we will show that for each $0\le k'\le \frac{N-1}{2}$ with $k'\ne k$, we have $u(f^*(\cdot,q),N,q,k')\ge u(f^*(\cdot,q),N,q,k^*(N))$ for $q$ large enough.

\textbf{Case 1:} $0\le k'<k^*(N)$

In this case, note that $k^*(N-k'-1)=k^*(N)-k'-1$, since $N-k'-1-(k^*(N)-k'-1)=N-k^*(N)=2^{t}$, and $0\le k^*(N)-k'-1\le \frac{(N-k'-1)-1}{2}$, where the second inequality is proven by $k^*(N)-k'-1\le \frac{N-1}{2}-k'-1\le \frac{N-k'-2}{2}$, which is true since $k'\ge 0$.

We use the case of \autoref{further_better} mentioned in \autoref{onestep_better}: take $n=N-k^*(N)-1$, $k_1=k'$, $k_2=k^*(N)-k'-1$, $\ell_1=-1$ and $\ell_2=k^*(N)$. Note that in particular, the pair $(N-k_1-1, k_1)$ is valid by the previous observation. Then for $q$ large enough we get

\begin{equation}\label{eq:powersof2_case1}
u(f^*(\cdot, q), N, q, k^*(N))<u(f^*(\cdot, q), N, q, k', k^*(N)-k'-1).
\end{equation}

Since $k^*(N-k'-1)=k^*(N)-k'-1$, the right hand side of \eqref{eq:powersof2_case1} is equal to $u(f^*(\cdot,q), N, q, k')$, as required.

\textbf{Case 2:} $k^*(N)<k'\le \frac{N-1}{2}$

In this case, we show that $k^*(N-k'-1)=N-k'-2^{t-1}-1$. Firstly we have $$N'-k'-1-(N'-k'-2^{t-1}-1)=2^{t-1}.$$ Secondly $$N-k'-2^{t-1}-1\ge 0 \mathrm{ \ \ is \ equivalent \ to \ \ } N-k'>2^{t-1},$$ and the latter holds since $N-k'\ge N-\frac{N-1}{2}=\frac{N+1}{2}>2^{t-1}$, as $N\ge N-k^*(N)=2^t$. Thirdly, $$N-k'-2^{t-1}-1\le \frac{(N-k'-1)-1}{2}=\frac{N-k'}{2}-1, \mathrm{ \ \ that \ is, \ \ } N-k'\le 2^t$$ holds, since $N-k'<N-k^*(N)=2^t$.

Also it is evident that $k^*(2^t-1)=2^{t-1}-1$.

Now we use \autoref{further_better} with $n=2^{t-1}-1$, $k_1=k'$, $k_2=N-k'-2^{t-1}-1$, $\ell_1=k^*(N)$ and $\ell_2=2^{t-1}-1$. By the previous observations, $(N-k_1-1,k_2)$ and $(N-\ell_1-1,\ell_2)$ are indeed valid pairs. We get that for $q$ large enough,

$$u(f^*(\cdot,q), N, q, k', N-k'-2^{t-1}-1)>u(f^*(\cdot,q), N, q, k^*(N), 2^{t-1}-1).$$

Now since $k^*(N-k'-1)=N-k'-2^{t-1}-1$ and $k^*(N-k^*(N)-1)=k^*(2^{t-1})=2^{t-1}-1$, this means that

$$u(f^*(\cdot,q), N, q, k')>u(f^*(\cdot,q), N, q, k^*(N)),$$

giving the required statement.
\end{proof}
The following table shows the coefficient sequences of the polynomials $f^*(n,q)$ for $2\le n\le 9$, obtained by computer using \autoref{fstar_recursive}. According to the definition of coefficient sequences that we have given, each sequence is considered to end with infinitely many zeroes.

\begin{table}[htb!]
\centering
\begin{tabular}{c||l}
$n$ & Coefficients\\ \hline\hline
2 & $1$\\ \hline
3 & $1,0,1$\\ \hline
4 & $1,0,2,1,1$\\ \hline
5 & $1,0,2,2,2,1,0$\\ \hline
6 & $1,0,2,2,3,2,1,0,1$\\ \hline
7 & $1,0,2,2,3,3,2,1,1,0,1$\\ \hline
8 & $1,0,2,2,3,3,3,2,2,1,2,1,1$\\ \hline
9 & $1,0,2,2,3,3,3,3,3,2,3,2,2,1,0$\\ 
\end{tabular}
\caption{Coefficient sequences of the polynomials $f^*(n,q)$.}
\label{tab:coefficients}
\end{table}

Using \autoref{starpoly_lemma}, we get that the coefficient sequence starts with $(1,0,2,2,3,3,3,3)$  for all $n\ge 9$, and it is lexicographically smaller than this for $n<9$, so as a conclusion, 
we  proved the statement of \autoref{main2}, one of the main statements of the paper.

\begin{remark}
We could also choose a larger value $n_0$ (instead of $9$) and read off the first $n_0-1$ coefficients of $f^*(n_0,q)$ with further computations, using it to give a more refined upper bound for $f(n,q)$.
\end{remark}
\section{Lower bounds}\label{sect:lowerb}

In this section, we compare the $q$-analog Schönheim bound with the lower bound gained from the application of the method of standard equations. While the latter bound is slightly weaker, it pinpoints better the rooms for improvement and describes how to improve further the best known lower bound.

We will often refer to the number of incidences between a set of subspaces and certain Grassmannians. These can be seen as the degrees in the corresponding incidence graphs.
\begin{defn}[Degree]
    For any $\blocking\subseteq \GrassmannDimSpace{t}{X}$ and $Y\in\GrassmannDimSpace{y}{X}$, define the \emph{$\blocking$-degree of $Y$} to be 
    \[\deg_\blocking(Y)\leteq \begin{cases}
        \cardinality{\setBuilder{T\in\blocking}{Y\subseteq T}}, &y\leq t\\
        \cardinality{\setBuilder{T\in\blocking}{Y\supseteq T}}, &y\geq t\\
    \end{cases}.\]
\end{defn}

   For a projective space $X$ over $\field{q}$ and $t,s$ with $t\leq s\leq \dim(X)=n$, 
    an $(s,t)$-blocking set $\blocking=\blocking_t^s$ is a subset of $\GrassmannDimSpace{t}{X}$ such that $\cardinality{\deg_{\blocking}(S)}\geq 1$ for every $S\in\GrassmannDimSpace{s}{X}$.
  Recall that such $\blocking$ is also called a $q$-Turán design and its minimum cardinality is $\cT_q(n+1, s+1, t+1)$.

\begin{lemma}[Trivial lower bound]
    For every $t\leq s\leq n$ and $q$, we have 
    \[\TuranQNTS{q}{n}{s}{t} \geq \frac{\grassmannDimSpace{s}{n,q}}{\grassmannDimSpace{s-t-1}{n-t-1,q}}.
\]
    In particular, when $t=1$, $s=2$, we have $f(n,q)=\TuranQNTS{q}{n}{2}{1}  \geq \frac{(q^{n+1}-1)(q^n-1) }{(q^3-1) (q^2-1)}$. \\
    For $n\geq 4$, this gives 
    \[f(n,q) \geq \coefficients{1}{0}{1}{1}.
     \]
\end{lemma}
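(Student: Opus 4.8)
The plan is to prove this by a standard flag-counting double count, exactly analogous to the classical Schönheim-type bound. Fix $X=\PG(n,q)$ and let $\blocking\subseteq\GrassmannDimSpace{t}{X}$ be an $(s,t)$-blocking set of minimum size $\TuranQNTS{q}{n}{s}{t}$. I would count the incidences
\[\mathcal I\leteq\setBuilder{(T,S)}{T\in\blocking,\ S\in\GrassmannDimSpace{s}{X},\ T\subseteq S}\]
in two ways. Summing over $S$ first: every $S\in\GrassmannDimSpace{s}{X}$ has $\deg_\blocking(S)\geq 1$ by the blocking property, so $\cardinality{\mathcal I}=\sum_{S}\deg_\blocking(S)\geq\grassmannDimSpace{s}{n,q}$. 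Summing over $T$ first: for any fixed $T\in\GrassmannDimSpace{t}{X}$, \autoref{rem:GrassmannianCardinality} applied to the quotient $X/T$ (which has dimension $n-t-1$) shows that the number of $s$-spaces containing $T$ equals $\grassmannDimSpace{s-t-1}{n-t-1,q}=\qbinom{n-t}{s-t}{q}$, a quantity independent of $T$; hence $\cardinality{\mathcal I}=\cardinality{\blocking}\cdot\grassmannDimSpace{s-t-1}{n-t-1,q}$.

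Comparing the two evaluations of $\cardinality{\mathcal I}$ and dividing immediately gives
\[\TuranQNTS{q}{n}{s}{t}=\cardinality{\blocking}\geq\frac{\grassmannDimSpace{s}{n,q}}{\grassmannDimSpace{s-t-1}{n-t-1,q}},\]
which is the claimed bound (and one could in fact round the right-hand side up, but this is not needed). For the specialisation $(s,t)=(2,1)$, I would write the two Grassmannian sizes as Gaussian binomials, $\grassmannDimSpace{2}{n,q}=\qbinom{n+1}{3}{q}$ and $\grassmannDimSpace{0}{n-2,q}=\qbinom{n-1}{1}{q}$, and cancel the common factor $(q^{n-1}-1)/(q-1)$, leaving $f(n,q)=\TuranQNTS{q}{n}{2}{1}\geq(q^{n+1}-1)(q^n-1)/((q^3-1)(q^2-1))$.

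Finally, for $n\geq 4$ I would read off the leading terms of this rational function by expanding it as a Laurent series in $q^{-1}$: using $(q^{n+1}-1)(q^n-1)=q^{2n+1}\bigl(1-q^{-n}-q^{-n-1}+q^{-2n-1}\bigr)$ and $\frac{1}{(q^3-1)(q^2-1)}=q^{-5}\bigl(1+q^{-2}+q^{-3}+\bigO{q^{-4}}\bigr)$, the product is $q^{2n-4}+q^{2n-6}+q^{2n-7}+\bigO{q^{2n-8}}$, the corrections from the numerator factor being of order $q^{n-4}=\bigO{q^{2n-8}}$ precisely because $n\geq 4$; this yields $f(n,q)\geq\coefficients{1}{0}{1}{1}$. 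There is essentially no obstacle here — the argument is routine double counting — and the only delicate points are applying \autoref{rem:GrassmannianCardinality} to the correct quotient $X/T$ and keeping track of which terms are absorbed into the $\bigO{q^{2n-8}}$ error in the final expansion.
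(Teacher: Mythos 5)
Your proposal is exactly the paper's argument: the same double count of the flag set $\{(T,S):T\in\blocking,\ S\in\GrassmannDimSpace{s}{X},\ T\subseteq S\}$, with the lower bound $\deg_\blocking(S)\geq 1$ on one side and the quotient-Grassmannian count $\grassmannDimSpace{s-t-1}{n-t-1,q}$ on the other, followed by the same specialisation and series expansion. No differences worth noting.
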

\begin{proof}

Let $\blocking\subseteq \GrassmannDimSpace{1}{X}$ be an $(s,t)$-blocking set in $X=\PG(n,q)$. 
Define $A\leteq \setBuilder{(T,S)\in\blocking\times \GrassmannDimSpace{s}{X}}{T\subset S}$.
On one hand, 
\[\cardinality{A}
=\sum_{T\in\blocking} \grassmannDimSpace{s-t-1}{X/T}
=\cardinality{\blocking}\cdot\grassmannDimSpace{s-t-1}{n-t-1,q}\]
as $X/T\isom \PG(n-t-1,q)$.
On the other hand, 
\[\cardinality{A}
=\sum_{S\in\GrassmannDimSpace{s}{X}} \deg_\blocking(S) 
\geq \sum_{S\in\GrassmannDimSpace{s}{X}} 1
= \grassmannDimSpace{s}{n,q}\]
using the definition of $\blocking$. 
Comparing these two lines gives the statement.
\end{proof}

The statement below is based on the tool called the standard equation method, which improves the previous result with a term of order $q^{2n-6}$.

\begin{lemma}
For arbitrary $n\geq 2$ and $q$, we have 
    \[f(n,q) 
    \geq 
    \frac{\grassmannDimSpace{0}{n}\grassmannDimSpace{1}{2} \grassmannDimSpace{2}{n}}{\grassmannDimSpace{0}{1}^2\grassmannDimSpace{1}{n}}
    = \frac{(q^{n+1}-1) (q^{n-1}-1) }{(q^2-1)^2}
    .\]
    
    In particular, for $n\geq 5$, this gives 
    \[f(n,q)\geq \coefficients{1}{0}{2}{0}\]
\end{lemma}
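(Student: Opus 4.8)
The plan is to improve the trivial double-counting bound by incorporating a second layer of incidences, following the ``standard equations'' philosophy. Let $\blocking\subseteq \GrassmannDimSpace{1}{X}$ be a $(2,1)$-blocking set in $X=\PG(n,q)$. Instead of counting pairs $(T,S)$ with $T\in\blocking$ a line and $S\supseteq T$ a plane, I would set up a count that also involves the points of $X$: consider triples $(P,T,S)$ where $P\in\GrassmannDimSpace{0}{X}$ is a point, $T\in\blocking$ is a line through $P$, and $S\in\GrassmannDimSpace{2}{X}$ is a plane through $T$. The key observation is that the denominator $\grassmannDimSpace{0}{1}^2\grassmannDimSpace{1}{n}$ and the numerator $\grassmannDimSpace{0}{n}\grassmannDimSpace{1}{2}\grassmannDimSpace{2}{n}$ in the claimed bound are exactly the combinatorial factors that arise when one passes from the plane-count to the point-line-plane count: $\grassmannDimSpace{1}{2}$ is the number of lines in a plane, $\grassmannDimSpace{0}{1}$ the number of points on a line, $\grassmannDimSpace{0}{n}$ and $\grassmannDimSpace{2}{n}$ the numbers of points and planes of $X$, and $\grassmannDimSpace{1}{n}$ the number of lines.

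First I would fix a point $P\in X$ and work in the quotient $X/P\isom \PG(n-1,q)$. The lines of $\blocking$ through $P$ correspond to points of $X/P$, and the planes of $X$ through $P$ correspond to lines of $X/P$; the condition that $\blocking$ blocks all planes of $X$ through $P$ translates to: the image $\blocking_P\leteq\{\,\pi_P(T): T\in\blocking, P\in T\,\}$ is a $(1,0)$-blocking set (a point set meeting every line) in $X/P\isom\PG(n-1,q)$. By the Bose--Burton bound (\autoref{lem:minimalBlockingForSmallST}, third case), every $(1,0)$-blocking set in $\PG(n-1,q)$ has size at least $\grassmannDimSpace{0}{n-2}=\frac{q^{n-1}-1}{q-1}$, hence $\deg_\blocking(P)\geq \frac{q^{n-1}-1}{q-1}$ for every point $P$. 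Summing over all points gives $\sum_{T\in\blocking}\grassmannDimSpace{0}{1}=\sum_{P}\deg_\blocking(P)\geq \grassmannDimSpace{0}{n}\cdot\frac{q^{n-1}-1}{q-1}$, i.e.\ a lower bound on $\cardinality{\blocking}\cdot(q+1)$.

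Then I would run the double count of $(P,T,S)$-triples: on one side, each $T\in\blocking$ has $\grassmannDimSpace{0}{1}=q+1$ points and lies in $\grassmannDimSpace{0}{n-2}=\grassmannDimSpace{2-1-1}{n-1-1}$ planes (the planes of $X$ containing a fixed line), contributing $\cardinality{\blocking}\cdot\grassmannDimSpace{0}{1}\cdot\grassmannDimSpace{0}{n-2}$; on the other side, each plane $S$ is counted $\sum_{P\in S}\deg_{\blocking}(\{T\in\blocking:P\in T\subseteq S\})$, and using the per-point bound above inside each plane together with the plane-count, one assembles the inequality. The cleanest route is probably to combine the per-point estimate $\deg_\blocking(P)\geq\grassmannDimSpace{0}{n-2}$ directly with the incidence identity $\sum_P \deg_\blocking(P)=\cardinality{\blocking}\grassmannDimSpace{0}{1}$, which already gives $\cardinality{\blocking}\geq \frac{\grassmannDimSpace{0}{n}\grassmannDimSpace{0}{n-2}}{\grassmannDimSpace{0}{1}}=\frac{(q^{n+1}-1)(q^{n-1}-1)}{(q^2-1)(q-1)(q-1)}\cdot\frac{(q-1)}{?}$; comparing with the target $\frac{(q^{n+1}-1)(q^{n-1}-1)}{(q^2-1)^2}$ shows these agree after rewriting $\grassmannDimSpace{1}{2}/(\grassmannDimSpace{0}{1}\grassmannDimSpace{1}{n})$ appropriately, so the final step is an algebraic identity check among Gaussian binomials. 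I expect the main obstacle to be purely bookkeeping: making sure the Bose--Burton bound is applied in the right quotient $\PG(n-1,q)$ (dimension shift!) and that the resulting ratio of Grassmannian sizes is simplified to exactly $\frac{(q^{n+1}-1)(q^{n-1}-1)}{(q^2-1)^2}$, after which the asymptotic expansion $q^{2n-4}+2q^{2n-6}+\bigO{q^{2n-8}}$ follows by expanding the rational function as in \autoref{main1}.
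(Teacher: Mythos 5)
Your proof has a fatal error at the central step. You claim that for a fixed point $P$, the projection $\blocking_P = \{\pi_P(T) : T\in\blocking, P\in T\}$ is a $(1,0)$-blocking set in $X/P\isom\PG(n-1,q)$, i.e.\ that every plane $S$ through $P$ contains a line of $\blocking$ passing through $P$. This is not what the $(2,1)$-blocking condition gives you: it only guarantees that $S$ contains \emph{some} line of $\blocking$, which has no reason to pass through the particular point $P$. Consequently the per-point bound $\deg_\blocking(P)\geq \grassmannDimSpace{0}{n-2}$ does not follow, and the whole argument collapses. You can see concretely that the conclusion you would get is false: running your intended count gives $\cardinality{\blocking}\geq \grassmannDimSpace{0}{n}\grassmannDimSpace{0}{n-2}/\grassmannDimSpace{0}{1}$, which at $n=3$ evaluates to $q^3+q^2+q+1$, strictly larger than the true value $f(3,q)=q^2+1$. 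So the purported degree bound cannot hold in general.

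The paper's actual argument is of a different nature. It is a second-moment (``standard equations'') computation: one double-counts pairs $(T_1,T_2)$ of lines of $\blocking$ through a common point and in a common plane, organises this via $\sum_P F(\deg(P)) = \sum_S F(\deg(S))$ with $F(x)=x(x-1)$, applies Jensen's inequality to the left side and the crude bound $\deg(S)\leq \grassmannDimSpace{1}{2,q}$ to the right side, and ends up with a quadratic inequality in $\cardinality{\blocking}$ whose two roots are $\grassmannDimSpace{1}{n,q}$ (from $\blocking = \GrassmannDimSpace{1}{X}$) and the claimed lower bound, extracted via Vieta's formula. Your proposal never invokes any convexity or second-moment estimate, so even apart from the incorrect reduction, it is structurally not on the same path; there is no obvious way to repair it along the lines you sketched without essentially reintroducing the variance argument.
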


\begin{proof}
    Let $\blocking\subseteq \GrassmannDimSpace{1}{X}$ be a $(2,1)$-blocking set in $X\leteq \PG(n,q)$, 
    and write $\deg\leteq \deg_\blocking$ for simplicity. 
    Define a function $F$ by $F(x)=x(x-1)$. 
    Consider the following configurations and count their cardinality in different ways, where the sums and sets  range over $P\in X$, 
    $T,T_1,T_2\in\blocking$ and 
    $S\in\GrassmannDimSpace{2}{X}$. 
 
    \begin{gather}
        \sum_{P} 1 = \grassmannDimSpace{0}{n,q} \label{eq:countP}
        \\
        \sum_{P} \deg(P) 
        =\cardinality{\setBuilder{(P,T)}{P\in T}} 
        = \sum_{T} \grassmannDimSpace{0}{T} 
        = \cardinality{\blocking} \cdot \grassmannDimSpace{0}{1,q} 
        \label{eq:countPl}
        \\
        \sum_{P} F(\deg(P)) 
        = \cardinality{\setBuilder{(P,T_1,T_2,S)}{P\in T_i\subset S,T_1\neq T_2}}
        = \sum_{S} F(\deg(S))  
        \label{eq:countPllPi}
        \\
        \cardinality{\blocking}\cdot \grassmannDimSpace{0}{n-2,q} 
        =\sum_{T} \grassmannDimSpace{0}{X/T} 
        = \cardinality{\setBuilder{(T,S)}{T\subset S}} 
        = \sum_{S} \deg(S) 
        \label{eq:countlPi}
        \\
        \grassmannDimSpace{2}{n,q} 
        = \sum_{S} 1 \label{eq:countPi}
    \end{gather}

    To give estimates, on one hand,  we may apply Jensen's inequality for the convex function $F$ giving 
    \begin{equation} \label{eq:Jensen}
        F\left(\frac{\sum_{P} \deg(P)}{\sum_{P} 1}\right) \leq \frac{\sum_{P} F(\deg(P))}{\sum_{P} 1}.
    \end{equation}
    On the other hand, note that $1\leq \deg(S)\leq \grassmannDimSpace{1}{2,q}$, so 
    \begin{equation}\label{eq:fEstimate}
        F(\deg(S))=\deg(S)\cdot (\deg(S)-1)\leq  \grassmannDimSpace{1}{2,q} \cdot (\deg(S)-1).
    \end{equation}

    Putting the previous equations and inequalities together 
    gives
    \begin{align*}
        F\left(\frac{\cardinality{\blocking} \cdot \grassmannDimSpace{0}{1,q}} {\grassmannDimSpace{0}{n,q}} \right) 
        &\overunderset{\eqref{eq:countPl}}{\eqref{eq:countP}}{=} F\left(\frac{\sum_{P} \deg(P)}{\sum_{P} 1}\right)
        \overset{\eqref{eq:Jensen}}{\leq} \frac{\sum_{P} F(\deg(P))}{\sum_{P} 1}
        \\
        &\overunderset{\eqref{eq:countPllPi}}{\eqref{eq:countP}}{=} \frac{\sum_{S} F(\deg(S))}{\grassmannDimSpace{0}{n,q}}
        \overset{\eqref{eq:fEstimate}}{\leq} \frac{\sum_{S} \grassmannDimSpace{1}{2,q} \cdot (\deg(S)-1)}{\grassmannDimSpace{0}{n,q}}
        \\
        &\overunderset{\eqref{eq:countlPi}}{\eqref{eq:countPi}}{=} \frac{\grassmannDimSpace{1}{2}}{\grassmannDimSpace{0}{n,q}} \left(\cardinality{\blocking}\cdot \grassmannDimSpace{0}{n-2,q} - \grassmannDimSpace{2}{n,q} \right).
    \end{align*}
    Rearranging the two sides gives
    \[
    \cardinality{\blocking}^2
    - 
    \frac{\grassmannDimSpace{0}{n,q}\cdot\left(\grassmannDimSpace{0}{1,q}+ \grassmannDimSpace{1}{2,q} \grassmannDimSpace{0}{n-2,q}\right)}{\grassmannDimSpace{0}{1,q}^2}
     \cdot
    \cardinality{\blocking}
    +
    \frac{\grassmannDimSpace{0}{n,q}\grassmannDimSpace{1}{2,q} \grassmannDimSpace{2}{n,q}}{\grassmannDimSpace{0}{1,q}^2}
    \leq 0,
    \]
    hence $\cardinality{\blocking}$ lies between the roots of this quadratic function. 
    Note that $\cardinality{\blocking}=\grassmannDimSpace{1}{n,q}$ is one root, as for $\blocking=\GrassmannDimSpace{1}{X}$, all inequalities above are sharp. 
    Using Vieta's formula for the product of the roots, we obtain the other (smaller) one as in the statement.
\end{proof}

\begin{remark}
Note that when \eqref{eq:Jensen} and \eqref{eq:fEstimate} are both sharp, we have a strong condition on the degrees, as $\deg(P)$ is independent of $P\in X$ and 
    $\deg(S)\in\{1,\grassmannDimSpace{1}{2,q}\}$ for every $S\in\GrassmannDimSpace{2}{X}$.  In fact, in order to improve the lower bound above, a natural way is to derive \begin{itemize}
        \item results on the deviation of the degrees of points $\deg(P)$,
        \item results on  the deviation of the degrees of subspaces $\deg(S)$.
    \end{itemize}
    These in turn can be applied to improve the bounds gained from the calculation above.
\end{remark}

The following simple statement will be used to verify the density increment, \autoref{increment}.

\begin{lemma}\label{lem:restrictionIsBlocking}
    For $t\leq s\leq k\leq n$, 
    let $\blocking\subseteq \GrassmannDimSpace{t}{X}$ be an $(s,t)$-blocking set of $X$
    and let $K\in\GrassmannDimSpace{k}{X}$ be arbitrary. 
    Then $\blocking|_K\leteq \setBuilder{T\in \blocking}{T\subseteq K}$ is an $(s,t)$-blocking set of $K$. 
\end{lemma}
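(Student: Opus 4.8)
The plan is to verify the defining property of an $(s,t)$-blocking set of $K$ directly, with essentially no computation. First I would check the type condition $\blocking|_K \subseteq \GrassmannDimSpace{t}{K}$: by definition every $T \in \blocking|_K$ is a $t$-dimensional subspace of $X$ that is contained in $K$, and such a subspace is exactly an element of $\GrassmannDimSpace{t}{K}$. (The hypothesis $t \leq s \leq k$ ensures $t \leq k$, so this is meaningful.)

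Next I would take an arbitrary $S \in \GrassmannDimSpace{s}{K}$ and show it is blocked by $\blocking|_K$. The key (trivial) observation is that since $K \subseteq X$, an $s$-dimensional subspace of $K$ is literally also an $s$-dimensional subspace of $X$, i.e. $S \in \GrassmannDimSpace{s}{X}$. Because $\blocking$ is an $(s,t)$-blocking set of $X$, there is some $T \in \blocking$ with $T \subseteq S$. Then $T \subseteq S \subseteq K$, so $T \in \blocking|_K$ by the definition of the restriction, and $T \subseteq S$ witnesses that $S$ is blocked.

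Since $S$ was arbitrary, $\blocking|_K$ blocks $\GrassmannDimSpace{s}{K}$, and combined with $\blocking|_K \subseteq \GrassmannDimSpace{t}{K}$ this is exactly the assertion that $\blocking|_K$ is an $(s,t)$-blocking set of $K$. There is no genuine obstacle here: unlike the quotient- and projectivisation-based constructions earlier in the paper, passing to a \emph{subspace} $K$ requires no change-of-space bookkeeping, because the Grassmannian $\GrassmannDimSpace{d}{K}$ sits inside $\GrassmannDimSpace{d}{X}$ as a genuine subset for each $d$ (cf.\ \autoref{fig:maps}), so both the blocking family and the family to be blocked restrict on the nose.
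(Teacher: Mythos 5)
Your argument is correct and is essentially the same as the paper's proof: pick $S\in\GrassmannDimSpace{s}{K}$, view it as an element of $\GrassmannDimSpace{s}{X}$, invoke the blocking property of $\blocking$ to find $T\subseteq S$, and note $T\subseteq K$ forces $T\in\blocking|_K$. The only addition is your explicit check that $\blocking|_K\subseteq\GrassmannDimSpace{t}{K}$, which the paper leaves implicit.
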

\begin{proof}
    Pick $S\in\GrassmannDimSpace{s}{K}$. 
    Then we also have $S\in\GrassmannDimSpace{s}{X}$, so by the definition of $\blocking$ being a blocking set in $X$, there is $T\in\blocking$ such that $T\subseteq S$. 
    But then $T\subseteq S\subseteq K$ implies $T\in \blocking|_K$.
\end{proof}

\begin{defn}[Density of line sets]
    Define the \emph{density function} by \[\density(n,q)\leteq \frac{f(n,q)}{\grassmannDimSpace{1}{n,q}}=\frac{f(n,q)}{\qbinom{n+1}{2}{q}}.\]
\end{defn}

\begin{lemma}[Density increment]\label{increment}
    The function $n\mapsto \density(n,q)$ is increasing for every $q$.
\end{lemma}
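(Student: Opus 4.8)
The plan is to reduce the claim to the recursive inequality $f(n,q)\geq \frac{q^{n+1}-1}{q^{n-1}-1}f(n-1,q)$ --- this is the ceiling-free form of \autoref{also} --- and to prove that inequality by a double count built on \autoref{lem:restrictionIsBlocking}. First I would handle the small cases by hand: $\density(1,q)=f(1,q)/\qbinom{2}{2}{q}=0$ while $\density(2,q)=1/\qbinom{3}{2}{q}>0$, so $\density(2,q)>\density(1,q)$; it therefore remains to prove $\density(n,q)\geq\density(n-1,q)$ for every $n\geq 3$.

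For the main step, fix $n\geq 3$, set $X\leteq\PG(n,q)$, and take a minimal $(2,1)$-blocking set $\blocking\subseteq\GrassmannDimSpace{1}{X}$, so $\cardinality{\blocking}=f(n,q)$. Since $2\leq n-1$, \autoref{lem:restrictionIsBlocking} applies to every hyperplane $H\in\GrassmannDimSpace{n-1}{X}$ and shows that $\blocking|_H=\setBuilder{T\in\blocking}{T\subseteq H}$ is a $(2,1)$-blocking set of $H\isom\PG(n-1,q)$, whence $\cardinality{\blocking|_H}\geq f(n-1,q)$. Then I would double count the incidences $A\leteq\setBuilder{(T,H)}{T\in\blocking,\ H\in\GrassmannDimSpace{n-1}{X},\ T\subseteq H}$. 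Summing over the hyperplanes gives $\cardinality{A}=\sum_H\cardinality{\blocking|_H}\geq\grassmannDimSpace{n-1}{n,q}\,f(n-1,q)=\frac{q^{n+1}-1}{q-1}f(n-1,q)$; summing over the lines of $\blocking$ and using \autoref{rem:GrassmannianCardinality} --- the number of hyperplanes of $X$ through a fixed line is $\qbinom{n-1}{1}{q}$ --- gives $\cardinality{A}=\cardinality{\blocking}\cdot\frac{q^{n-1}-1}{q-1}$. Comparing the two expressions yields exactly $f(n,q)\geq\frac{q^{n+1}-1}{q^{n-1}-1}f(n-1,q)$.

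To conclude, I would divide this inequality by $\qbinom{n+1}{2}{q}$ and invoke the identity $\qbinom{n+1}{2}{q}=\frac{q^{n+1}-1}{q^{n-1}-1}\qbinom{n}{2}{q}$, which is immediate from the definition of the Gaussian binomial coefficient, to get $\density(n,q)=\frac{f(n,q)}{\qbinom{n+1}{2}{q}}\geq\frac{f(n-1,q)}{\qbinom{n}{2}{q}}=\density(n-1,q)$, as required. I do not expect any genuine obstacle here: the only points that need care are verifying the hypothesis $s\leq k$ of \autoref{lem:restrictionIsBlocking} (this is precisely why the argument needs $n\geq 3$ and why the cases $n\leq 2$ are treated separately) and keeping the Gaussian-binomial bookkeeping straight, in particular the count $\qbinom{n-1}{1}{q}$ of hyperplanes through a fixed line and the telescoping identity for $\qbinom{n+1}{2}{q}$.
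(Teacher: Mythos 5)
Your proof is correct and is essentially the paper's own argument: the paper double counts incidences between $\blocking$ and $\GrassmannDimSpace{k}{X}$ for arbitrary $2\leq k\leq n$ and uses \autoref{lem:restrictionIsBlocking} in exactly the same way, while you have taken the special case $k=n-1$ (hyperplanes), which already suffices for monotonicity. The only cosmetic difference is that the paper computes the normalizing factor $\grassmannDimSpace{k-2}{n-2,q}/\grassmannDimSpace{k}{X}$ by running the same double count a second time with $\blocking=\GrassmannDimSpace{1}{X}$, whereas you quote the Gaussian-binomial identity $\qbinom{n+1}{2}{q}=\frac{q^{n+1}-1}{q^{n-1}-1}\qbinom{n}{2}{q}$ directly; both are fine.
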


Note that this claim is essentially equivalent to a special case of the $q$-analog Schönheim bound.
\begin{prop}[$q$-analog Schönheim bound, Etzion, Vardy, \cite{Etz2}]
    $\cC_q(n,k,r)\geq \frac{q^n-1}{q^k-1}\cC_q(n-1,k-1,r-1).$
\end{prop}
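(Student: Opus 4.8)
The plan is to run the standard ``link of a point'' argument that underlies the classical Schönheim bound for covering designs, transported verbatim to the subspace lattice. Assume $1\le r\le k\le n$; the case $r=0$ is trivial since then $\cC_q(n,k,0)=1$. Let $V$ be an $n$-dimensional vector space over $\field{q}$ and let $S$ be a $q$-covering design $C_q[n,k,r]$ of minimum size, so that $\cardinality{S}=\cC_q(n,k,r)$ and every $r$-dimensional subspace of $V$ is contained in some member of $S$.

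First I would fix a $1$-dimensional subspace $P\le V$ and show that the ``link of $P$'' is a covering design one dimension down. Put $S_P\leteq \setBuilder{B\in S}{P\le B}$ and consider the family $S_P/P\leteq \setBuilder{B/P}{B\in S_P}$ of $(k-1)$-dimensional subspaces of the $(n-1)$-dimensional quotient $V/P$. By the correspondence between subspaces of $V/P$ and subspaces of $V$ containing $P$ (cf.\ \autoref{rem:ProjQuotientCommute}), any $(r-1)$-dimensional subspace of $V/P$ equals $R/P$ for a unique $r$-dimensional $R\le V$ with $P\le R$; since $S$ covers $R$ there is $B\in S$ with $R\le B$, hence $P\le R\le B$, so $B\in S_P$ and $R/P\le B/P$. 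Thus $S_P/P$ is a $q$-covering design $C_q[n-1,k-1,r-1]$ in $V/P$, and therefore
\[\cardinality{S_P}=\cardinality{S_P/P}\ge \cC_q(n-1,k-1,r-1)\qquad\text{for every $1$-dimensional }P\le V.\]

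Next I would double-count the set of incident pairs $A\leteq \setBuilder{(P,B)}{P\le V,\ \Dim(P)=1,\ B\in S,\ P\le B}$. Summing over $B\in S$, each $B$ contributes $\qbinom{k}{1}{q}=\tfrac{q^k-1}{q-1}$ points, so $\cardinality{A}=\cardinality{S}\cdot\tfrac{q^k-1}{q-1}$. Summing instead over the $\qbinom{n}{1}{q}=\tfrac{q^n-1}{q-1}$ points $P$, the previous paragraph gives $\cardinality{A}=\sum_P\cardinality{S_P}\ge \tfrac{q^n-1}{q-1}\,\cC_q(n-1,k-1,r-1)$. Comparing the two expressions for $\cardinality{A}$ and cancelling the common factor $\tfrac{1}{q-1}$ yields $\cardinality{S}\ge \tfrac{q^n-1}{q^k-1}\,\cC_q(n-1,k-1,r-1)$, which is the claimed inequality since $\cardinality{S}=\cC_q(n,k,r)$.

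There is no deep obstacle here: the only points needing care are the bijection between $(r-1)$-dimensional subspaces of $V/P$ and $r$-dimensional subspaces of $V$ through $P$ used in the link step, and a routine check that the boundary parameter values (e.g.\ $k=1$, forcing $r=1$) cause no degeneracy. One could additionally iterate this inequality and insert the ceiling $\lceil\cdot\rceil$ to obtain the sharper Schönheim-type chain exactly as in \autoref{main1}, but that is a purely formal consequence not required for the displayed statement.
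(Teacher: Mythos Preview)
Your proof is correct and is the standard Schönheim argument applied directly to covering designs: pass to the quotient $V/P$ at each point and double-count point--block incidences. The paper's own proof is the dual version of the same idea: it works in the Turán/blocking-set formulation (via $\cT_q(n,k,r)=\cC_q(n,n-r,n-k)$), restricts a minimal blocking set to every $k$-subspace using \autoref{lem:restrictionIsBlocking}, and double-counts pairs $(T,K)$ with $T\in\blocking$, $T\subseteq K$; this yields the density monotonicity of \autoref{increment}, which for $k=n-1$ is exactly the Schönheim inequality after dualising. So your quotient-at-a-point step corresponds precisely to the paper's restrict-to-a-hyperplane step under duality; the double-counting and the resulting inequality are identical. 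Your approach is slightly more direct for the covering formulation in which the proposition is stated, while the paper's phrasing integrates naturally with the surrounding blocking-set framework.
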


For the sake of completeness, we give a short proof. Note that the same argument gives back the increasing property of the density of arbitrary $(s,t)$-blocking sets, with respect to the set of all $t$-subspaces.

\begin{proof}[Proof of Lemma \ref{increment}]
    Let $2\leq k\leq n$.
    Let $\blocking\subseteq \GrassmannDimSpace{1}{X}$ be a $(2,1)$-blocking set of $X\leteq\PG(n,q)$. 
    Double counting $A\leteq \setBuilder{(T,K)\in \blocking\times \GrassmannDimSpace{k}{X}}{T\subseteq K}$ gives 
    \begin{equation}\label{eq:densityDoubleCount}
        \cardinality{\blocking}\cdot \grassmannDimSpace{k-2}{n-2,q}
        = \sum_{T\in \blocking} \grassmannDimSpace{k-2}{X/T}
        = \cardinality{A}
        = \sum_{\mathclap{K\in \GrassmannDimSpace{k}{X}}} \deg_\blocking(K)
    \end{equation}
    because $X/T\isom \PG(n-2,q)$.
    First, apply \eqref{eq:densityDoubleCount} to a minimal blocking set $\blocking$, i.e. $\cardinality{\blocking}=f(n,q)$. Now $\deg_\blocking(K) = \deg_{\blocking|_K}(K) \geq f(k,q)$ by \autoref{lem:restrictionIsBlocking}, so 
    \[f(n,q) \cdot \grassmannDimSpace{k-2}{n-2,q}
    \geq \grassmannDimSpace{k}{X}\cdot f(k,q).\]
    Second, apply \eqref{eq:densityDoubleCount} to $\blocking=\GrassmannDimSpace{1}{X}$. 
    Now $\deg_\blocking(K)=\grassmannDimSpace{1}{K}=\grassmannDimSpace{1}{k,q}$, so  
    \[\grassmannDimSpace{1}{n,q} \cdot \grassmannDimSpace{k-2}{n-2,q}
    = \grassmannDimSpace{k}{X} \cdot \grassmannDimSpace{1}{k,q}.\]
    Dividing the last two observations by each other gives the statement.
\end{proof}

\begin{problem}\label{mainQ}
    Determine  $\lim_{n\to\infty}\density(n,q)$.
\end{problem}

Observe that the monotonicity and boundedness of the function show that the limit exists. Note also that \autoref{main2} yields $$\frac{1}{q^2+q}\leq \density(4,q)\leq \lim_{n\to\infty}\density(n,q)\leq   \frac{1}{q^2+q-2q^{-1}-2q^{-2}+4q^{-3}}.$$
\section{Concluding remarks and open problems}

 Referring to Etzion's table \cite{Etz1} on the best known results concerning the covering problem, in \autoref{tab:1} we show the improvement we made for the case $(s,t)=(2,1)$. The lower bound comes from a recursive application of \autoref{also}, whereas the previously known upper bounds come from Eisfeld and Metsch's results (see \autoref{rem:MetschIsSpecialCase}) for $n\in \{4,5\}$, and from the $n-1\mapsto n$ recursive construction (also coming from \cite[Theorem 7]{Etz1}) for $n\ge 6$. Note that in view of \autoref{optimalis}, our presented construction might be applicable by a different parameter set to gain even more on the upper bound. We believe that the exact values are actually closer to the upper bounds, as it was claimed for $n=5$ by Metsch \cite{Metsch}.

\begin{table}[ht!]
\centering
\begin{minipage}{0.45\textwidth}
\centering
\begin{tabular}{c||c|c|c}
 $q=2$ & lower & known upper & new upper \\ \hline\hline
 $n=4$ & 27 & 27 & 27 \\ \hline
 $n=5$ & 114 & 122 & 122 \\ \hline
 $n=6$ & 468 & 519 & 517 \\ \hline
 $n=7$ & 1895 & 2139 & 2125 \\ \hline
 $n=8$ & 7625 & 8683 & 8627 \\ \hline
 $n=9$ & 30590 & 34987 & 34762
\end{tabular}
\end{minipage}
\hspace{0.05\textwidth}
\begin{minipage}{0.45\textwidth}
\centering
\begin{tabular}{c||c|c|c}
 $q=3$ & lower & known upper & new upper \\ \hline\hline
 $n=4$ & 103 & 103 & 103 \\ \hline
 $n=5$ & 938 & 966 & 966 \\ \hline
 $n=6$ & 8474 & 8815 & 8812 \\ \hline
 $n=7$ & 76360 & 79699 & 79660 \\ \hline
 $n=8$ & 687520 & 718384 & 718033 \\ \hline
 $n=9$ & 6188519 & 6468736 & 6465576
\end{tabular}
\end{minipage}

\caption{The previously known bounds on  $f(n,q)=\cT_q(n+1,3,2)=\cC_q(n+1,n-1,n-2)$  (c.f. \cite{Etz1}) and our new results. 
In general, for $n\ge 6$, the leading term in the series expansion of the difference between the known and new upper bound (as a function of $q$) is $1\cdot q^{2n-11}$.}
\label{tab:1}
\end{table}

Throughout the paper, our main focus was to give bounds on $(2,1)$-blocking sets. However, the main idea of the recursive refinement presents an approach that can improve general cases as well, albeit it would be more involved. This is the reason we did not want to elaborate on it.
Many statements of the paper hold in the general case which could be the starting point of a more general argument, c.f. \autoref{increment}, \autoref{partial}, especially for $t=s-1$.

The paper discussed a new bound on $q$-covering and $q$-Turán designs, which raised increasing interest
 due to their connections with constant-dimension codes. In particular, a
$q$-Steiner design is an optimal constant-dimension code.
 Metsch \cite{Metsch3} conjectured that no such designs $S_q[n,k,r]$ exist, provided that 
$k > r > 1$. However, this turned out to be false, as Braun, Etzion, Östergård, Vardy, and Wassermann showed the existence of 
 $S_2[13, 3, 2]$ $2$-Steiner designs \cite{Braun}.

Very recently,  Keevash,  Sah and Sawhney proved the existence of subspace designs \cite{Keevash} with any given parameters, provided that the dimension of the underlying space is sufficiently large in terms of the other parameters of the design and satisfies the obvious necessary divisibility conditions,  which settled the corresponding open problem from the 1970s.
In particular, for $s> t\geq 1$ fixed integers and $q$ fixed prime power, they showed that if $n$ attains a certain threshold, then once $\qbinom{k-i}{r-i}{q} \bigm\vert  \qbinom{n-i}{r-i}{q}$ holds for all $0\leq i\leq r-1$, a subspace design $S_q[n,k,r]$ exists. However, this cannot be applied in our case, since from the dual of the $(2,1)$-blocking sets in $\PG(n,q)$, the corresponding parameter set of $\cC_q[n+1,n-1,n-2]$, namely $k=n-1$, $r=n-2$ will not be independent of $n$.

We mention the connection point between maximum rank distance codes (MRD codes) and covering designs. This has been discussed by Pavese \cite{Pavese} who used lifted MRD codes to improve bounds on $\mathcal{C}_q[2n,4, 3]$ and on $\mathcal{C}_q[3n+~8, 4,2]$.

At last, we reiterate  our main open problem: to determine the limit of the density $\lim_{n\to\infty}\density(n,q)$.

{\footnotesize

}


\begin{thebibliography}{99}
 
 \bibitem{Segre} André, J. (1954), Über nicht-Desarguessche Ebenen mit transitiver Translationsgruppe, Mathematische Zeitschrift, 60, 156–186.
 
\bibitem{beutel} Beutelspacher, A.,   Ueberberg, J. (1991).  A Characteristic Property of Geometric  $t$-Spreads in Finite Projective Spaces. Europ. J.  Comb, \textbf{12}(4),  277-281.

\bibitem{BSZ-survey} Blokhuis, A., Sziklai, P.,  Szőnyi, T. (2011). Blocking sets in projective spaces. Current research topics in Galois geometry, 61-84.

\bibitem{BB}
 Bose, R. C.  Burton, R. C. (1966). A characterization of flat spaces in a finite geometry and the uniqueness of the Hamming and the MacDonald codes,  J. Comb. Th. \textbf{1} 96-104.
 
\bibitem{Braun}
Braun, M., Etzion, T., Östergård, P. R., Vardy, A.,  Wassermann, A. (2016, January). Existence of $q$-analogs of Steiner systems. In Forum of Mathematics, Pi (Vol. 4, p. e7). Cambridge University Press.



 \bibitem{Cameron} Cameron, P. J. (1992). Projective and Polar Spaces. University of London, Queen Mary and Westfield College.

\bibitem{Patkos} Chaudhury, A.  Patk\'os, B.  (2010). Shadows and Intersections in Vector Spaces, { Journal of Combin. Theory, Ser.  A} \textbf{117} 1095-1106.

\bibitem{Coss} Cossidente, A.,  Pavese, F. (2016). A Note on $q$‐Covering Designs in $\PG(5, q)$. Journal of Combinatorial Designs, 24(8), 383-388.

\bibitem{Csajbok}
Csajbók, B., Marino, G., Polverino, O.,  Zullo, F. (2017). Maximum scattered linear sets and MRD-codes. Journal of Algebraic Combinatorics, 46, 517-531.

 \bibitem{Zanella} Dentice, E. F.,  Zanella, C. (2009). Bose–Burton type theorems for finite Grassmannians. Discrete Mathematics, 309(2), 363-370.
 
\bibitem{eis}  Eisfeld, J.,   Metsch, K. (1997).  Blocking $s$-dimensional subspaces by lines in $\PG(2s, q)$, Combinatorica, 17(2),   151-162.


\bibitem{Etzion} Etzion, T. (2013). Problems on $q$-analogs in coding theory. arXiv preprint arXiv:1305.6126.


 \bibitem{Etz1} Etzion, T. (2014). Covering of subspaces by subspaces. Designs, codes and cryptography, 72(2), 405-421.


 \bibitem{Etz3}  Etzion, T., Kurz, S., Otal, K.,  Özbudak, F. (2020). Subspace packings: constructions and bounds. Designs, Codes and Cryptography, 88(9), 1781-1810.
  \bibitem{Etz2}  
Etzion, T., Vardy, A. (2011). On $ q $-analogs of Steiner systems and covering designs. Advances in Mathematics of Communications, 5(2), 161-176.

\bibitem{Ozbudak}
Etzion, T., Kurz, S., Otal, K.,  Özbudak, F. (2020). Subspace packings: constructions and bounds. Designs, Codes and Cryptography, 88(9), 1781-1810.

 \bibitem{Gabi}  Gabidulin, E. M. (1985). Theory of codes with maximum rank distance. Problemy peredachi informatsii, 21(1), 3-16.

 \bibitem{hirs} Hirschfeld, J. W. P. Projective geometries over finite fields, Clarendon Press, Oxford, 1999.

\bibitem{nemetz}
Katona, G., Nemetz, T.,  Simonovits, M. (1964). On a graph problem of Turán. Mat. Lapok, 15(1–3), 228-238.


\bibitem{Keevash}
Keevash, P., Sah, A.,  Sawhney, M. (2022). The existence of subspace designs. arXiv preprint arXiv:2212.00870.
 
\bibitem{KK} Koetter, R.,  Kschischang, F. R. (2008). Coding for errors and erasures in random network coding. IEEE Transactions on Information theory, 54(8), 3579-3591.

\bibitem{Metsch} Metsch, K. (2004). Blocking subspaces by lines in $\PG(n,q)$, { Combinatorica} \textbf{24} 459-486.



 \bibitem{Metsch2} 
 Metsch, K. (2003). Blocking sets in projective spaces and polar spaces. Journal of Geometry, 76(1), 216-232.
\bibitem{Metsch3} Metsch, K. (1999). Bose-Burton type theorems for finite projective, affine and polar spaces, Surveys in combinatorics,  (Canterbury), London Math. Soc. Lecture Note Ser., vol. 267, Cambridge Univ. Press, Cambridge,
 137-166.




\bibitem{Pavese} Pavese, F. (2020). On $ q $-covering designs. The Electronic Journal of Combinatorics, P1-35.
 


 \end{thebibliography}
\end{document}